\documentclass{amsart}

\usepackage[utf8]{inputenc}     
\usepackage{mathtools}
\usepackage{amsfonts}
\usepackage{amsthm}
\usepackage{amssymb}
\usepackage{amsmath}
\usepackage{tikz}
\usetikzlibrary{arrows,automata}
\usepackage{tikz-cd}
\usepackage[textcolor={black},textsize=tiny]{todonotes}
\usepackage{xcolor}

\usepackage{bm}
\usetikzlibrary{decorations.pathreplacing, positioning}

\usepackage{graphicx}
\usepackage{subcaption}

\newtheorem{theorem}{Theorem}

\newtheorem{lemma}[theorem]{Lemma}
\newtheorem{proposition}[theorem]{Proposition}

\newtheorem{corollary}[theorem]{Corollary}
\newtheorem*{notation*}{Notation}
\newtheorem*{remark*}{Remark}

\theoremstyle{definition}
\newtheorem{example}[theorem]{Example}
\newtheorem{definition}[theorem]{Definition}

\newcommand{\ord}[0]{\operatorname{ord}}        
\newcommand{\uval}[0]{ν_U}

\newcommand{\unor}[0]{\operatorname{nor}_U}

\newcommand{\nor}[0]{\operatorname{nor}}
\newcommand{\gtwo}{\widetilde{𝔾}}
\newcommand{\gfin}{\widetilde{𝔾}_{\text{fin}}}
\newcommand{\radix}{\cdot}

\newcommand{\gmean}[0]{\varphi}
\newcommand{\hmean}[0]{\overline{\varphi}}

\newcommand{\uspace}[0]{\mathbb{Z}_{U}}
\newcommand{\rval}[0]{\tilde{\Phi}}
\newcommand{\tval}[0]{\Phi}
\newcommand{\alphavec}[0]{{\bm\alpha}}
\newcommand{\alphanorm}[0]{{\gamma}}
\newcommand{\onevec}[0]{{\bm1}}

\newcommand{\ltlex}[0]{<_{\text{lex}}}

\newcommand{\period}[1]{\overbracket[0.7pt][1pt]{#1}}

\DeclareUnicodeCharacter{D7}{\times}            
\DeclareUnicodeCharacter{394}{\Delta}           
\DeclareUnicodeCharacter{3B1}{\alpha}           
\DeclareUnicodeCharacter{3B2}{\beta}            
\DeclareUnicodeCharacter{3B4}{\delta}           
\DeclareUnicodeCharacter{3B5}{\varepsilon}      
\DeclareUnicodeCharacter{3B7}{\eta}             
\DeclareUnicodeCharacter{3BB}{\lambda}          
\DeclareUnicodeCharacter{3BD}{\nu}              
\DeclareUnicodeCharacter{3C0}{\pi}              
\DeclareUnicodeCharacter{3C1}{\rho}             
\DeclareUnicodeCharacter{3C3}{\sigma}           
\DeclareUnicodeCharacter{2026}{\ldots}          
\DeclareUnicodeCharacter{2102}{\mathbb{C}}      
\DeclareUnicodeCharacter{210D}{\mathbb{H}}      
\DeclareUnicodeCharacter{2113}{\ell}            
\DeclareUnicodeCharacter{2115}{\mathbb{N}}      
\DeclareUnicodeCharacter{211A}{\mathbb{Q}}      
\DeclareUnicodeCharacter{211D}{\mathbb{R}}      
\DeclareUnicodeCharacter{2124}{\mathbb{Z}}      
\DeclareUnicodeCharacter{2192}{\rightarrow}     
\DeclareUnicodeCharacter{21A6}{\mapsto}         
\DeclareUnicodeCharacter{21BE}{\upharpoonright} 
\DeclareUnicodeCharacter{2200}{\forall}         
\DeclareUnicodeCharacter{2203}{\exists}         
\DeclareUnicodeCharacter{2205}{\varnothing}     
\DeclareUnicodeCharacter{2208}{\in}             
\DeclareUnicodeCharacter{2209}{\notin}          
\DeclareUnicodeCharacter{2211}{\sum}            
\DeclareUnicodeCharacter{2216}{\setminus}       
\DeclareUnicodeCharacter{2218}{\circ}           
\DeclareUnicodeCharacter{221E}{\infty}          
\DeclareUnicodeCharacter{2227}{\wedge}          
\DeclareUnicodeCharacter{2229}{\cap}            
\DeclareUnicodeCharacter{222A}{\cup}            
\DeclareUnicodeCharacter{223C}{\sim}            
\DeclareUnicodeCharacter{2245}{\cong} 		
\DeclareUnicodeCharacter{2254}{\vcentcolon=}    
\DeclareUnicodeCharacter{2260}{\neq}            
\DeclareUnicodeCharacter{2261}{\equiv}          
\DeclareUnicodeCharacter{2282}{\subset}         
\DeclareUnicodeCharacter{2286}{\subseteq}       
\DeclareUnicodeCharacter{22C3}{\bigcup}         
\DeclareUnicodeCharacter{22C5}{\cdot}           
\DeclareUnicodeCharacter{22EF}{\cdots}          
\DeclareUnicodeCharacter{2308}{\lceil}          
\DeclareUnicodeCharacter{2309}{\rceil}          
\DeclareUnicodeCharacter{230A}{\lfloor}         
\DeclareUnicodeCharacter{230B}{\rfloor}         
\DeclareUnicodeCharacter{27E8}{\langle}         
\DeclareUnicodeCharacter{27E9}{\rangle}         
\DeclareUnicodeCharacter{2A7D}{\leqslant}       
\DeclareUnicodeCharacter{2A7E}{\geqslant}       
\DeclareUnicodeCharacter{1D53E}{\mathbb{G}}     
\DeclareUnicodeCharacter{1D543}{\mathbb{L}} 	
\DeclareUnicodeCharacter{1D54B}{\mathbb{T}}     
\newcommand{\fin}[0]{\operatorname{Fin}}

\begin{document}
\title[From some  Pisot numerations to topological groups]{From some  Pisot numerations\\ to topological groups}
\thanks{The first author was supported by the Agence Nationale de la
  Recherche through the project ``SymDynAr'' (ANR-23-CE40-0024-01).  }
\author[O.~Carton]{Olivier Carton}
\address{Institut universitaire de France et IRIF, Université Paris-Cité, France}
\author[J.~Sudbery]{Jake Sudbery}
\address{Department of Statistical Science, University College London, UK }
\author[R. Yassawi]{Reem Yassawi}
\address{School of Mathematical Sciences, Queen Mary University of London, UK}
\email{Olivier.Carton@irif.fr}
\email{jake.sudbery.23@ucl.ac.uk}
\email{r.yassawi@qmul.ac.uk}
\thanks{}
\date{\today}
\keywords{Pisot numerations, topological groups, automata sequences, substitution systems}
\subjclass[2020]{37B10, 11K16, 37B52, 68Q45}
\maketitle

\date{\today}
\maketitle

\begin{abstract}
  A Pisot numeration system~$U$ for~$ℕ$ is a sequence of natural numbers
  generated by an integral homogeneous linear recurrence whose
  characteristic polynomial is the minimal polynomial of a Pisot number.
  The purpose of this paper is to introduce the analogue of the group of
  $p$-adic integers for such numerations when they \emph{preserve zeros},
  which is equivalent to the ``Condition F'' introduced by Frougny and
  Solomyak for $β$-numerations. We show that these topological groups~$ℤ_U$
  project homomorphically onto a torus. Equipping $ℤ_U$ with the
  appropriate topology, we also show that if $U$ is unimodular, then $ℤ_U$
  is continuously isomorphic to a torus.
\end{abstract}

\section{Introduction}

The additive group $ℤ_p$ of $p$-adic integers can be constructed as the
inverse limit of the finite groups~$ℤ/p^nℤ$, but it can also be constructed
as a topological completion of~$ℕ$.  This completion is achieved through an
ultrametric which is based on the base-$p$ expansions of natural integers.
The purpose of this article is to construct the analogue of this group for
Pisot numerations.  Recall that a Pisot numeration replaces the powers
of~$p$, used in base-$p$ expansions, by a sequence of integers satisfying a
linear recurrence defined by the minimal polynomial of a Pisot number,
i.e., an algebraic integer all of whose conjugates lie in the interior of
the unit disk.  The iconic, and simplest, numeration of this type is the
Zeckendorf numeration, which uses the sequence of Fibonacci numbers,
associated to the golden mean.  In such a numeration each natural number
has a unique \emph{normalised} expansion, i.e., the one which is obtained
using the greedy algorithm; see Section~\ref{sec:Pisot-numeration} for
definitions and details.

In this article we introduce, for each Pisot numeration~$U$ satisfying a
mild condition, a group~$\uspace$ which is the analogue of~$ℤ_p$.  It can
be viewed as a topological completion of the set of normalised
$U$-expansions of the natural numbers, as each element of~$\uspace$ can be
represented by an infinite normalised sequence of digits.  Our construction
of~$\uspace$ does not follow the same strategy as for~$ℤ_p$ for the
following reason.  When two base-$p$ expansions are added, the carries
propagate to the left.  This is reflected in the fact that the $p$-adic
valuation~$ν_p$ satisfies the classical inequality
$ν_p(x+y) ⩾ \min(ν_p(x), ν_p(y))$.  This property no longer holds for Pisot
numerations: carries may propagate both to the left and to the right, as is
already the case for the Zeckendorf numeration.  For this reason, our
construction is based on a \emph{pseudo-valuation}~$ν_U$ which satisfies
the weaker inequality $ν_U(x+y) ⩾ \min(ν_U(x), ν_U(y))-K$ for some
constant~$K$; we say that numerations that satisfy this \emph{preserve
  zeros}.  We also equip our group~$\uspace$ with a topology, but as a
result of the nonstandard propagation of the carries, $\uspace$ is not a
profinite group with a zero-dimensional topology.

Our main result is that for a given Pisot numeration, the group that
we construct is also topologically an $n$-torus~$𝕋$, in the presence of
mild hypotheses.
\begin{theorem}
  Let $U = (u_n)_{n⩾0}$ be a numeration associated with a Pisot number~$β$
  of degree~$d$, with standard initial conditions. If $U$ preserves zeros,
  then there is a continuous group epimorphism $\tval: \uspace → 𝕋^{d-1}$.
  If $β$ is unimodular, then $\tval$ is an isomorphism.
\end{theorem}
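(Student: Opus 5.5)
The plan is to build $\tval$ from the contracting (Galois-conjugate) part of the recurrence, in the spirit of Rauzy fractals and the Dumont--Thomas/Sirvent construction.

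Let $β=β_1$ and let $β_2,\dots,β_d$ be its conjugates, all with $|β_i|<1$. Write $u_n=\sum_{i=1}^d c_iβ_i^n$. Standard initial conditions ensure $c_1\neq 0$.

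\textbf{Step 1: definition on finite sequences.} For $x=\sum_k x_k u_k$ with normalised digits, set
\[
\rval(x)=\Bigl(\sum_k x_k u_k\,\alpha_j\Bigr)_{j}\bmod ℤ^{d-1},
\]
with real numbers $\alpha_j$ still to be chosen. A cleaner choice is to realise $𝕋^{d-1}$ as $ℝ^{d-1}/Λ$. Here $ℝ^{d-1}$ is the contracting space $\bigoplus_{i\ge2}$ (real and complex parts pooled), and $Λ$ is the image of $ℤ^d$ under the projection $π$ along the expanding eigendirection of the companion matrix $M$. Put $v_k=π(M^k e)$, with $e$ the initial vector $(u_0,\dots,u_{d-1})$. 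Then define $\rval(x)=\sum_k x_kv_k \bmod Λ$.

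This map is additive on $ℕ$ for trivial reasons. The integer $x$ corresponds to the integer vector $\sum x_k M^k e$. Any normalised expansion of $x$ gives the same class mod $Λ$, because $\sum x_kM^ke$ is determined by $x$ modulo $\ker$ of the functional $w\mapsto$ (first coordinate). I would need to check this using the fact that the integer $\sum x_k u_k$ together with the lattice determines the class. The honest route is to define $\tval(x)=x\cdot \mathbf{w} \bmod ℤ^{d-1}$ for a fixed vector $\mathbf w\in ℝ^{d-1}$. This makes $\tval$ a homomorphism $ℤ\to𝕋^{d-1}$ automatically. The content is then to choose $\mathbf w$ so that $\tval(u_k)\to0$. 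Take $\mathbf w$ given by the left eigenvector data: $u_k\mathbf w\equiv\sum_{i\ge2}c_iβ_i^k(\cdot)$ mod $ℤ^{d-1}$. This uses the fact that $\sum_i c_iβ_i^k$ is an integer, so the $β_1$-part equals minus the small part modulo integers. Concretely, $\mathbf w$ has coordinates $γ_j$, where $1,γ_1,\dots,γ_{d-1}$ form a basis of $ℤ[β^{-1}]$-related numbers such that $\|u_kγ_j\|\le C\max_{i\ge2}|β_i|^k$. This is Pisot's classical estimate.

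\textbf{Step 2: continuity and extension.} Using the pseudo-valuation $\uval$, if $\uval(x-y)\ge n$ then $x-y=\sum_{k\ge n}z_ku_k$ with bounded digits. Hence $\|\tval(x)-\tval(y)\|\le C'\sum_{k\ge n}θ^k$ with $θ=\max_{i\ge2}|β_i|<1$. So $\tval$ is uniformly continuous on $ℕ$ for the metric defining $\uspace$, and extends to $\uspace$. The extension is a homomorphism because addition on $\uspace$ is continuous. This is where preservation of zeros, via $\uval(x+y)\ge\min-K$, enters: it gives the group structure and the continuity of the operations. Those facts are established earlier in the paper, and I will use them.

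\textbf{Step 3: surjectivity.} The image of $ℕ$ is $\{x\mathbf w\}$. Irrationality of $β$ gives that $1,γ_1,\dots,γ_{d-1}$ are $ℚ$-independent, so by Kronecker this set is dense in $𝕋^{d-1}$. The image of the compact space $\uspace$ is compact, hence closed, hence everything.

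\textbf{Step 4: injectivity in the unimodular case.} This is the main obstacle. Suppose $\tval(z)=0$ for $z\in\uspace$ with expansion $(z_k)$. The tail sums give the points $\sum_{k<n}z_ku_k\mathbf w$ mod $ℤ^{d-1}$. I would lift to $ℝ^{d-1}$ via the contracting coordinates, obtaining $\sum_k z_k v_k$ with $v_k=θ$-contracted vectors. Unimodularity makes $M$ invertible over $ℤ$, so $Λ$ is a genuine lattice invariant under the contraction $M|_{E^s}$. The condition $\tval(z)=0$ then says $\sum_{k\ge0}z_kv_k\in Λ$. Applying $M^{-n}$ and shifting, I would show that the shifted tails $\sum_{k\ge n}z_k v_{k-n}$ lie in a bounded set intersected with $Λ$ plus contributions from integers. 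Finiteness of this set yields eventual periodicity and then, by uniqueness of normalised expansions, that all $z_k=0$. Equivalently, I would prove that $\uspace$, being compact, maps bijectively once we show that the fibre over $0$ is trivial, using that in the unimodular case the Rauzy-type tiling by $Λ$-translates is a tiling with measure-disjoint pieces. The step where I expect difficulty is handling points on tile boundaries. For those I would use the zero-preserving normalisation to show that two distinct normalised sequences with equal image differ by carries that force an element of $ℕ$ to have two normalised expansions, which is a contradiction. A continuous bijection from compact $\uspace$ to Hausdorff $𝕋^{d-1}$ is then a homeomorphism.
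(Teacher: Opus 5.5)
Your Steps~1--3 are, once repaired, the paper's argument in coordinates. By Proposition~\ref{prop:restack_for_g}, which is where standard initial conditions enter, a finite $g$ satisfies $\rval(g)\equiv[g]_U\,\alphavec^{d-1}\pmod{\mathbb{L}}$. So on $\mathbb{N}$ the paper's $\tval$ is your $x\mapsto x\mathbf{w}$, where $\mathbf{w}=(c_0,\dots,c_{d-2})$ are the coordinates of $\alphavec^{d-1}$ in the basis $\onevec,\alphavec,\dots,\alphavec^{d-2}$ of $\mathbb{L}$. The $c_i$ are, up to sign, the lower coefficients of $P(X)/(X-\beta)$. The $\mathbb{Q}$-independence of $1,c_0,\dots,c_{d-2}$ comes from $c_i$ having exact degree $d-1-i$ in $\beta$, not from mere irrationality of $\beta$.

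Two repairs are needed in these steps. First, your $\Lambda=\pi(\mathbb{Z}^d)$ is a rank-$d$ subgroup of a $(d-1)$-dimensional space, so it is not discrete. In fact no lattice is invariant under an invertible contraction, so your Step~4 claim about $\Lambda$ is self-contradictory; the correct lattice is $\mathbb{L}$, of rank $d-1$. Second, $\uspace$ is not a $\uval$-completion of $\mathbb{N}$: it carries the quotient of the topology of $\|g\|_U=\sum_n|g_n|\alphanorm^n$ on $\mathbb{G}$. So instead of extending by uniform continuity, define the map directly on $\mathbb{G}$ by $g\mapsto\sum_n g_n\alphavec^{n+d-1}$; each term is an exponentially small representative of $u_n\alphavec^{d-1}$ modulo $\mathbb{L}$. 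This map is $\rval$, it is Lipschitz for $\|\cdot\|_U$, and it sends $\mathbb{H}_U$ into $\mathbb{L}$ as in Proposition~\ref{pro:kernel}.

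The genuine gap is Step~4, injectivity. Your chain (bounded set meets lattice finitely $\Rightarrow$ eventual periodicity $\Rightarrow$ $z=0$ by uniqueness of normalised expansions) fails at each link:
\begin{itemize}
\item The points you want to count lie in the projection of $\mathbb{Z}[\beta]$ to the contracting space, which is not discrete. Finiteness, as in Lemma~\ref{lem:boundedset} (where unimodularity is used), also requires a bound on the $\beta$-coordinate.
\item Finitely many values do not force periodicity, and periodicity does not force vanishing.
\item Infinite normalised sequences are \emph{not} unique in their class (Theorem~\ref{thm:single}). For the Fibonacci numeration, $\period{10}$ and $\period{10}0$ are distinct normalised representatives of $-1$.
\end{itemize}
Your tiling fallback imports an external theorem and leaves exactly the boundary points to an unspecified carry argument.

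What is missing is a uniform quantitative separation valid for all finite normalised $g$. Take $\lambda=\min_i|\alpha_i|$ and $K,N$ from Lemmas~\ref{lem:boundedphi} and~\ref{lem:unormbnorm-take2}. One needs $\|\rval(g)-\mathbf{p}\|_\infty\ge K\lambda^{N+d}$ for every $\mathbf{p}\in\mathbb{L}\setminus\{\mathbf{0}\}$ (Hyp.~2), and $\max_i|[g]_{\alpha_i}|\ge K\lambda^{\ord(g)+N}$ for $g\neq 0$ (Hyp.~1). These bounds pass to limits of truncations, which handles infinite normalised sequences and boundary points at once.

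Proving them is where Condition~F is indispensable; it is equivalent to zero preservation by Proposition~\ref{prop:zero-preserving-then-F}. One $\beta$-normalises $\sigma^{d-1}(g)+g'$, with $g'$ the integer coordinates of $-\mathbf{p}$, to a \emph{finite} expansion. One then shows its order is $<N+d$, using Lemma~\ref{lem:unormbnorm-take2} and the identity $A^{(g)}_{d-1}=[g]_U$ (Lemma~\ref{lem:boundedval}). Finally one applies Lemma~\ref{lem:boundedphi}. In your Step~4, zero preservation appears only in the vague boundary remark, which is a sign that this ingredient is absent.
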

The torus $𝕋^{d-1}$ is realised as $(ℂ^k× ℝ^ℓ)/𝕃$, where the Pisot number
has $2k$ complex and $ℓ$ real conjugates, and where $𝕃$ is a regular
lattice, defined in Section~\ref {sec:IsomorphisWithTorus}.  When defining
a numeration~$U$, one degree of freedom is the choice of  initial
conditions. We found that the right choice of initial conditions, which we
call \emph{standard}, is relevant. By taking other initial conditions
we can create coverings of the space instead of tilings, or indeed, small
initial conditions so that the image of~$\rval$ is not surjective; see
Example~\ref{ex:initial-conditions}. 
  
Much of the novelty in our result is the development of the topological
group~$\uspace$.  We start with the group $𝔾$ of bounded integer-valued
sequences, equipped with componentwise addition, and define a kind of
valuation. Through this we define a subgroup $ℍ_U$ of expansions of~$0$,
and we set $\uspace = 𝔾/ℍ_U$; details are in
Section~\ref{sec:U-adic-integers}. The definition of~$\tval$ is then
 is adapted from what already exists in the tiling and substitutional literature,
as we summarise below, although our map is in addition now a group
homomorphism.

A Pisot numeration comes with its associated Pisot number~$β$, and the
study of $β$-numerations is well-established. We show in
Section~\ref{sec:condition-F} that a Pisot numeration preserves zeros if
and only if the associated $β$-numeration satisfies \emph{Condition~F},
introduced by Frougny and Solomyak in \cite{Frougny-Solomyak-1992}.
Frougny and Solomyak identified families of Pisot numbers that satisfy
Condition~F, and Akiyama \cite{Akiyama-1998} characterised the
unimodular~$β$ which satisfy it, and also gave an algorithm which decides
whether or not a given Pisot number satisfies Condition~F. This then allows us to
decide when a numeration preserves zeros.

The closest result in the literature which compares to ours, but in the
$\beta$-numeration setting, is that of Vershik~\cite{Vershik-1991}. There
he considers $\beta$-expansions over powers of the golden ratio. He
constructs a completion of the set of those numbers, and shows, using
character theory, that this completion is isomorphic to the field of real
numbers, but ``with an unusual embedding of the set of powers of the golden
mean''.  Vershik also writes that one can approach this via the ``Fibonacci
decomposition of naturals into a sum of Fibonacci numbers (Zeckendorf
theorem)...but it does not explain the algebraic structure of this
construction.''  Here we take this approach and we make explicit the
algebraic structure of this construction.

There is also related work on characterising when an odometer defined by a
numeration admits addition by 1 as a continuous map. In
\cite{Grabner-Liardet-Tichy}, Grabner, Liardet and Tichy define a
\emph{G-adic} space, where $G = (G_n)_{n ⩾ 0}$ is a strictly increasing
sequence of positive integers, and study the addition of 1 in this space,
characterising $G$ for which this map is continuous. No group structure is
defined for these odometers.  The authors also study the case when $G$ is
obtained by a linear recurrence, and show that the adic system is
measure-theoretically isomorphic to a group rotation given a certain
\emph{Hypothesis B}. They ask whether this hypothesis is equivalent to
Condition~F, which would make this result equivalent to that of Solomyak
\cite{Solomyak-1992}.

Several authors focus on extending the Zeckendorf numeration to the
integers, or to a group. In~\cite{Rittaud-Vivier-2012}, Rittaud and Vivier
define a topological group  specifically for this numeration, by defining a valuation, and then a group which turns out to be  identical to ours in that particular case.   In other recent work,
connections are made between Zeckendorf numerations for $\mathbb{Z}^2$ and
Wang tiles. In~\cite{Labbe-Lepsova-2021}, Labbé and Lepšová show the
existence of a Wang tiling whose $(m,n)$-th entry is the output of an
automaton when the input is a representation of $(m,n)$ in this
numeration. In~\cite{Labbe-Lepsova-2023}, they show that this numeration is
the Zeckendorf analogue of the two's complement numeration system, which
gives a base-2 expansion for negative integers. They also generalise this
to a two-sided approach of Dumont-Thomas numerations
in~\cite{Labbe-Lepsova-2024}.
  
Our topological groups have familiar geometric representations, as
well-known tiles that come from substitutions (see below) and which are
fundamental domains of Euclidean space; see our examples. In
\cite{Akiyama-1997}, Akiyama also studied tilings generated by
$β$-numerations satisfying Condition~F. He writes that his work treats
these tilings in the context of numerations and not substitutions, as this
method gives a clearer understanding of the universal phenomena of these
tilings.  That these tilings already come from the underlying numeration of
the substitution is pertinent.  One difference between $U$ and a
$β$-numeration is that a $β$-expansion has both an integral and a
fractional part, the latter to the right of the radix point, the former to
the left. To generate his tilings, Akiyama considers valid finite
expansions, and maps them in a now standard way, as done by Rauzy
in~\cite{Rauzy-1982}, to a Euclidean space. These techniques, or some mild
modification, are a widely-used method to obtain these fractals in Euclidean
space, see, e.g., \cite{Canterini-Siegel-TAMS}. Indeed this is the technique
we also use to map $ℤ_U$ to the torus.  The integral part of the
expansion corresponds to a translation, while the fractional part generates
one tile. Because Akiyama considers these two parts in tandem, and because
different integral parts have different fractional continuations, he
obtains a tiling of the space using finitely many tiles, whereas we only
use one.

We also emphasise that our results concern numerations and not substitutions.
Nevertheless we can transfer our results to a family of substitutions,
whose fixed point has a shift-orbit that can be coded in an ordered way
using the orbit of addition by 1 in $ℤ_U$. These are the
\emph{$β$-substitutions}.  In \cite{Solomyak-1992}, Solomyak showed
that $β$-substitution shifts where $β$ satisfies the Condition~F
have discrete spectrum; this was extended more recently by Barge to all
$β$-substitutions \cite{Barge-2018}. We can use our setting to deduce
that these shifts are almost automorphic; this will be done in future
work. There are several works linking numerations to substitutions, mainly
via their Bratteli-Vershik representations as generalised odometers or the
Dumont-Thomas numeration system.  Some references are \cite{Rauzy-1982,
  Canterini-Siegel-TAMS, BBLT-2006, Thuswaldner-2006, Surer-2020}. This
list is far from exhaustive, and many works use some variation of Rauzy's
coding.

We mention other work where this kind of coding of a $\beta$-shift to a
torus is used. In~\cite{Vershik-1991}, Vershik also studied the Fibonacci
hyperbolic toral automorphism, and uses the same coding techniques as above
to show that this automorphism is arithmetically isomorphic to a
$\beta$-shift; this approach is generalised in \cite{Sidorov-Vershik-1998}.
Sidorov and Vershik work with $\beta$-numerations and Pisot numbers of
degree~2, characterising when one can establish arithmetic codings from a
two-sided $β$-shift of expansions of positive real numbers to a
hyperbolic toral automorphism.  Ex post facto, they can
impose a group structure by identifying points in shift space via the
arithmetic coding. Sidorov~\cite{Sidorov-2003} generalises this to higher
degree polynomials. There is also related work by Schmidt in
\cite{Schmidt-2000}.  Our work is transversal to this, in the sense that
instead of multiplication, the underlying dynamics is addition.

While the groups~$\uspace$ that we introduce are the analogue for Pisot
numerations of the additive group structure of~$ℤ_p$, the multiplicative
structure of~$ℤ_p$ is ignored. So far it seems difficult to turn our groups
into rings.  The usual multiplication of integers is not compatible with
the topology we use in the sense that it does not preserve zeros.  Knuth
introduced in \cite{Knuth-1988} another multiplication on integers based on
the Zeckendorf numeration, but it is not distributive with respect to
addition.

In Section~\ref{sec:preliminaries}, we set up notation. In
Section~\ref{sec:U-adic-integers}, we define zero preservation, and the
$U$-adic integers, and show that their elements are infinite normalised
sequences for~$U$.  Once we have established $\uspace$, we introduce a
topology and link it to a torus in Section~\ref{sec:IsomorphisWithTorus},
using classical techniques that are used in many of the above cited
articles. We also show that in the case where $U$ is unimodular, $\uspace$
is continuously group isomorphic to a torus. In
Section~\ref{sec:condition-F}, we show that the condition that $U=U_β$
preserves zeros is equivalent to the condition that the $β$-numeration
satisfy Condition~F.  Throughout we are careful to isolate the conditions
that we need.

\section{Preliminaries}\label{sec:preliminaries}
\subsection{Notation}

In this paper, we consider sequences $g=(g_n)_{n⩾ 0}$ over an alphabet
contained in $ℤ$ and containing 0.  If $0 ⩽ m ⩽ n$ are two non-negative
integers and $g$ is a sequence then define $g_{[m:n]} ≔ (g_k)_{k=m}^n$ and
$g_{[m:n)}≔ (g_k)_{k=m}^{n-1}$.  These are called \emph{factors} of $g$.
For any $n ⩾ 0$, $g_{[0:n]}$ is a \emph{head} of $g$ and $g_{[n:∞)}$ is a
\emph{tail} of~$g$.  A sequence is \emph{finite}, if it has a tail of
zeros, i.e. $g_n=0$ for all $n$ large.  We will consider both left-infinite
sequences $⋯ g_2g_1 g_0$ and right-infinite sequences $g_1g_2g_3⋯$.  We
will sometimes use the radix point ``$⋅ $'' to clarify the context, e.g.,
$g⋅$ represents a left-infinite sequence.  When we want to emphasise that
there is no tail of zeros, we call the sequence a \emph{word}. Let
$\period{w}$ denote a one-sided infinite sequence obtained by repeating the
word~$w$, so that, eg, a finite sequence is $\period{0}g_n⋯ g_0$.  As our
words will represent expansions of numbers, then, as in integral base
arithmetic, we will often identify a finite word~$w$ with any word
which has $w$ as a prefix, followed by any number of zeros, including an
infinite tail of zeros. An exception is when we concatenate words, as then
inserting zeros affects the resulting word.

Given $d ∈ ℤ$, let $\bar{d}$ denote the integer $-d$.  We also use the bar
to denote complex conjugates of numbers, but this latter notation will not
be confusing as the different usages are disjoint.  The set of finite
sequences on the alphabet~$D$ is denoted $D^*$. The \emph{length } of
$w=(d_i)_{i=1}^n$ is $n$ and is denoted $|w|$.

\subsection{Pisot numerations}\label{sec:Pisot-numeration}

We will work with numeration systems $U=(u_n)_{n⩾ 0}$ with $u_0 = 1$, where
$(u_n)_{n⩾ 0}$ is a sequence of natural numbers, and where each natural
number can be represented using finite sequences of \emph{digits} from a
finite set~$D ⊆ ℤ$, i.e., for each $n$ there is a natural number~$k$ and
$d_k,…, d_0$ from $D$ such that $n = ∑_{i=0}^k d_iu_i$. We use the
convention that if some $u_n=0$, then $d_n=0$.  In this paper, we only
consider numerations $U=(u_n)_{n⩾ 0}$ which satisfy a linear recurrence.

Furthermore, we only consider Pisot numerations, defined as follows.
Recall that $β>1$ is a Pisot number if it is algebraic over~$ℚ$ and if all
its Galois conjugates have modulus strictly less than one; for an
exposition see \cite{Parry-1960}.  Let
$P(X)= X^d-a_{d-1}X^{d-1}- ⋯ - a_1X-a_0$ be irreducible over~$ℚ$ and have a
leading root~$β$ which is Pisot.  We say that the sequence of natural
numbers $U = (u_n)_{n⩾0}$ is a \emph{Pisot} numeration system (of degree
$d$) if it is defined by initial conditions $u_0, u_{-1},…, u_{-d+1}$
and if for some $n ⩾ 1$ and integers $a_0, … ,a_{d-1}$,
\begin{displaymath}
  u_n = a_{d-1}u_{n-1}+ ⋯ +a_1u_{n-d+1}+a_0u_{n-d}.
\end{displaymath}
For convenience we give the initial conditions with non-positive indices.
In this paper, unless otherwise indicated, we will generate $(u_n)_{n⩾ 0}$
using the initial conditions
\begin{displaymath}
  u_{-1}=u_{-2}= ⋯ =u_{-d+1}=0 \text{ and } u_0=1.
\end{displaymath}
With these initial conditions we call the numeration \emph{standard}. Some
of our results are specific to these initial conditions; in places where
this is the case, we will indicate this explicitly. Since $P$ is
irreducible, it is also sometimes called the minimal polynomial for the
recurrence.  If we need to specify the leading root~$β$, we will say that
$U$ is associated to~$β$.  We will not require that the coefficients~$a_i$
are nonnegative, but we will require that the sequence $(u_n)_{n⩾ 0}$
generated by the recurrence be strictly positive.  For such Pisot
numerations, the sequence $(u_n)_{n ⩾ 0}$ is strictly increasing for large
$n$; see Lemma~\ref{lem:boundun}.

For these numeration systems, every natural number~$n$ has a
\emph{normalised} expansion, which is the greatest expansion for the
lexicographic order, and which is obtained using the greedy algorithm. Note
that for $U$-numerations, the least significant digit is the rightmost
digit, i.e., we write these normalised expansions extending to the left,
$(n)_U ≔ d_k ⋯ d_0$, where $d_k ≠ 0$ is the most significant digit
in the normalised expansion $(n)_U$ of $n ∈ ℕ$. Despite the
definition of~$(n)_U$, we are permitted to sometimes pad $(n)_U$ with
leading zeros, eg, when we have to compare the expansions of several
integers at a time.  Note that a finite sequence $d_k ⋯ d_0$ over~$ℕ$ is a
normalised expansion of a natural number if and only if
$∑_{i=0}^ℓd_iu_i< u_{ℓ+1}$ for each $ℓ ⩽ k$. We call such a finite sequence
\emph{$U$-normalised}.

Under the hypothesis that the ratio $u_{n+1}/u_n$ is bounded by a constant
for all $n$, which is the case for Pisot numerations, see, eg,
Lemma~\ref{lem:boundun}, the digits of the normalised expansion of any
integer are bounded and contained in a canonical digit set $D=D_U$.  This is
the minimal set of nonnegative digits satisfying $(n)_U ∈ D^*$ for each
$n ∈ ℕ$. 

Conversely, given a finite sequence $g=g_n ⋯ g_0$ on any finite digit set,
we use $[g]_U$ to denote the natural number that has $g$ as a (possibly
non-normalised) expansion in that system, i.e., $[g]_U = ∑_{i=0}^n
g_iu_i$. We will say that $g$ is an expansion of~$n$. Note that the map
$g ↦ [g]_U$ can be applied to any finite sequence over~$ℤ$.  If
$g =g_n⋯ g_0$, we will sometimes use the radix point and write $g\radix$ to
indicate the location of the least significant digit. The radix point is
not needed when the indexing of the digits in $g$ is explicit.  Thorough
descriptions of the properties of addition in these numeration systems can be
found in Frougny's exposition \cite[Chapter~7]{Lothaire}.

\begin{example}[The $k$-bonacci numerations]\label{ex:k-bonacci}
  The most well-known Pisot numerations are the $k$-bonacci numerations
  defined to be the numbers which satisfy the recurrence
  \begin{displaymath}
    u_n= u_{n-1} + u_{n-2} + ⋯ + u_{n-k}.
  \end{displaymath}
  The $k$-bonacci numbers satisfy the recurrence that has
  $X^k-X^{k-1}-⋯-X-1$ as minimal polynomial, which is known to be Pisot
  \cite{Brauer-1951}. Note that, with standard initial conditions, we have
  $u_1=u_0=1$, so the least significant digit for any $(n)_U$ equals~$0$.
  In the $k$-bonacci numeration, each natural number has a unique normalised
  expansion as $n= ∑_{i = 0}^kd_iu_i$ where $d_0=0$, $d_i ∈ \{0,1\}$  and
  $d_id_{i+1}⋯ d_{i+k-1}=0$ for~$i ⩾ 0$. If $k=2$, the numeration $U$ is
  the sequence of Fibonacci numbers $… 5,3,2,1,1\cdot 0$; this numeration
  is also known as the Zeckendorf, or Fibonacci numeration.  If $k=3$, the
  numeration $U = … 14,8,4,2,1,1\cdot 0,0$ is known as the Tribonacci
  numeration.  If $k=4$, $U$ is known as the Tetrabonacci numeration.
\end{example}

\subsubsection{Normalisation for Pisot numerations}

Thorough descriptions of the properties of addition in Pisot numeration
systems can be found in Frougny's exposition
\cite[Chapter~7]{Lothaire}. One important concept on which we base our work
is that of \emph{normalisation}. This consists of converting an arbitrary
expansion of an integer into the normalised expansion. For base-$k$
expansions, $k ∈ ℕ$, this simply translates to addition with carry. For
more exotic numerations, there is often no simple rule. Pisot numeration
systems are somewhat well behaved in this respect.

The following theorem is proved in \cite[Proposition~7.3.11,
Chapter~7]{Lothaire}.  Recall that a set of finite sequences~$K$ is
\emph{regular} if there is a deterministic automaton
$⟨ S,D, \Delta, \{s_0\}, F ⟩$ such that $Δ(s_0, g)$ is in an accepting
state if and only if $g ∈ K$, with $g$ possibly padded with leading zeros
if needed.

\begin{theorem}\label{thm:regularity-0}\cite[Chapter 7]{Lothaire}
  Let $U$ be a Pisot numeration system. For any finite set $B ⊂ ℤ$, the set
  $\{ g ∈ B^*: [g]_U = 0\}$ is regular.
\end{theorem}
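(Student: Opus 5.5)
The plan is the classical carry-automaton argument. Read a word $g = g_n \cdots g_0 \in B^*$ from the most significant digit to the least significant one, and keep track of a ``remainder'' vector. After reading the first $j$ digits $g_n \cdots g_{n-j+1}$, the natural state is the quantity
\[
  r_j \;=\; \sum_{i=n-j+1}^{n} g_i\, u_{i-(n-j+1)}
\]
placed in a suitable vector form. The condition $[g]_U = 0$ does not factor well when read left to right. So, instead, represent the partial sum as a linear combination of the shifted sequences $(u_{m+k})_{m}$, $k = 0, \dots, d-1$.

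Concretely, the state is a vector $s = (s_0, \dots, s_{d-1}) \in \mathbb{Z}^d$ meaning ``the value contributed so far equals $\sum_k s_k u_{m+k}$, where $m$ is the index of the next digit to be read.'' Reading digit $b$ at index $m$ adds $b u_m$. Shifting down one index uses the recurrence $u_{m+d} = \sum a_i u_{m+i}$: the value $\sum_k s_k u_{m+k}$ is rewritten as $\sum_k s'_k u_{m-1+k}$. Here $s'$ is obtained from $s$ by the companion-matrix step, namely $s' = M s$ with $M$ the transpose of the companion matrix of $P$. This uses only the linear recurrence; the standard initial conditions $u_{-1} = \dots = u_{-d+1} = 0$ then let the final state at $m = 0$ be evaluated as $s_0 u_0 + s_1 u_1 + \dots$, a fixed linear form $\lambda(s)$. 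So $[g]_U = \lambda(s_{\mathrm{final}})$, and the acceptance condition is $\lambda(s) = 0$. This gives a deterministic, but a priori infinite, automaton recognising $\{g \in B^* : [g]_U = 0\}$.

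The key step, and the main obstacle, is to show that only finitely many states are useful, i.e.\ to prune the states from which no accepting path exists and show the rest are bounded. Decompose $\mathbb{R}^d$ along the eigenvectors of $M$, with eigenvalues $\beta$ and the conjugates $\beta_2, \dots, \beta_d$, all of modulus $<1$ since $\beta$ is Pisot. In the contracting (conjugate) directions, the coordinates of the states satisfy $x' = \beta_i x + O(\max|B|)$, so they remain bounded by a constant depending only on $B$ and $P$.

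In the $\beta$-direction, the component is essentially the real value $\sum_k s_k u_{m+k}$ divided by $\beta^m$ (up to the exponentially small error $u_n = c\beta^n + O(\theta^n)$ with $\theta < 1$). If this component exceeds a constant $C$ in absolute value, then the remaining digits, each at most $\max|B|$ and weighted by $u_{m'} \le c' \beta^{m'}$, cannot bring the total back to $0$, since $\sum_{m' < m} |b| u_{m'} \le C \beta^m$. Such states can be merged into one sink rejecting state.

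Hence every co-accessible state lies in a bounded region of $\mathbb{R}^d$ intersected with $\mathbb{Z}^d$, which is a finite set. The resulting finite deterministic automaton, with a sink state, recognises the set; leading zeros are harmless because the zero vector is fixed by reading $0$. The delicate point is to make the $\beta$-direction bound uniform in $m$. I would handle this by comparing $\sum_k s_k u_{m+k}$ with $\beta^m \langle s, v\rangle$, where $v$ is the left $\beta$-eigenvector, and controlling the error using the bounded contracting coordinates.
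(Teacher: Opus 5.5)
Your argument is correct and is essentially the classical proof in Frougny's chapter of \cite{Lothaire}, which the paper cites without reproducing. Since the left eigenvectors of your $M$ are $(1,\alpha_i,\dots,\alpha_i^{d-1})$, your state $s\in\mathbb{Z}^d$ just encodes $\sum_k s_k\beta^k\in\mathbb{Z}[\beta]$, reading a digit multiplies by $\beta$ and adds the digit, and finiteness follows, as you outline, from the contracting conjugate coordinates plus the Binet-formula bound on the $\beta$-coordinate of accessible, co-accessible states; one remark is that evaluating the final state as $\sum_k s_k u_k$ uses only the recurrence, not the standard initial conditions, so your construction works for arbitrary initial conditions.
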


\begin{definition} 
  Let $U$ be a numeration defined by a Pisot linear recurrence.  Let
  the finite sequence  $g$ have entries in $ℤ$ and suppose $[g]_U⩾ 0$. Define
  \begin{displaymath}
    \unor(g) ≔ ([g]_U)_U
  \end{displaymath}
  to be the normalised expansion for the natural number $[g]_U$.
\end{definition}

\begin{example}\label{ex:normalisation}
  If $U$ is the Fibonacci numeration, and if $g=1112⋅ $, then
  $[1112\cdot]_U= 1\times 3 +1\times 2 +1 \times 1+2\times 1= 8$ and
  $\unor(1112\cdot ) = (8)_U= 100000\cdot$.

\end{example}

\section{The  $U$-adic integers}\label{sec:U-adic-integers}

In this section, we introduce the space of $U$-adic integers.  This space
is obtained as a quotient group $𝔾/ℍ_U$ where the abelian group~$𝔾$ does
not depend on the numeration~$U$ and $ℍ_U$ is the subgroup of all possible
expansions of~$0$. For a sequence $g = (g_n)_{n ⩾ 0} ∈ ℤ^ℕ$, set
  $\|g\|_∞ ≔ \sup \{ g_n : n ⩾ 0\}$.

Let $𝔾$ be the set of bounded sequences of integers, that is,
\begin{displaymath}
  𝔾 ≔ ℤ^ℕ ∩ ℓ^∞  = \{ g =(g_n)_{n⩾ 0}∈ ℤ^ℕ: \|g\|_∞ < ∞  \}.
\end{displaymath}

The set $𝔾$ is endowed with a commutative group structure, inherited from
$ℤ^ℕ$, of component-wise addition, i.e., $g+g'$ is the sequence defined by 
$(g+g')_n = g_n + g'_n$. The sequence~$g-g'$ is defined similarly.  Each
finite sequence over~$ℤ$ is identified with the element of~$𝔾$ obtained by
appending a tail of zeros. With this operation of component-wise addition,
$𝔾$ is an abelian group. We will not use the natural topology on $𝔾$, which
is that points are close if they agree on a large initial head. Rather,
we introduce the appropriate topology for our setting in
Section~\ref{section:topology}.

We now come to the definition of the subgroup~$ℍ_U$ of~$𝔾$.  It is obtained
first by defining a function $\ord$ on~$𝔾$ and second a
\emph{pseudo-valuation~$\uval$}.  Define the functions
$\ord : 𝔾 → ℕ ∪ \{ ∞\}$ and $\deg : 𝔾 → ℕ ∪ \{ -∞\}$ by
\begin{displaymath}
  \ord(g) ≔ \inf \{ n : g_n ≠ 0\}
  \quad\text{and}\quad
  \deg(g) ≔ \sup \{ n : g_n ≠ 0\}.
\end{displaymath}
with the convention that $\inf ∅ = ∞$ and $\sup ∅ = -∞$ and thus
$\ord(\period{0}) =∞$ and $\deg(\period{0})= -∞$.  The function~$\ord$
satisfies $\ord(g+g') ⩾ \min \{ \ord(g), \ord(g')\}$.  In
Section~\ref{sec:notation}, we extend the functions $\ord$ and~$\deg$ to
two-sided infinite sequences.

Note that $𝔾$ and~$\ord$ are universally defined and do not depend on any
numeration.  We now define the subgroup~$ℍ_U$ by introducing the
numeration~$U$. We will see in Section~\ref{section:topology} that this
numeration also defines the appropriate topology on~$𝔾$, turning it into a
topological group.  Define the function $\uval : ℤ → ℕ ∪ \{ ∞\}$ by
\begin{displaymath}
  \uval(n) ≔ \ord((|n|)_U).
\end{displaymath}
Note that with this definition, $\uval(0)=\infty$.  The function~$\uval$
does not enjoy all the nice properties of the $p$-valuation; in particular
it is not non-Archimedean.  Note that, while we have defined $\uval$ on the
integers, its definition can be extended to act on finite sequences $g$, namely,
$\uval(g) ≔ \uval([g]_U)$.

\begin{definition}\label{def:preserve-zeros}
  Let $U$ be a Pisot numeration. We say that $U$ \emph{preserves zeros} if
  for each natural~$c$, there exists a constant~$K_c = K_c(U)$ such that
  for each finite  sequence $g ∈ ℤ^*$ with $\|g\|_∞ ⩽ c$, we have
  \begin{displaymath}
    \uval(g)  ⩾ \ord(g)-K_c.
  \end{displaymath}
\end{definition}
Definition~\eqref{def:preserve-zeros} can be relaxed. In particular, it can
be shown that if such a constant~$K_c$ exists for large enough~$c$, then it
exists for all~$c$.  More precisely let $d = \max D$ where $D$ is the
canonical digit set. If $K_c$ exists for $c = 2d$, then $K_c$ exists for
all~$c$.

\begin{example}
  If the numeration $U = (u_n)_{n ⩾ 0}$ is given by $u = p^n$ for some
  possibly non-prime integer $p ⩾ 2$, then $U$ preserves zeros, and $K_c=0$
  for each $c ∈ ℕ$.
\end{example}

\begin{example}\label{ex:k-bonacci-preserve-zeros}
  We will prove in Section~\ref{sec:condition-F} that a Pisot numeration
  preserves zeros if and only if the corresponding Pisot number satisfies
  the  Condition~F as defined in~\cite{Frougny-Solomyak-1992}. Since each
  $k$-bonacci polynomial satisfies the condition of \cite[Theorem
  2]{Frougny-Solomyak-1992}, it satisfies the Condition~F and hence the
  $k$-bonacci numeration $U$ preserves zeros by
  Proposition~\ref{pro:bnorm2unorm}.

  The constants $K_c$ can be computed using transducers
    performing normalisation.  For example, if $U$ is the Fibonacci
  numeration, then it can be shown that $K_1 = 2$ and if $c = 2$, then
  $K_2⩾ 3$, because, for example, the normalisation of
  $2\bar{2}2\bar{2}20000$ is $100101010$. Computation suggests that
  $K_2=3$.
\end{example}
The normalised expansion of $m+n$ can be obtained by first adding
component-wise the normalised expansions of $m$ and~$n$, and then
normalising the resulting finite sequence which is over the alphabet
$\{0, 1, … ,2\max D \}$. Corollary~\ref{cor:zvalsum} follows
from this. Note that the $p$-adic valuation satisfies a similar relation
without the constant on the right hand side. The following corollary justifies our description of $\uval$ as a  pseudo-valuation.
\begin{corollary} \label{cor:zvalsum}
  Let $U$ be a numeration that preserves zeros, with finite canonical digit
  set~$D$.  Let $d ≔ \max D$.  Then for $m, n ∈ ℤ$, we have
    \begin{displaymath}
    \uval(m+n) ⩾ \min(\uval(m), \uval(n))-K_{2d}.
  \end{displaymath}
\end{corollary}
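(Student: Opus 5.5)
The plan is to reduce the statement directly to Definition~\ref{def:preserve-zeros} with $c = 2d$. We represent $m+n$ by a signed combination of the normalised expansions of $|m|$ and $|n|$. That combination is a finite sequence with entries bounded by $2d$ in absolute value, and its order is at least $\min(\uval(m),\uval(n))$. Two features of the definitions make this work. First, $\uval$ depends only on $|\cdot|$, so $\uval(g) = \uval([g]_U)$ makes sense even when $[g]_U<0$. Second, Definition~\ref{def:preserve-zeros} allows arbitrary finite sequences in $ℤ^*$, including negative entries. I read $\|g\|_∞ ⩽ c$ as a bound on the absolute values of the entries, which is clearly the intended meaning.

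First I dispose of trivial cases. If $m+n=0$, then $\uval(m+n)=∞$ and there is nothing to prove. If $m=0$ or $n=0$, the statement is immediate since $K_{2d}⩾0$ can be assumed; alternatively, the general argument below also covers this case, because $(0)_U$ is the empty word, of order $∞$.

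In the main step, let $a=(|m|)_U$ and $b=(|n|)_U$, viewed as elements of $𝔾$ with digits in $D ⊆ \{0,…,d\}$. Let $\varepsilon_m,\varepsilon_n∈\{\pm1\}$ be the signs of $m$ and $n$, and set
\begin{displaymath}
  g ≔ \varepsilon_m a + \varepsilon_n b .
\end{displaymath}
This is a finite sequence over $ℤ$ satisfying $[g]_U = \varepsilon_m|m| + \varepsilon_n|n| = m+n$, by linearity of $g ↦ [g]_U$. Every entry satisfies $|g_k| ⩽ 2d$, so $\|g\|_∞⩽ 2d$. Moreover $g_k=0$ whenever $a_k=b_k=0$, so
\begin{displaymath}
  \ord(g) ⩾ \min(\ord(a),\ord(b)) = \min(\uval(m),\uval(n)).
\end{displaymath}
Applying Definition~\ref{def:preserve-zeros} with $c=2d$ then gives
\begin{displaymath}
  \uval(m+n) = \uval([g]_U) = \uval(g) ⩾ \ord(g) - K_{2d} ⩾ \min(\uval(m),\uval(n)) - K_{2d}.
\end{displaymath}

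There is no real obstacle here. The only point requiring care is the case of mixed signs, for example $m>0>n$. There the sum $m+n$ may be negative, and its normalised expansion is then that of $|m+n|$. This is handled entirely by the convention $\uval(x)=\ord((|x|)_U)$, together with the fact that zero preservation is stated for signed sequences. It is this flexibility that lets a single application of Definition~\ref{def:preserve-zeros} cover all sign cases, rather than treating subtraction separately.
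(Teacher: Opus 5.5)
Your proposal is correct and follows essentially the same route as the paper. Both take $u=(|m|)_U$ and $v=(|n|)_U$, note that a signed combination of them has entries bounded by $2d$ and order at least $\min(\uval(m),\uval(n))$, and apply zero preservation with $c=2d$. The only cosmetic difference is sign handling: the paper splits into sign cases so that $|m+n|\in\{[u+v]_U,[u-v]_U,[v-u]_U\}$ is always a nonnegative value, whereas you absorb the signs into $\varepsilon_m,\varepsilon_n$ and use that $\uval$ depends only on absolute value, which is equally valid.
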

\begin{proof}
  Let $u = (|m|)_U$ and $v = (|n|)_U$ be the $U$-expansions of $|m|$
  and~$|n|$.  By definition $\|u\|_∞, \|v\|_∞ ⩽ d$.  If the signs of $m$
  and~$n$ are equal, then $|m+n| = |m|+|n|$.  If the signs of $m$ and~$n$
  are different, then $|m+n|$ is equal to either $|m|-|n|$ or $|n|-|m|$
  depending on the sign of $m-n$.  This means that in all cases $|m+n|$
  belongs to $\{[u+v]_U, [u-v]_U, [v-u]_U\}$.  Since
  $\|u+v\|_∞, \|u-v\|_∞, \|v-u\|_∞ ⩽ 2d$, the result is proved.
\end{proof}

\begin{lemma}\label{lem:altdefH}
  Let $U$ be a numeration that preserves zeros and let $g ∈ 𝔾$.  Then,
  \begin{displaymath}
    \lim_{n → ∞} \uval(g_{[0:n)}) = ∞ \iff \limsup_{n → ∞}\uval(g_{[0:n)}) = ∞
  \end{displaymath}
\end{lemma}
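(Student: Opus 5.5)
The forward implication is trivial, so the plan concerns only the converse: assume $\limsup_{n→∞}\uval(g_{[0:n)}) = ∞$ and deduce that $\uval(g_{[0:n)}) → ∞$. The idea is to compare the value at an arbitrary large $n$ with the value at a nearby "good" index $m<n$ where $\uval(g_{[0:m)})$ is huge. The difference $g_{[0:n)} - g_{[0:m)} = g_{[m:n)}$ is a finite sequence supported on indices $⩾ m$. Since $g ∈ 𝔾$, we have $\|g_{[m:n)}\|_∞ ⩽ c ≔ \sup_k |g_k|$, which is finite. By Definition~\ref{def:preserve-zeros} this gives
\begin{displaymath}
  \uval(g_{[m:n)}) ⩾ \ord(g_{[m:n)}) - K_c ⩾ m - K_c .
\end{displaymath}

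Next, write $[g_{[0:n)}]_U = [g_{[0:m)}]_U + [g_{[m:n)}]_U$ and apply Corollary~\ref{cor:zvalsum} to the two integers on the right. Setting $K ≔ K_{2d}$, this yields
\begin{displaymath}
  \uval(g_{[0:n)}) ⩾ \min\bigl(\uval(g_{[0:m)}),\, m-K_c\bigr) - K
\end{displaymath}
for all $n>m$. The bound depends on $n$ only through the requirement $n>m$.

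Now fix a target $N$. Since the $\limsup$ is infinite, we can choose an index $m ⩾ N+K+K_c$ with $\uval(g_{[0:m)}) ⩾ N+K$. For every $n>m$ the displayed inequality then gives $\uval(g_{[0:n)}) ⩾ N$. As $N$ was arbitrary, $\liminf_{n→∞}\uval(g_{[0:n)}) = ∞$, which is the claim.

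The only point needing care is the choice of $m$. A single good index $m$ must control all larger $n$, and this works because the tail contribution $g_{[m:n)}$ is bounded below uniformly in $n$ via $\ord$. We should also check that Corollary~\ref{cor:zvalsum} applies to integers of either sign, since $[g_{[m:n)}]_U$ may be negative. This is fine because $\uval$ is defined via $|n|$, and the zero-preservation hypothesis is stated for finite sequences in $ℤ^*$ with arbitrary sign, which matches the convention $\uval(g) = \uval([g]_U)$.
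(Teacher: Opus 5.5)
Your proposal is correct and matches the paper's proof: the paper also bounds $\uval(g_{[0:n)})$ below by $\min(m,\uval(g_{[0:m)}))-K_c-K_{2d}$. It does this by applying zero preservation to the tail $g_{[0:n)}-g_{[0:m)}$ (supported on indices $\geq m$) and then Corollary~\ref{cor:zvalsum}, and afterwards picks a suitable $m$ using the infinite $\limsup$. Your bound is marginally tighter because you do not subtract $K_c$ from the $\uval(g_{[0:m)})$ term, but this makes no difference to the argument.
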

\begin{proof}
  One implication is clear. To prove the other, we start with the following
  remark.  By definition of $𝔾$, if $g ∈ 𝔾$ then there exists a
    constant~$c$ such that $|g_n| ⩽ c$ for each $n$.
  Since~$U$ preserves zeros, there exists an integer~$K_c$ such that
  $\uval(w) ⩾ \ord(w)-K_c$ for each factor~$w$ of~$g$.  We claim that if
  $n⩾ m ⩾ 0$, then
        \begin{displaymath}
  		\uval(g_{[0:n)})⩾ \min(m, \uval(g_{[0:m)})-K_c-K_{2d}.
  \end{displaymath}
  Let $M= \min(m, \uval(g_{[0:m)}))$ and let $u = g_{[0:m)} $,
  $v = g_{[0:n)}$ and $w = v - u$, where the symbol~$-$ denotes the
  component-wise subtraction of finite sequences.  Then by definition of $M$,
  $\uval(g_{[0:m)})⩾ M ⩾ M-K_c$, and the finite sequence~$w$ satisfies
  $\ord(w) ⩾ m ⩾ M$, so that $\uval(w) ⩾ M-K_c$.  It follows 
  from the relation $[v]_U = [u]_U+[w]_U$ and Corollary~\ref{cor:zvalsum} that
  $\uval(v) ⩾ M-K_c-K_{2d}$.  This completes the proof of the claim.
  
  Now suppose that $\limsup_{n → ∞} \uval(g_{[0:n)}) = ∞$, and let $L$ be
  some fixed integer~$L ⩾ 0$. By hypothesis we can choose an integer~$m$
  such that $m ⩾ K_c+L+K_{2d}$ and $\uval(g_{[0:m)}) ⩾ K_c+K_{2d}+L$, and now
  the proved claim implies that if $n ⩾ m$, then $\uval(g_{[0:n)}) ⩾
  L$. But this is true for any L and therefore
  $\limsup_{n → ∞} \uval(g_{[0:n)}) = \lim_{n → ∞} \uval(g_{[0:n)})$,
  proving the lemma.
\end{proof}
We define 
\begin{displaymath}
  ℍ_U ≔ \{ g ∈ 𝔾 : \lim_{n → ∞} \uval(g_{[0:n)}) = ∞ \}.
\end{displaymath}
By Lemma~\ref{lem:altdefH}, the limit occurring in this definition could be
replaced by a $\limsup$.  It is straightforward that $ℍ_U$ is a subgroup
of~$𝔾$.
\begin{definition}
  We call  the group $\uspace ≔ 𝔾/ℍ_U$ the \emph{U-adic integers}.
\end{definition}

It does not seem to be easy to use $\uval$ to define a topology
on~$\uspace$. For Pisot numerations we will define, in
Section~\ref{section:topology}, a norm on~$𝔾$, through which we can endow
$\uspace$ with the quotient topology.

\subsection{Normalised elements in $\uspace$}
\label{sec:beta-numeration}

In this section, we show that each class of $\uspace ≔ 𝔾/ℍ_U$ contains
a $U$-normalised sequence.

\begin{notation*}
  Given a numeration $U$, set
  \begin{align}\label{eq:closure}
    ℕ_U & ≔ \{ (n)_U: n∈ ℕ\} \nonumber \\
    \text{and}\qquad
    \overline{ℕ}_U & ≔ \{ g ∈ 𝔾 : g[0:n) ∈ ℕ_U \text{ for each $n ⩾ 0$}\}.
  \end{align}
  We use this notation because $\overline{ℕ}_U$ is the topological closure
  of~$ℕ_U$, not just in the natural topology for~$𝔾$, but also for the
  topology that we introduce in Section~\ref{section:topology}. We say that
  $ g ∈ 𝔾$ is \emph{$U$-normalised} if $g ∈ \overline{ℕ}_U$.
\end{notation*}
Note that $\overline{ℕ}_U$ is \emph{not} invariant under shifting. For
example, if $U$ is the Fibonacci numeration with standard initial
conditions, then normalised sequences always start with~$0$, and shifting
can destroy this.

In this section we will show that, for numerations arising from a Pisot
numeration that preserves zeros, we can always find representatives of
elements in $\uspace$ that are $U$-normalised. For this we recall some basic
notions around $β$-numerations defined by a leading root of a Pisot
polynomial~$P$.

Let $β > 1$ be a real number.  A \emph{$β$-expansion} of a non-negative
real number~$x$ is a sequence $(d_k)_{k ⩽ n}$ of integers such that
$x = ∑_{k ⩽ n} d_kβ^k$.  Note that the integer~$n$ might be negative if
$x< 1$.  The \emph{normalised $β$-expansion} is the lexicographically
greatest $β$-expansion.  It is obtained by the greedy algorithm and it is
characterized by the fact that
\begin{equation}\label{eq:beta-normalised}
  ∑_{i ⩽ k} d_iβ^i < β^{k+1}
\end{equation}
for each $k ⩽ n$.  A sequence satisfying
Condition~\eqref{eq:beta-normalised} is called \emph{$β$-normalised}.

A related expansion for $x ∈ [0,1]$ is the Rényi expansion, defined using
the map $T_β(x) = β x \bmod 1$. If $0 ⩽ x<1$ and $x = ∑_{k⩽ n} d_kβ^k$,
then $d_k$ equals the integer part of $β T_β^{k-1} (x)$.  The $β$-expansion
of $x = 1$, which is just $β^0$, differs from its Rényi expansion , which
we write as $d_β(1) = t_1t_2t_3⋯$.  Note that we use $t_k$ and not $d_k$
for $d_β(1)$ due to its ubiquitous use.

Denote by $D_β$ the set of $β$-expansions of $x ∈ [0,1)$, and denote the
shift-orbit closure of $D_β$ by~$S_β$. Let $σ$ denote the (one-sided) shift
map.  Let $\ltlex$ denote the lexicographical order.  If $d_β(1)$ is
infinite, set $d_β^{*}(1) ≔ d_β(1)$.  Otherwise if $d_β(1)=t_1 ⋯ t_m$ with
$t_m ≠ 0$, set $d_β^{*}(1) ≔ \period{t_1⋯ t_{m-1}(t_m-1)}$.  Note that all
suffixes of $d_β(1)$ are lexicographically smaller than $d_β(1)$
\cite[Chapter 7]{Lothaire}.

\begin{example}[The $k$-bonacci numerations]\label{ex:k-bonacci-2}
  Recall the numerations from Example~\ref{ex:k-bonacci}, which satisfy the
  recurrence that has $X^k-X^{k-1}-⋯-X-1$ as minimal polynomial. In this
  case $d_β(1)= \underbrace{1⋯1}_{k}$.
\end{example}

 We record the following fundamental result
 \cite{Parry-1960,Bertrand-1977,Schmidt-1980}.

\begin{theorem}\label{thm:Pisot-properties}
  We have $x∈ S_β$ if and only if for each $p⩾ 0$,
  $σ^p(x) \ltlex d_β^*(1)$.  If $β$ is Pisot, then $d_β^*(1)$ is eventually
  periodic and $S_β$ is sofic.
 \end{theorem}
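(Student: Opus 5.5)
The first assertion is Parry's characterisation, and I would prove it directly from the greedy algorithm. I will read the lexicographic condition as $σ^p(x) \ltlex d_β^*(1)$ for sequences coming from $D_β$, and as $σ^p(x) ⩽_{\text{lex}} d_β^*(1)$ for the orbit closure $S_β$. I expect the stated strict form to need this adjustment, since $d_β^*(1)$ itself lies in $S_β$. Throughout I identify $x∈[0,1)$ with its greedy digit sequence $d_1d_2⋯$, where $d_k = \lfloor β T_β^{k-1}(x)\rfloor$.

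First I would show that the map $y ↦ d_β(y)$ is strictly increasing on $[0,1)$ for $\ltlex$. This follows because $\lfloor β y\rfloor$ is non-decreasing and $T_β$ is increasing on each branch. Next I would show that $\lim_{y→1^-} d_β(y) = d_β^*(1)$ coordinatewise. If $d_β(1)$ is infinite, no $T_β^k(1)$ hits $0$ and continuity on branches gives the limit. If $d_β(1)=t_1⋯t_m$ is finite, then near $1$ from the left the last digit drops to $t_m-1$ and the orbit returns near $1^-$, which produces the period $t_1⋯t_{m-1}(t_m-1)$. Since $D_β$ is shift-invariant ($σ d_β(y) = d_β(T_β y)$), every element of $D_β$ satisfies $σ^p(x) \ltlex d_β^*(1)$ for all $p$, and passing to the orbit closure gives the non-strict inequality on $S_β$.

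For the converse, take a sequence satisfying the inequalities. Using $1 = ∑_i t_iβ^{-i}$ and the corresponding identity for $d_β^*(1)$, I would show by induction on the first index where $σ^p(x)$ and $d_β^*(1)$ differ that $∑_{k>p} x_kβ^{p-k} < 1$ for every $p$. This is exactly the greedy condition~\eqref{eq:beta-normalised}, so $x = d_β(y)$ for $y = ∑ x_kβ^{-k}$. A sequence satisfying only the non-strict inequalities is then a limit of such sequences, obtained by truncating and padding with zeros, and so lies in $S_β$.

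For the Pisot part, put $r_n = T_β^n(1)$, so that $r_{n+1} = βr_n - t_{n+1}$ with $r_n∈ℤ[β]∩[0,1]$. For each conjugate $β_j≠β$, applying the field embedding gives $r^{(j)}_{n+1} = β_j r^{(j)}_n - t_{n+1}$. Since $|β_j|<1$ and $0 ⩽ t_{n+1} < β$, this yields $|r^{(j)}_n| ⩽ β/(1-|β_j|)$ for all $n$. Hence the vectors $(r_n, r^{(j)}_n)_j$ lie in a bounded region of the image of $ℤ[β]$ under the Minkowski embedding, which is a lattice. So they take only finitely many values, and the sequence $(r_n)$ is eventually periodic. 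Therefore $d_β(1)$, and hence $d_β^*(1)$, is eventually periodic, because $t_{n+1} = \lfloor βr_n\rfloor$ and a finite $d_β(1)$ gives a purely periodic $d_β^*(1)$.

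Soficity then follows from an automaton whose states record the length of the longest suffix of the word read so far that is a prefix of $d_β^*(1)$ on which the lexicographic comparison is still tied. Eventual periodicity lets prefixes of the same residue in the periodic part be identified, which leaves finitely many states, and all states are accepting.

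I expect the main obstacle to be the first step, namely the limit $d_β(y)→d_β^*(1)$ and the strict versus non-strict bookkeeping when $d_β(1)$ is finite. The Pisot argument is a routine lattice-counting argument.
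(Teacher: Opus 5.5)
The paper does not prove this statement. It quotes it from Parry, Bertrand and Schmidt, and your proposal correctly reconstructs those classical arguments:
\begin{itemize}
\item Parry's greedy characterisation;
\item the Bertrand--Schmidt conjugate-bound argument, which is simpler here because $T_\beta^n(1)\in\mathbb{Z}[\beta]$ has no denominators;
\item the standard automaton for the $\beta$-shift.
\end{itemize}
Your reading of the first assertion is also right. With the strict inequality, the condition characterises $D_\beta$ and not its closure, because $d_\beta^*(1)$ lies in $S_\beta$ but fails $\sigma^0(x)\ltlex d_\beta^*(1)$. For $S_\beta$ one needs $\leqslant_{\text{lex}}$.

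One step does not close as you phrased it. Let $V_p=\sum_{i\geqslant1}x_{p+i}\beta^{-i}$, $s=d_\beta^*(1)$, and let $j$ be the first index with $x_{p+j}\neq s_j$. The identity $\sum_i s_i\beta^{-i}=1$ gives
\[ 1-V_p\;\geqslant\;\beta^{-j}\Bigl(1-V_{p+j}+\sum_{i\geqslant1}s_{j+i}\beta^{-i}\Bigr). \]
This reduces $V_p<1$ to a statement at the \emph{later} position $p+j$. So an ``induction on the first differing index'' has no base case. There are two ways to fix it:
\begin{itemize}
\item Use an a priori bound: if $M=\sup_pV_p>1$, the display yields $M\leqslant 1+\beta^{-1}(M-1)<M$, a contradiction.
\item Prove the claim first for finitely supported sequences, by backward induction from the last nonzero digit. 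Then pass to zero-padded truncations, which still satisfy the strict inequalities.
\end{itemize}
Once $V_{p+j}\leqslant 1$ is known, strictness follows because no tail of $d_\beta^*(1)$ is zero. The same fact is what makes the truncations in your closure argument satisfy the strict inequalities.

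In the sofic part, you should also state explicitly two things: why recording only the longest tied suffix suffices, and why a strict drop resets the state to $0$. Both follow from $\sigma^k(d_\beta^*(1))\leqslant_{\text{lex}} d_\beta^*(1)$, which you already have from $d_\beta^*(1)\in S_\beta$.
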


The following lemma is classical.  It tells us that the $n$-th term of a
Pisot numeration grows like $β^n$ with an exponentially decaying
difference.  We include the proof for completeness.
\begin{lemma} \label{lem:boundun}
  Let $U = (u_n)_{n ⩾ 0}$ be a Pisot numeration associated to~$β$. Then
  there are real numbers $0<\alphanorm<1$,  $λ>0$ and $K>0$ such that
  $|u_n - λβ^n| ⩽ K\alphanorm^n$ for each $n ⩾ 0$.
\end{lemma}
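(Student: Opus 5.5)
The plan is to solve the recurrence explicitly through the roots of the characteristic polynomial. Since $P$ is irreducible over~$ℚ$, it is separable, so its roots $β = β_1, β_2, …, β_d$ are pairwise distinct. The general solution of the linear recurrence defined by~$P$ is therefore
\begin{displaymath}
  u_n = ∑_{j=1}^{d} λ_j β_j^n \qquad (n ⩾ -d+1),
\end{displaymath}
where the complex coefficients $λ_j$ are determined by the $d$ initial conditions $u_0, u_{-1}, …, u_{-d+1}$. To obtain them, we solve a Vandermonde system, which is invertible because the $β_j$ are distinct. Note that $β_j ≠ 0$ since $P$ is irreducible of degree $d$, unless $P(X)=X$, which is excluded by $β>1$; hence negative indices cause no trouble. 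We set $λ ≔ λ_1$.

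For the error term, put $\alphanorm ≔ \max_{j ⩾ 2} |β_j|$. This is $<1$ because $β$ is Pisot; if $d=1$ we may take any $\alphanorm\in(0,1)$. If some of the $β_j$ vanished we would still be fine, but as noted they do not. With $K ≔ ∑_{j⩾2}|λ_j|$ we then have, for all $n ⩾ 0$,
\begin{displaymath}
  |u_n - λβ^n| ⩽ ∑_{j⩾2} |λ_j||β_j|^n ⩽ K\alphanorm^n .
\end{displaymath}
If $K=0$, we replace it by any positive constant. We should also check that $λ$ is real: $u_n$ and $β$ are real, so $λ = \lim_n u_n β^{-n}$ is real, the other terms tending to $0$.

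The only real point is positivity, $λ>0$. Since the sequence is assumed strictly positive, $u_n>0$ for all $n$, so $λ = \lim u_n β^{-n} ⩾ 0$. Suppose, for contradiction, that $λ=0$. Then $u_n → 0$, and since the $u_n$ are integers, $u_n = 0$ for all large $n$, contradicting positivity. (Alternatively, $λ=0$ would mean that $(u_n)$ satisfies the recurrence of $P(X)/(X-β)$, which has no rational coefficients; a Galois argument forces all $λ_j=0$, so $u\equiv 0$.) Hence $λ>0$. From this we also obtain the remark in the text that $u_n$ is eventually strictly increasing, because $u_{n+1}-u_n = λβ^n(β-1) + O(\alphanorm^n) → ∞$.

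I expect no serious obstacle. The step requiring care is justifying $λ>0$, which uses the integrality and positivity of $(u_n)$ rather than any property of the particular initial conditions.
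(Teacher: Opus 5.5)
Your proposal is correct and follows the paper's proof: the Binet formula over the distinct roots of the irreducible $P$, $\gamma$ taken as the largest modulus of a non-leading conjugate, $K=\sum_{j\geq 2}|\lambda_j|$, and positivity of the leading coefficient. Your integrality argument excluding $\lambda=0$ (if $\lambda=0$ then $u_n\to 0$, which forces $u_n=0$ for large $n$) is more careful than the paper's one-line justification, which as written only yields $\lambda\geq 0$.
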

\begin{proof}
  We  let $α_0 = β$ and let $α_1,…,α_{d-1}$ be the algebraic
  conjugates of~$β$.  Since the annihilating  polynomial of $β$ is irreducible, a
  general Binet formula tells us that there exist coefficients
  $(λ_k)_{0 ⩽ k ⩽ d-1}$ such that $u_n = ∑_{k=0}^{d-1}λ_kα_k^n$ for each
  $n ⩾ 0$. Thus setting $\alphanorm ≔ \max_k \{ |α_k| : 1 ⩽ k⩽ d-1\}<1$ and
  $K = ∑_{k=1}^{d-1}|λ_k|$,
  \begin{displaymath}
    \left|u_n - λ_0β^n\right| ⩽
    ∑_{k=1}^{d-1} |λ_kα_k^n| ⩽ ∑_{k=1}^{d-1} |λ_k|\alphanorm^n  ⩽ K\alphanorm^n 
  \end{displaymath}
   Note that
  $λ ≔ λ_0 >0$ since $β>1$ and the $u_n$
    are eventually positive.
\end{proof}
 
We prove in Section~\ref{sec:condition-F} that, for a numeration~$U$
associated to a Pisot number~$β$, the preservation of zeros by~$U$ is
equivalent to the Condition~F for~$β$, introduced by Frougny and Solomyak
\cite{Frougny-Solomyak-1992}, which implies that $d_β(1)$ is finite.  Below
we give a shorter and direct proof that if $U$ preserves zeros, then
$d_β(1)$ is finite.
\begin{lemma}\label{lem:dbetafinite}
  Let $U$ be a Pisot numeration associated to~$β$.  If
  $U$ preserves zeros, then $d_β(1)$ is finite.
\end{lemma}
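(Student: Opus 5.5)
The plan is to argue by contraposition: assume $d_β(1)$ is infinite and build, for every $m$, a finite sequence $g$ with digits bounded independently of $m$, with $\ord(g)$ about $m$, but with $\uval(g)$ bounded. This contradicts Definition~\ref{def:preserve-zeros}.

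When $d_β(1)=t_1t_2⋯$ is infinite we have $d_β^*(1)=d_β(1)$. By Theorem~\ref{thm:Pisot-properties} it is eventually periodic, and since it is infinite its period contains a nonzero digit. Let $p$ be the preperiod plus the period length. Then every factor of length at least $p$ of a tail of $d_β(1)$ contains a nonzero digit, and every suffix $t_{n+1}⋯t_N$ with $N-n ⩾ p$ has a nonzero digit in its last $p$ positions.

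The first step is the classical identity, for standard initial conditions with $d_β(1)$ infinite,
\begin{displaymath}
  \sum_{i=1}^{N} t_i u_{N-i} = u_N - 1 ,
\end{displaymath}
with $t_1⋯t_N$ the normalised expansion $(u_N-1)_U$. I would prove it by induction on $N$ from the recurrence: the $t_i$ are the digits of $1$ in the $β$-expansion, so $1=\sum_i t_iβ^{-i}$. I would also use the fact that a finite word all of whose suffixes are lexicographically below $d_β^*(1)$ is $U$-normalised, which is the finite analogue of Theorem~\ref{thm:Pisot-properties} for standard $U$. The estimate in Lemma~\ref{lem:boundun} can be used to control the error terms $|u_n-λβ^n|$. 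This also gives that every suffix $t_{n+1}⋯t_N$ is $U$-normalised.

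The second step is the construction. Given $n$, set $N=n+m$ and let $g$ be the finite sequence $1\,0^{N}$ (a $1$ in position $N$) minus $t_1⋯t_n$ placed in positions $N-1,…,N-n$. Then $\|g\|_∞ ⩽ \max(1,\max_i t_i)$, which does not depend on $m$, and $\ord(g)=N-n=m$. By the identity,
\begin{displaymath}
  [g]_U = u_N - \sum_{i=1}^n t_iu_{N-i} = 1 + [t_{n+1}⋯t_N]_U .
\end{displaymath}
Since $t_{n+1}⋯t_N$ is normalised and has a nonzero digit among its last $p$ positions, the value $r=[t_{n+1}⋯t_N]_U$ satisfies $\uval(r) < p$.

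The main obstacle is the extra $+1$. I need $\uval(r+1)$ to be bounded as well, but Corollary~\ref{cor:zvalsum} gives no lower-bound information here because $\uval(1)=0$. My first attempt is to shift the window: replace $t_1⋯t_n$ by $t_1⋯t_{N}$ with the last block $t_{N-q+1}⋯t_N$ removed. Here $q$ is chosen (bounded, within one period) so that the remaining tail ends in a zero digit $t_{N-q}=0$. Then, after absorbing the $1$ into that position, I would check via the lexicographic criterion that the result is still admissible, and so $U$-normalised, with a nonzero digit at position at most $p+q$.

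If that check is awkward, a fallback is to subtract $1$ at position $0$ from $g$. This lowers $\ord(g)$ to $0$, so I would instead apply preservation of zeros to the difference of two such sequences for different $n$, where the unit terms cancel. In either version $\uval([g]_U)$ stays bounded while $\ord(g)=m\to∞$, so no constant $K_c$ can exist, proving that $d_β(1)$ is finite.
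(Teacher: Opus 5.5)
\textbf{There is a genuine gap in your first step.} Three claims you rely on are properties of the canonical numeration $U_\beta$ defined by $u_N=t_1u_{N-1}+\cdots+t_Nu_0+1$:
the identity $\sum_{i=1}^{N} t_i u_{N-i}=u_N-1$; the claim $(u_N-1)_U=t_1\cdots t_N$; and the ``finite analogue'' that every lexicographically admissible finite word is $U$-normalised.
They are not properties of the paper's $U$, which is generated by the minimal polynomial of $\beta$ with initial conditions $0,\dots,0,1$. For the paper's $U$, the defect $e_N:=u_N-\sum_{i=1}^N t_iu_{N-i}$ is in general only a bounded, eventually periodic sequence, so the identity cannot be derived from the recurrence.

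For instance, $P(X)=X^3-3X^2-X+1$ is irreducible with Pisot root $\beta\approx 3.214$ (conjugates $\approx 0.461$ and $\approx -0.675$), and $d_\beta(1)=3\period{02}$ is infinite. The standard $U$ is $1,3,10,32,103,\dots$, and $t_1u_0=3=u_1$, $3u_2+0\cdot u_1+2u_0=32=u_3$. So $e_N=0$ for odd $N$, the word $t_1\cdots t_N$ is then not $U$-normalised, and the admissible one-letter word $3$ is not $U$-normalised since $u_1=3$. The paper only identifies $U$- and $\beta$-normalisation far from the radix point (Lemma~\ref{lem:unormbnorm-take2}).

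Hence the exact formula $[g]_U=1+[t_{n+1}\cdots t_N]_U$, with a $U$-normalised word on the right, is not available, and your bound $\uval<p$ rests on it. The ``$+1$'' is really an unknown bounded $e_N$ of either sign, and the low-order digits of $t_{n+1}\cdots t_N$ need not be admissible. Separately, your treatment of the $+1$ is unfinished. The zero-digit trick fails when the period contains no $0$ (e.g.\ $d_\beta(1)=2\period{1}$ for $\beta=(3+\sqrt5)/2$), and the difference-of-two-sequences fallback is not specified.

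\textbf{How to repair it.} Your $g$ is exactly the paper's $\period{0}1(-t_1)\cdots(-t_\ell)0^k$, so the construction is right; what must change is how $\uval([g]_U)$ is bounded. The paper uses only estimates. By Lemma~\ref{lem:boundun}, $[g]_U$ is within a small error of $\lambda\beta^{m}T_\beta^{n}(1)$, where $T_\beta^{n}(1)=\sum_{j\geq1}t_{n+j}\beta^{-j}$. One then reads off the \emph{most significant} digits of the normalisation: a run of roughly $m/q$ copies of the period ($q$ its length), which puts nonzero digits far below $\ord(g)$.

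A cleaner way to finish, with no digit computation at all, is by contradiction. Suppose $m$ and $s=\uval([g]_U)$ were both at least the constants of Lemmas~\ref{lem:unormbnorm-take2} and~\ref{lem:dividesPw} (the latter taken for the digit bound $\max(1,\max_i t_i)+\max D$). Write $|[g]_U|=[v]_U$ with $v$ $U$-normalised and $\ord(v)=s$. Then $v$ is $\beta$-normalised, and Lemma~\ref{lem:dividesPw} applied to $g\mp v$ gives $[g]_\beta=\pm[v]_\beta$. But for $n\geq 1$, $[g]_\beta=\beta^{m}T_\beta^{n}(1)>0$ has as its $\beta$-expansion the infinite word $t_{n+1}t_{n+2}\cdots$ (shifted). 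That cannot coincide with the finite $\beta$-normalised $v$. Hence $\uval([g]_U)$ stays bounded independently of $m$ while $\ord(g)\geq m$, contradicting preservation of zeros.
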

\begin{proof}
  If $d_β(1)$ is not finite, then by Theorem~\ref{thm:Pisot-properties}, it
  is ultimately periodic; set $d_β(1) = t_1 ⋯ t_ℓ \period{t_{ℓ+1} ⋯ t_m}$.
  Consider the finite sequence $g = \period{0}1(-t_1) ⋯ (-t_ℓ)0^k$.
  Using Lemma~\ref{lem:boundun}, it is easily proved that the most
  significant digits of the normalisation of~$g$ are $(t_{ℓ+1} ⋯ t_m)^p$
  where $p$ has magnitude $k/(m-ℓ)$.  This shows that $U$ does not preserve
  zeros.
\end{proof}

\begin{lemma}\label{lem:positive-H-entries}
    Let $U=(u_n)_{n⩾ 0}$ be a Pisot numeration that preserves zeros.  Then there
    exists $h=(h_n) ∈ ℍ_U$ with $h_n > 0$ for each~$n ⩾ 0$.
\end{lemma}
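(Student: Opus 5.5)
The plan is to build $h$ as the sum of two pieces. The first is an infinite sum of shifted copies of a single finite expansion of~$0$; it comes from the finiteness of $d_β(1)$ and is strictly positive from some index on. The second is a finite expansion of~$0$ that is positive on the initial indices. We then take a large multiple of the first and add the second.

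\emph{Step 1: a polynomial relation.} By Lemma~\ref{lem:dbetafinite}, $d_β(1)=t_1⋯t_m$ is finite with $t_m≠0$. This gives $β^m=t_1β^{m-1}+⋯+t_m$, so $β$ is a root of $Q(X)=X^m-t_1X^{m-1}-⋯-t_m$. Since $P$ is the minimal polynomial of~$β$, we have $Q=PR$ for some $R∈ℤ[X]$. Every shift of the recurrence for $U$ holds for all large indices. Writing the $Q$-relation as an integer combination of such shifts of the $P$-relation, we get some $k_0$ with
\begin{displaymath}
  u_{k+m}-t_1u_{k+m-1}-⋯-t_mu_k=0 \quad\text{for all } k⩾k_0 .
\end{displaymath}
So the finite sequence $z^{(k)}$, with digit $-1$ at position $k+m$ and $t_i$ at position $k+m-i$, satisfies $[z^{(k)}]_U=0$.

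\emph{Step 2: the infinite sum.} Set $h'≔∑_{k⩾k_0}z^{(k)}$. Each coordinate receives at most $m+1$ contributions, so $h'∈𝔾$. All entries of $h'$ are nonnegative, being partial sums of the $t_i$. For $n⩾k_0+m$ we have $h'_n=t_1+⋯+t_m-1$. If $t_1+⋯+t_m=1$, then $d_β(1)=1$ and $β=1$, which is impossible. Hence $h'_n⩾1$ for all $n⩾L≔k_0+m$.

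For membership in $ℍ_U$, fix a head $h'_{[0:n)}$. It equals the sum of the $z^{(k)}$ lying entirely below~$n$, each with $U$-value~$0$, plus the truncations of those $z^{(k)}$ that straddle position~$n$. The boundary part is a finite sequence with entries bounded by $c≔\sum_i t_i+1$ and order at least $n-m$. Since $U$ preserves zeros, $\uval(h'_{[0:n)})⩾n-m-K_c→∞$, so $h'∈ℍ_U$.

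\emph{Step 3: fixing the low indices.} I expect this to be the main (though mild) obstacle, since $h'$ vanishes below~$k_0$. Let $S$ be the $U$-value of the sequence with digit $1$ at positions $1,…,L-1$ and $0$ elsewhere. Because $u_0=1$, we can choose $r∈[0,u_L)$ with $u_L\mid S+1+r$. Define the finite sequence $v$ by
\begin{displaymath}
  v_0=1+r,\qquad v_j=1 \ (1⩽j<L),\qquad v_L=-(S+1+r)/u_L,\qquad v_j=0 \ (j>L).
\end{displaymath}
Then $[v]_U=0$. Since $v$ is finite, every head $v_{[0:n)}$ with $n>L$ equals $v$ itself and has $\uval=∞$, so $v∈ℍ_U$.

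\emph{Step 4: conclusion.} Put $h≔v+Mh'$ with $M>|v_L|$. Then $h∈ℍ_U$, because $ℍ_U$ is a subgroup. For $n<L$ we have $h_n⩾v_n⩾1$. For $n=L$ we have $h_n⩾M+v_L>0$, and for $n>L$ we have $h_n=Mh'_n⩾M>0$. So every entry of $h$ is positive.

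The one point to double-check is the existence of $k_0$ in Step~1, which follows from the $P$-relation holding for all $n⩾1$ together with $Q=PR$. Even if $k_0$ were large, Step~3 absorbs any number of initial indices, so the argument does not depend on its size.
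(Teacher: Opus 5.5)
Your proof is correct and is essentially the paper's. With $k_0=0$ (valid, since $Q=PR$ and the $P$-recurrence holds for all $n\geq 1$), your $h'=\sum_{k\geq 0}z^{(k)}$ is exactly the paper's $h=\sum_{r=0}^{m-1}h^{(r)}$, because $h^{(r)}=\sum_{j\geq 0}z^{(r+jm)}$, and it already satisfies $h'_n\geq t_m>0$ for $n<m$, so your Step~3 correction is harmless but unnecessary. The one real difference is the membership check: you split each head into complete relators of value $0$ plus a boundary piece of order at least $n-m$ and apply zero preservation directly, whereas the paper notes that the heads of $h^{(r)}$ of length $qm+r$ have value $u_{qm+r}$, whose valuation tends to infinity, and then invokes Lemma~\ref{lem:altdefH} to pass from $\limsup$ to $\lim$.
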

 \begin{proof} Let $P$ be the minimal polynomial of the
  recurrence defining $U$ and let $β$ be the leading root.
  Lemma~\ref{lem:dbetafinite} tells us that $d_β(1) = t_1⋯ t_m$ is finite,
  where $t_1 > 0$ and $t_m > 0$. Necessarily,
  $u_{n+m} = t_1u_{n+m-1} + ⋯ +t_mu_n$.  If $n=qm+r$ where $q ⩾ 1$ and
  $0 ⩽ r < m$ then $(u_n)_U = 10^{qm+r}$, so we also have
  \begin{displaymath}
   [(t_1⋯ t_{m-1}(t_m -1))^{q-1} t_1⋯ t_m0^r]_U = u_n.
  \end{displaymath}
  This implies that
  $h^{(r)}= \period{t_1⋯ t_{m-1}(t_m -1)}t_1⋯ t_m0^r ∈ ℍ_U$ for each
  $r ⩾ 0$, and so also $h ≔ ∑_{r = 0}^{m-1} h^{(r)} ∈ ℍ_U$. Furthermore,
  $h$ has strictly positive entries, as each entry~$h_i$ satisfies
  $h_i ⩾ t_m > 0$ if $0 ⩽ i ⩽ m-1$ and $h_i ⩾ t_1 > 0$ if $m ⩽ i$.
\end{proof}

We illustrate the statement of Lemma~\ref{lem:positive-H-entries} with
the following example.
\begin{example} 
  If $U$ is the n-bonacci numeration, then following the method in the
  proof of Lemma~\ref{lem:positive-H-entries}, we have that
  $\period{n-1}n ∈ ℍ_U$.
\end{example}

The next proposition tells us that each element of~$\uspace$, i.e., each
class of $𝔾/ℍ_U$, contains at least one $U$-normalised element.
 
\begin{proposition}\label{pro:trace}
  Let $U$ be a numeration that preserves zeros.  For each $g ∈ 𝔾$ there
  exists $g' ∈ \overline{ℕ}_U$ such that $g-g'∈ ℍ_U$.
\end{proposition}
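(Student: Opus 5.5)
First we reduce to nonnegative sequences. Given $g ∈ 𝔾$ with $\|g\|_∞ ⩽ c$, take the element $h ∈ ℍ_U$ with strictly positive entries from Lemma~\ref{lem:positive-H-entries}. Replace $g$ by $g + c\,h$. This lies in the same class modulo $ℍ_U$ and has nonnegative entries. So we may assume $g_n ⩾ 0$ for all $n$, and then $[g_{[0:n)}]_U ⩾ 0$ for every $n$, so $\unor(g_{[0:n)})$ is defined.

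Next we build $g'$ as a limit of normalised heads. For each $n$, let $w^{(n)} ≔ \unor(g_{[0:n)})$, a finite $U$-normalised sequence over the canonical digit set~$D$, viewed as an element of $𝔾$. By construction $g_{[0:n)} - w^{(n)}$ is an expansion of $0$. The $w^{(n)}$ take values in the finite set $D$, so a diagonal (compactness) argument gives a subsequence $n_j$ such that $w^{(n_j)}$ converges coordinatewise to some $g' ∈ D^ℕ ⊆ 𝔾$. Each head $g'_{[0:m)}$ equals $w^{(n_j)}_{[0:m)}$ for large $j$. A head of a normalised finite sequence is normalised, since the condition $∑_{i⩽ℓ} d_iu_i < u_{ℓ+1}$ is checked prefixwise from the bottom. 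Hence $g' ∈ \overline{ℕ}_U$.

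It remains to show $g - g' ∈ ℍ_U$. By Lemma~\ref{lem:altdefH} it suffices to find, for each $L$, some $m$ with $\uval((g-g')_{[0:m)}) ⩾ L$. Fix $m$ and pick $j$ with $n_j ⩾ m$ and $w^{(n_j)}_{[0:m)} = g'_{[0:m)}$. Then
\begin{displaymath}
[(g-g')_{[0:m)}]_U = [g_{[0:n_j)} - w^{(n_j)}]_U - [(g - w^{(n_j)})_{[m:n_j)}]_U = -[(g - w^{(n_j)})_{[m:n_j)}]_U .
\end{displaymath}
The sequence $(g - w^{(n_j)})_{[m:n_j)}$, padded with zeros below index $m$, has order at least $m$ and sup norm at most $c' ≔ c + \max D$ (after the shift by $c\,h$, $c$ is replaced by the new bound). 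Since $U$ preserves zeros, its $\uval$ is at least $m - K_{c'}$, and $\uval$ is insensitive to sign. Hence $\uval((g-g')_{[0:m)}) ⩾ m - K_{c'} → ∞$. This shows that the $\limsup$ is infinite, so $g - g' ∈ ℍ_U$.

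The step needing most care is the last one: identifying the tail difference as a bounded sequence of large order, so that preservation of zeros applies with a uniform constant. The bound on digits of $w^{(n)}$ is essential, and it holds because $D$ is finite by Lemma~\ref{lem:boundun}. A minor additional point is that the bound for $g + c\,h$ depends on $\|h\|_∞$, which is finite, so the constant remains uniform.
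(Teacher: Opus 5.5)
Your proof is correct and is essentially the paper's argument. Both shift $g$ by a multiple of the positive element of $\mathbb{H}_U$, take a coordinatewise limit point $g'$ of the normalised heads, and apply zero preservation to a bounded finite sequence of large order whose value is, up to sign, that of a head of $g-g'$. Two small differences are in your favour. Because $\nu_U$ is defined through $|\cdot|$, you only need $g\geqslant 0$ rather than the paper's $g\geqslant \max D$. And your bound holds for every $m$, so Lemma~\ref{lem:altdefH} is not actually needed.

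One slip needs fixing. The word $w^{(n_j)}$ can be longer than $n_j$ (e.g.\ $(2)_U=100$ for the Fibonacci numeration), so your displayed identity fails as written. The piece to use is $g_{[0:n_j)}-w^{(n_j)}$ with its entries of index below $m$ replaced by $0$, not the factor $(g-w^{(n_j)})_{[m:n_j)}$. This corrected piece still has order at least $m$ and sup norm at most $c+\max D$, so the rest of your argument goes through unchanged.
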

\begin{proof} 
  By Lemma~\ref{lem:positive-H-entries}, there exists $h ∈ ℍ_U$ with
  strictly positive entries. By replacing $g$ with $g+kh$ for some $k$, we
  can assume that each entry of $g$ is at least $\max D$, where $D$ is the
  canonical digit set.

  Since for each $n$, $\unor(g_{[0,n]}) $ is defined over the finite set
  $D$, we take $g'$ to be a limit point of the sequence
  $(\unor(g_{[0,n]}))_n$. We claim that $g' ∈ \overline{ℕ}_U$. Indeed for
  each $k$, $g'_{[0,k]}$ is the start of $\unor(g_{[0,n_k]})$ for some
  $n_k$ large enough. The claim now follows as the head of a sequence
  in~$ℕ_U$ belongs to~$ℕ_U$.

  It remains to show that $g-g'∈ ℍ_U$. Given $ℓ$ we need to show that there
  is $N_ℓ$ such that $\ord(\unor((g-g')_{[0,n]})) > ℓ$ if $n ⩾ N_ℓ$. If
  $\|g-g'\|_∞ ⩽ c$, then since zeros are preserved, there exists $K_c$
  such that $\ord(\unor(w))>\ord(w)-K_c $ for a finite sequence
  $w= w_0 ⋯ w_n$ with $|w_k|⩽ c$. There exists $N_ℓ$ such that
  $ \unor(g_{[0,N_ℓ]}) $ agrees with $g'$ on the indices $[0,ℓ+K_c]$.  Note
  that since the entries of $g$ are at least $\max D$, and since each entry
  of~$g'$ is at most $\max D$, $g-g'$ has non-negative entries.  This, with
  the fact that
  \begin{align*}
    [(g-g')_{[0,N_ℓ]}]_U & = [g_{[0,N_ℓ]}]_U - [g'_{[0,N_ℓ]}]_U \\
                       & = [\unor(g_{[0,N_ℓ]})]_U - [g'_{[0,N_ℓ]}]_U \\
                       & = [\unor(g_{[0,N_ℓ]})-g'_{[0,N_ℓ]}]_U
  \end{align*}
  implies that
  $\unor[(g-g')_{[0,N_ℓ]}]_U=\unor[\unor(g_{[0,N_ℓ]})-g'_{[0,N_ℓ]}]_U$ starts
  with $ℓ$ zeros.
\end{proof}

\begin{example}
  If the numeration $U = (u_n)_{n ⩾ 0}$ is given by $u = p^n$ for some
  possibly non-prime integer $p ⩾ 2$, then $\uspace = ℤ_p$, the additive
  group of the $p$-adic integers.  Indeed, it is clear that any sequence
  over the alphabet $\{0, …, p-1\}$ is normalised, and two normalised
  sequences $g$ and~$g'$ satisfy $g'-g ∈ ℍ_U$ if and only if $g = g'$.
\end{example}

With Proposition~\ref{pro:trace}, we can extend the notion of normalisation to
elements of~$𝔾$. Namely, we can think of the canonical surjection
\begin{equation}\label{eq:canonical-factor}
  π_{ℍ_U} : 𝔾 →  𝔾/ℍ_U = \uspace
\end{equation}
as being a \emph{normalisation} map: the normalisation of $g$ is the set of
elements in the equivalence class $π_{ℍ_U}(g)$ which are
$U$-normalised. Furthermore $π_{ℍ_U}(\overline{ℕ}_U) =\uspace$.

The following theorem tells us that there are many classes which contain
only one $U$-normalised element.  In particular, if a $U$-normalised
sequence~$g$ has its non-zero entries sufficiently far apart, then $g$ is
the unique $U$-normalised sequence in its class.  
\begin{theorem} \label{thm:single}
  Let $U$ be a Pisot numeration that preserves zeros. Then there exists a
  constant~$K$ with the following property.  If $g = (g_n)_{n ⩾ 0}$ is a
  $U$-normalised sequence such that $g_{[n,n+K)} = 0^K$ for infinitely
  many~$n$, then if $g'$ is $U$-normalised and $g' - g ∈ ℍ_U$ then
  $g' = g$.
\end{theorem}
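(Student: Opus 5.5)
Fix $c = 2\max D$ and let $K_c$ be the constant from Definition~\ref{def:preserve-zeros}. We will take $K$ somewhat larger than $K_c$; the precise value is fixed below. Suppose $g,g'$ are $U$-normalised, $g'-g∈ℍ_U$, and $g\neq g'$. Our aim is a contradiction. Let $j$ be the least index with $g_j\neq g'_j$. Since $g-g'∈ℍ_U$, Lemma~\ref{lem:altdefH} gives $\uval((g'-g)_{[0:n)})→∞$. Thus for all large $n$,
\[
[g'_{[0:n)}]_U-[g_{[0:n)}]_U=m_n,
\]
where $|m_n|$ has a normalised expansion beginning with as many zeros as we like. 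In particular $m_n$ is a multiple-like quantity supported in high positions. Note that $[g'_{[0:n)}]_U$ and $[g_{[0:n)}]_U$ are both normalised values, so each is $<u_n$ by the characterisation of $U$-normalised finite sequences.

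We use the hypothesis on $g$. Choose $n$ with $g_{[n,n+K)}=0^K$ and $n>j$ large enough that $\uval(m_N)>n+K$ for $N=n+K$. The two heads $g_{[0:N)}$ and $g'_{[0:N)}$ are normalised, and $[g_{[0:N)}]_U=[g_{[0:n)}]_U<u_n$. Their values differ by $m_N$, and $|m_N|$ has normalised expansion with order exceeding $n+K$, so $|m_N|\ge u_{n+K}$ unless $m_N=0$. Because $\beta$ is Pisot, Lemma~\ref{lem:boundun} gives $u_{n+K}>u_{n+1}+u_n$ once $K$ is large. The case $m_N<0$ then forces $[g'_{[0:N)}]_U<0$, which is impossible. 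In the case $m_N>0$, we get $[g'_{[0:N)}]_U\ge u_{n+K}$. This says the normalised word $g'_{[0:N)}$ has a nonzero digit in $[n,N)$, but it gives no contradiction yet.

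The real content is this second case, where $g'$ is "$g$ plus a large multiple of a high $u$", and we must rule it out using the normalisation of all heads. The natural quantity to work with is the defect $[g'_{[0:N')}]_U-[g_{[0:N')}]_U$ for a larger $N'$. Since $g'_{[0:N)}$ and $g_{[0:N)}$ have values differing by $m_N\ge u_{n+K}$, and the low parts below $n$ carry value less than $u_n$ for both, we deduce that $g'_{[0:n)}$ and $g_{[0:n)}$ have values $x,x'<u_n$. The congruence then has to be absorbed by the digits of $g'$ in $[n,N)$. Concretely, $[g'_{[n:N)}0^n]_U=m_N+x-x'$, and $|x-x'|<u_n$. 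Applying preservation of zeros to the finite sequence $g'_{[n:N)}0^n$ (entries at most $\max D$, order at least $n$) gives
\[
\uval\bigl(m_N+x-x'\bigr)\ge n-K_c .
\]
Similarly $\uval(m_N)\ge n+K$. Corollary~\ref{cor:zvalsum} therefore yields $\uval(x-x')\ge n-K_c-K_{2d}$. On the other hand $0<|x-x'|<u_n$, so $\uval(x-x')\le n-1$. This does not conflict by itself, so we have to be sharper. Since $g$ and $g'$ agree below $j$ and differ at $j$, we compare $x-x'$ with its order near $j$. Apply preservation of zeros directly to $g_{[0:n)}-g'_{[0:n)}$, whose order is $j$; this gives no upper bound, which is the difficulty.

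So the main obstacle is the lower-order digits. We instead use the freedom in the choice of $n$: infinitely many zero blocks exist. Fix a zero block at $n$ as above, giving $\uval(x-x')\ge n-K_c-K_{2d}$ with $x-x'=[ (g-g')_{[0:n)}]_U$. The difference $(g-g')_{[0:n)}$ is fixed below $j$. We then argue that $x=[g_{[0:n)}]_U$ and $x'=[g'_{[0:n)}]_U$ are both normalised values below $u_n$ and congruent modulo high $u$'s. The plan is to show that two distinct normalised words of length $n$ cannot differ by an amount whose expansion is supported in $[n-K',n)$ unless they first differ at a position at least $n-K''$. This uses the automaton of Theorem~\ref{thm:regularity-0} on the pair $(g_{[0:n)},g'_{[0:n)})$ read from the high end together with the sofic constraint of Theorem~\ref{thm:Pisot-properties}. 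It would force $j\ge n-K''$ for arbitrarily large $n$, contradicting $j$ fixed. Establishing this uniform bound $K''$, and choosing $K>K_c+K_{2d}+K''$, is the step I expect to be the crux.
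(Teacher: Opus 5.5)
\textbf{There is a genuine gap, in two places.}

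First, the choice of $n$ with $\nu_U(m_N)>n+K=N$ is impossible unless $m_N=0$. Both heads are normalised, so $0\le[g_{[0:N)}]_U,[g'_{[0:N)}]_U<u_N$. Hence $|m_N|<u_N$, and $\nu_U(m_N)\le N-1$ whenever $m_N\neq 0$. This is also why your ``case $m_N>0$'' would immediately contradict the normalisation of $g'_{[0:N)}$: the setup is vacuous, not informative. Lemma~\ref{lem:altdefH} only gives $\nu_U(m_N)\to\infty$, with no rate. What you actually need is the linear bound $\nu_U(h_{[0:N)})\ge N-K_{c+d^*}$ for $h\in\mathbb{H}_U$ with $\|h\|_\infty\le c$, where $d^*=\max D$ (Lemma~\ref{lem:valprefix}). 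That bound must be proved separately. With it, your estimate $\nu_U(x-x')\ge n-C$ can be recovered.

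Second, and decisively, your crux claim is false. In the Fibonacci numeration, take the two normalised representatives of $-1$: $a$ has $1$'s at all odd positions, and $b$ has $1$'s at all even positions $\ge 2$. For $n=2m+1$ we get $[a_{[0:n)}]_U=u_{2m}-1$ and $[b_{[0:n)}]_U=u_{2m+1}-1$. These differ by $u_{n-2}$, whose normalised expansion is a single $1$ at position $n-2$. Yet the two words first differ at position $1$, for every $n$. Now take $g=b_{[0:n)}$ followed by zeros and $g'$ any normalised extension of $a_{[0:n)}$. The constraints you place on the length-$n$ heads are all satisfied: both values lie below $u_n$, $\nu_U(x-x')\ge n-C$, and $g$ vanishes on $[n,n+K)$. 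Nevertheless $j=1$. The zero block of $g$ imposes nothing on the heads in front of it, so no automaton or sofic argument on those heads alone can close the proof.

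\textbf{The missing idea} is to look at $g'$ \emph{inside} the zero block of $g$, where $h=g'-g$ coincides with $g'$. Choose $n$ with $g_{n-1}\neq0$, and take $K\ge 3\max(K_{3d^*},L_{2d^*})$.

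If $g'$ vanishes on the last third of $[n,n+K)$, then $h$ has a zero block of length $\ge L_{2d^*}$ starting at some $p>n>j$. Lemma~\ref{lem:boundzb} then gives $[g'_{[0:p)}]_U=[g_{[0:p)}]_U$. By uniqueness of normalised expansions, $g'_{[0:p)}=g_{[0:p)}$, contradicting the definition of $j$.

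Otherwise $g'_\ell\neq 0$ for some $\ell\in[n+2K/3,n+K)$. Since $j<n$, we have $[g'_{[0:n)}]_U\neq[g_{[0:n)}]_U$. Lemma~\ref{lem:uvaldiff}, applied to $g'_{[0:\ell]}$ and $g_{[0:n)}$, gives $\nu_U(h_{[0:\ell]})\le n+K_{2d^*}$. But Lemma~\ref{lem:valprefix} gives $\nu_U(h_{[0:\ell]})\ge \ell+1-K_{3d^*}\ge n+K_{3d^*}+1$, a contradiction. This is the paper's argument.
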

  
In the case of the Fibonacci numeration~$U$, each $U$-normalised sequence
with infinitely many occurrences of the block~$00$ is the unique
$U$-normalised sequence in its class. Said differently, if $g$ and~$g'$ are
two different $U$-normalised sequences in the same class, then both $g$
and~$g'$ end with a tail $(01)^ℕ$.

In Lemmas~\ref{lem:valprefix} to~\ref{lem:uvaldiff}, we will use constants
$K_c$ given by Definition~\ref{def:preserve-zeros} of zero preservation.
Recall that if the finite sequence~$w$ satisfies $\|w\|_∞ ⩽ c$, then
$\uval(w) ⩾ \ord(w) - K_c$.  The following lemma states that the valuation
of the heads of a sequence in~$ℍ_U$ grows like the length of these heads.
\begin{lemma} \label{lem:valprefix}
  Let $U$ be a Pisot numeration that preserves zeros, let $D$ be the
  canonical digit set and let $d^* ≔ \max D$. For each~$c$, if $h ∈ ℍ_U$
  and $\|h\|_∞ ⩽ c$, then $\uval([h_{[0,n)}]_U) ⩾ n - K_{c+d^*}$ for each
  integer $n ⩾ 0$.
\end{lemma}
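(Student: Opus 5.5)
Fix $h ∈ ℍ_U$ with $\|h\|_∞ ⩽ c$ and an integer $n ⩾ 0$. The idea is to compare the head $h_{[0,n)}$ with a much longer head $h_{[0,m)}$, $m ⩾ n$, whose valuation is as large as we like because $h ∈ ℍ_U$. The difference between the two is the factor $w = h_{[n,m)}$ shifted into position $n$, and it has order at least $n$. Since $[h_{[0,n)}]_U = [h_{[0,m)}]_U - [w]_U$, the valuation of $h_{[0,n)}$ should be at least roughly $n$. Naively, Corollary~\ref{cor:zvalsum} would cost $K_c + K_{2d^*}$. To get the sharper constant $K_{c+d^*}$, I will not apply the corollary directly but instead build a single finite sequence with entries bounded by $c+d^*$ and order $⩾ n$, then apply Definition~\ref{def:preserve-zeros} once.

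\textbf{Step 1.} Choose $m ⩾ n$ so large that $\uval(h_{[0,m)}) ⩾ n$; this is possible since $\lim_m \uval(h_{[0,m)}) = ∞$. Let $N = [h_{[0,m)}]_U$ and let $v = (|N|)_U$. By definition of $\uval$, $v$ has entries in $D$, so $\|v\|_∞ ⩽ d^*$, and $v_k = 0$ for $k < n$. Define $\varepsilon = 1$ if $N ⩾ 0$ and $\varepsilon = -1$ otherwise, so that $N = \varepsilon [v]_U$.

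\textbf{Step 2.} Consider the finite sequence
\[ w ≔ \varepsilon v - (h_{[0,m)} - h_{[0,n)}), \]
computed componentwise. Two facts about $w$:
\begin{itemize}
\item Its value is $[w]_U = N - ([h_{[0,m)}]_U - [h_{[0,n)}]_U) = [h_{[0,n)}]_U$.
\item For $k < n$, both $v_k = 0$ and $(h_{[0,m)} - h_{[0,n)})_k = 0$, so $\ord(w) ⩾ n$.
\end{itemize}
Moreover each entry satisfies $|w_k| ⩽ d^* + c$, because $v$ is bounded by $d^*$ and $h$ by $c$, and at each position at most one of $h_k$ and $v_k$ contributes beyond the other. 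Note that $w$ may have entries of either sign; this is harmless, since Definition~\ref{def:preserve-zeros} allows arbitrary finite sequences over $ℤ$ with $\|w\|_∞ ⩽ c+d^*$.

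\textbf{Step 3.} Applying zero preservation with constant $K_{c+d^*}$ gives
\[ \uval([h_{[0,n)}]_U) = \uval(w) ⩾ \ord(w) - K_{c+d^*} ⩾ n - K_{c+d^*}. \]
The case $[h_{[0,n)}]_U = 0$ is trivial, since then the valuation is $∞$.

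\textbf{Main obstacle.} The only delicate point is the bookkeeping of signs and entry bounds in Step~2. In particular, $\|\cdot\|_∞$ as written in the paper is a $\sup$ of $g_n$ rather than of $|g_n|$, so I will read the hypothesis $\|h\|_∞ ⩽ c$ as $|h_k| ⩽ c$ for all $k$. Beyond that, the argument is a single application of Definition~\ref{def:preserve-zeros}.
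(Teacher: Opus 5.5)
Your proof is correct and is essentially the paper's argument. The paper also picks $m \geqslant n$ with $\uval([h_{[0,m)}]_U) \geqslant n$, sets $v = \unor([h_{[0,m)}]_U)$, and applies zero preservation once, with constant $K_{c+d^*}$, to the sequence $v - h_{[n,m)}0^n$; this is exactly your $w$ with $\varepsilon = 1$. Your sign $\varepsilon$ also explicitly covers the case $[h_{[0,m)}]_U < 0$, which the paper's use of $\unor$ passes over.
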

\begin{proof}
  Since $U$ preserves zeros, if the finite sequence~$w$ satisfies
  $\|w\|_∞ ⩽ c+d^*$, then $\uval([w]_U) ⩾ \ord(w) - K_{c+d^*}$.  Since $h$
  belongs to~$ℍ_U$ there exists an integer $m ⩾ n$ such that
  $\uval([h_{[0,m)}]_U) ⩾ n$.  Let us set $v = \unor([h_{[0,m)}]_U)$ and
  $w = h_{[n,m)}0^n\cdot$; since $[h_{[0,n)}]_U = [v-w]_U$, $\ord(v) ⩾ n$,
  $\ord(w) ⩾ n$ and $\|v-w\|_∞ ⩽ c+d^*$, one has
  $\uval([h_{[0,n)}]_U) = \uval([v-w]_U) ⩾ n - K_{c+d^*}$.
\end{proof}

The following lemma states that if a sequence $h$ in~$ℍ_U$ has a long
enough block of zeros, the value of the head of~$h$ before that block
is~$0$.
\begin{lemma} \label{lem:boundzb}
  Let $U$ be a Pisot numeration that preserves zeros. For each~$c$, there
  exists~$L_c$ such that if $h ∈ ℍ_U$, $\|h\|_∞ ⩽ c$ and
  $h_{[n,n+L_c)} = 0^{L_c}$ for some integer $n ⩾ 0$, then
  $[h_{[0,n)}]_U = 0$.
\end{lemma}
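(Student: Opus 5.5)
The plan is to compare two estimates for the integer $x ≔ [h_{[0,n)}]_U$: a lower bound on its valuation coming from Lemma~\ref{lem:valprefix}, and a size bound coming from Lemma~\ref{lem:boundun}. The key observation is that $h_{[n,n+L_c)} = 0^{L_c}$, so the head $h_{[0,n+L_c)}$ is just $h_{[0,n)}$ padded with zeros. Hence $[h_{[0,n+L_c)}]_U = [h_{[0,n)}]_U = x$.

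First, apply Lemma~\ref{lem:valprefix} to the head of length $n+L_c$. With $K ≔ K_{c+d^*}$, this gives
\begin{displaymath}
  \uval(x) ⩾ n+L_c-K .
\end{displaymath}
By definition $\uval(x) = \ord((|x|)_U)$. So if $x ≠ 0$, the normalised expansion of $|x|$ has all its nonzero digits at indices at least $m ≔ n+L_c-K$. Since all $u_i$ are positive and the digits are nonnegative with at least one positive, this forces $|x| ⩾ u_m$.

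Second, bound $|x|$ from above. Trivially $|x| ⩽ c∑_{i<n} u_i$. Lemma~\ref{lem:boundun} gives $u_i ⩽ λβ^i + K'$, so $∑_{i<n}u_i ⩽ Aβ^n$ for a constant $A$ depending only on $U$ (using $β>1$ to absorb the geometric sum and the linear term $nK'$). The same lemma gives $u_m ⩾ λβ^m - K'$. Hence there is $M_0$ with $u_m ⩾ (λ/2)β^m$ for all $m ⩾ M_0$.

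Now choose $L_c$ large enough that two conditions hold: $L_c - K ⩾ M_0$, and $β^{L_c-K} > 2cA/λ$. Then $m = n+L_c-K ⩾ M_0$ for every $n ⩾ 0$. Therefore, if $x ≠ 0$,
\begin{displaymath}
  |x| ⩾ u_m ⩾ \tfrac{λ}{2}β^{n}β^{L_c-K} > cAβ^n ⩾ |x|,
\end{displaymath}
which is a contradiction. So $x=0$.

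The choice of $L_c$ depends only on $c$ and $U$, via $K_{c+d^*}$, $λ$, $K'$ and $β$, and not on $n$ or $h$, which is what the statement requires. The case $n=0$ is trivial.

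I expect no serious obstacle. The only point needing care is to make the constants uniform in $n$. This is handled by the ratio $u_m/∑_{i<n}u_i$ depending essentially only on $m-n = L_c-K$, once $m$ is large enough that the exponential term dominates the error in Lemma~\ref{lem:boundun}.
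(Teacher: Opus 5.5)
Your proof is correct and follows essentially the same route as the paper. The paper also applies Lemma~\ref{lem:valprefix} to the head $h_{[0,n+L_c)}$, whose value equals $[h_{[0,n)}]_U$, and takes $L_c = K_{c+d^*}+C$ with $|\unor(g)| \leq |g|+C$ whenever $\|g\|_\infty \leq c$; it asserts this length bound without proof, whereas you derive the equivalent estimate from Lemma~\ref{lem:boundun} and correctly allow for a negative head value by working with $|x|$. One small fix: $|x| \geq u_m$ does not follow from positivity of the $u_i$ alone, so argue instead that $|x| \geq u_j \geq (\lambda/2)\beta^j \geq (\lambda/2)\beta^m$, where $j \geq m \geq M_0$ is the index of the lowest nonzero digit.
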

\begin{proof}
  For each constant~$c$, there exists a constant~$C$ such that, for each
  finite sequence~$g$, $\|g\|_∞ ⩽ c$ implies $|\unor(g)| ⩽ |g|+C$. Let $D$
  be the canonical digit set and let $d^* ≔ \max D$.  We claim that the
  constant $L_c ≔ K_{c+d^*} + C$ satisfies the required property.  Suppose
  that $h ∈ ℍ_U$, $\|h\|_∞ ⩽ c$ and $h_{[n,n+L_c)} = 0^{L_c}$.  Suppose
  also by contradiction that $[h_{[0,n)}]_U ≠ 0$.  Let $g$ be the finite
  sequence $\unor(h_{[0,n)})$.  Since $[g]_U ≠ 0$ and $|g| ⩽ n+C$, then
  $\ord(g) < n+C$.  Since $h_{[n,n+L_c) }= 0^{L_c}$,
  $\unor(h_{[0,n+L_c)}) = g$ and thus
  $\uval([h_{[0,n+L_c)}]_U) < n+C = n+L_c-K_{c+d^*}$, which is a
  contradiction to the statement of Lemma~\ref{lem:valprefix}.
\end{proof}

\begin{lemma} \label{lem:uvaldiff}
  For a Pisot numeration $U$ that preserves zeros, let $D$ be the canonical
  digit set and let $d^* ≔ \max D$.  Let $g$ and~$g'$ in~$𝔾$ be
  $U$-normalised sequences, and let $d' ≔ \deg(g')$ and $d ≔ \deg(g)$. If
  $d'> d$ and $[g'_d ⋯ g'_0]_U ≠ [g]_U$, then $\uval(g'-g) < d + K_{2d^*}$.
\end{lemma}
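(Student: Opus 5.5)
The plan is to split $g'-g$ into a "low" part, which carries the disagreement between $g'$ and $g$, and a "high" part, which is harmless. Both $g$ and $g'$ have finite degree, so they are finite sequences. Set $a ≔ [g'_d ⋯ g'_0]_U - [g]_U$, which is non-zero by hypothesis. Set $w ≔ g'_{[d+1:d']}0^{d+1}\cdot$, so that $[g'-g]_U = a + [w]_U$.

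\emph{Step 1: $\uval(a) ⩽ d$.} Both $g'_{[0:d]}$ and $g$ are $U$-normalised of degree at most~$d$, so both values lie in $[0,u_{d+1})$. Hence $0<|a|<u_{d+1}$. The normalised expansion $(|a|)_U$ is then non-zero of length at most $d+1$, so its order is at most~$d$.

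\emph{Step 2: $w$ is itself $U$-normalised with $\ord(w) ⩾ d+1$.} Its digits are non-negative, and for each $ℓ$ we have
\begin{displaymath}
  ∑_{i=d+1}^{ℓ} g'_iu_i ⩽ ∑_{i⩽ ℓ} g'_iu_i < u_{ℓ+1}.
\end{displaymath}
So $w=(\,[w]_U)_U$ and $\uval([w]_U)⩾ d+1$. Also $[g'-g]_U ⩾ u_{d'} - [g]_U > 0$, since $d'>d$ and $[g]_U<u_{d+1}⩽u_{d'}$ (for large indices; the small cases are checked directly). Thus $v ≔ (\,[g'-g]_U)_U$ is a normalised sequence with $\ord(v) = \uval(g'-g)$.

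\emph{Step 3: combine via zero preservation.} We have $|a| = ±[v - w]_U$, and $v-w$ has entries bounded in absolute value by $d^*$ (both have digits in $D$). By Definition~\ref{def:preserve-zeros},
\begin{displaymath}
  \uval(a) ⩾ \ord(v-w) - K_{d^*} ⩾ \min(\uval(g'-g),\, d+1) - K_{d^*}.
\end{displaymath}
Combined with Step~1, this gives $\min(\uval(g'-g), d+1) ⩽ d + K_{d^*}$. If the minimum is $\uval(g'-g)$, we obtain $\uval(g'-g) ⩽ d+K_{d^*}$. We then conclude using $K_{d^*} ⩽ K_{2d^*}$: the constants may be taken non-decreasing in~$c$, since the defining inequality for a larger $c$ covers a smaller one. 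Alternatively, one can phrase the sum directly through Corollary~\ref{cor:zvalsum} with the digit bound $2d^*$.

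The main obstacle is the off-by-one between the bound $⩽ d+K$ that this argument produces and the strict inequality $< d+K_{2d^*}$ in the statement. The case where the minimum equals $d+1$ also needs attention. To handle both, I would first try to sharpen Step~1 to a strict comparison. One route is to observe that $\uval(a)=d$ forces $|a|=u_d\cdot(\text{digit})$. Another is to use the slack between $K_{d^*}$ and $K_{2d^*}$: apply zero preservation to the sequence $(g'-g)$ itself, whose entries are bounded by $2d^*$, rather than to $v-w$. If neither works, I would absorb the $±1$ into the constant, which is all that the later application (Theorem~\ref{thm:single}) needs.
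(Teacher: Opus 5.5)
\textbf{Gap in Step~3.} Once you replace $\ord(w)$ by its lower bound $d+1$, the inequality $\min(\uval(g'-g),d+1)\le d+K_{d^*}$ carries no information. As soon as $K_{d^*}\ge 1$, it holds for every value of $\uval(g'-g)$. In the branch where the minimum is $\uval(g'-g)\le d+1$, the conclusion $\uval(g'-g)\le d+K_{d^*}$ is trivial anyway. So the branch you flag, where the minimum equals $d+1$ (i.e.\ $\uval(g'-g)\ge d+1$), is the entire content of the lemma, and none of your proposed repairs reaches it. Applying zero preservation to $g'-g$ itself gives a \emph{lower} bound on $\uval(g'-g)$, which is the wrong direction. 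Your other two suggestions only concern the $\pm1$.

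\textbf{The missing idea.} Keep $k=\ord(w)$, the position of the lowest non-zero digit of $g'$ above position~$d$. Your estimate then reads $\min(\uval(g'-g),k)\le d+K_{d^*}$, which settles the case $k>d+K_{d^*}$; this is the last case of the paper's proof. The remaining cases need a direct borrow argument, as in the paper's Cases~1 and~2a.
\begin{itemize}
\item If $a>0$: the sequence $g'_{d'}\cdots g'_{d+1}$ followed by $(a)_U$ padded to length $d+1$ is $U$-normalised, because $a\le[g'_{[0:d]}]_U$ and $g'$ is normalised. Hence it equals $\unor(g'-g)$, and $\uval(g'-g)=\uval(a)\le d$.
\item If $a<0$: then $0<[g'_{[0:k]}]_U-[g]_U<u_{k+1}$, and the same concatenation argument at position~$k$ gives $\uval(g'-g)\le k$.
\end{itemize}
Thus $\uval(g'-g)\le k$ always. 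Combined with your inequality, this gives $\uval(g'-g)\le d+K_{d^*}\le d+K_{2d^*}$.

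\textbf{On strictness.} Do not pursue it. The paper's own argument also only yields $\le d+K_{2d^*}$ (its Case~2a ends with $\le$), and that is all Theorem~\ref{thm:single} uses. With sharp constants the strict form is actually false: for $u_n=2^n$ one may take $K_c=0$, and $g=1$, $g'=10$ give $\uval(g'-g)=0=d+K_2$.
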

Note that the condition $d' > d$ implies $[g']_U > [g]_U$ and
thus $[g'-g]_U > 0$. 

\begin{proof}
  Let $g'' ≔ g'_d ⋯ g'_0$.  We distinguish two cases depending on whether
  $[g'']_U > [g]_U$ or $[g'']_U < [g]_U$.  Note that equality is excluded
  by hypothesis.

  In the first case $[g'']_U > [g]_U$, we claim that the stronger
  inequality $\uval(g'-g) ⩽ d$ holds.  Let $w$ be the sequence
  $\unor(g''-g)$; it has length at most $d$. If needed, we pad $w$ with
  final zeros so that it has length $d$. It is straightforward to check
  that $\unor(g'-g)= g'_{d'} ⋯ g'_{d+1}w$, and the claim follows.

  We now consider the second case $[g'']_U < [g]_U$.  Let
  $k ≔ d+1+\ord(g'_{d'}⋯ g'_{d+1})$.  By definition of~$k$, $g'_k ≠ 0$ and
  $g'_i = 0$ for $d+1 ⩽ i < k$.  We first suppose that $k ⩽ d + K_{2d^*}$.
  Now $[g'_{[0:k]}-g]_U > 0$ since $g$ is normalised.  Let
  $w ≔ \unor(g'_{[0:k]}-g)$; the word~$w$ satisfies
  $|w| ⩽ k+1 ⩽ d +1 + K_{2d^*}$.  It follows that
  $\uval(w) ⩽ d + K_{2d^*}$.  Since $g'$ was $U$-normalised,
  $\uval(g'-g) ⩽ d + K_{2d^*}$ because $\unor(g'-g)_{[0:|w|)} = w$.
  
  Finally suppose that $k > d + K_{2d^*}$.  This means that
  $\uval(g'_{d'} \cdots g'_{ d+1} 0^{d+1}) > d + K_{2d^*}$.  We have
  $[g-g'']_U = [g'_{d'}\cdots g'_{d+1}0^{d+1}]_U - [\unor(g'-g)]_U$. Now
  $\uval (g-g'') ⩽ d.$ Suppose by contradiction that
  $\uval(g'-g) ⩾ d + K_{2d^*}$. Then we have written $[g-g'']_U$ as a
  difference of two elements, each of whose valuation is strictly greater
  than $d+K_{2d^*}$. By the definition of $K_{2d^*}$, we have
  $\uval (g-g'') > d+K_{2d^*}-K_{2d^*} = d$, and this is a contradiction.
\end{proof}

\begin{proof}[Proof of Theorem~\ref{thm:single}]
  As before, let $D$ be the canonical digit set and let $d^* ≔ \max D$.  We
  claim that the constant
  $K = 3\max(K_{3d^*},L_{2d^*},K_{2d^*}) = 3\max(K_{3d^*},L_{2d^*})$, where
  $L_{2d^*}$ is the constant provided by Lemma~\ref{lem:boundzb},
  satisfies the required property.
  
  We consider a $U$-normalised sequence $g = (g_n)_{n ⩾ 0}$ satisfying the
  hypothesis of the theorem.  Suppose that $g' = (g'_n)_{n ⩾ 0}$ is a
  $U$-normalised sequence such that $h = g' - g ∈ ℍ_U$.  Since $g$ and $g'$
  are $U$-normalised, the sequence~$h$ satisfies $\|h\|_∞ ⩽ 2d^*$.  The
  statement of Theorem~\ref{thm:single} holds trivially if the sequence~$g$
  has finitely many non-zero entries.  Indeed, since both $g$ and~$g'$ are
  $U$-normalised, $g = g' = (n)_U$ where $n = [g]_U$.  Therefore, we can
  assume that $g_n ≠ 0$ infinitely often. The statement also holds
  trivially if $[g'_{[0:n)}]_U = [g_{[0:n)}]_U$ holds for infinitely
  many~$n$ since $g$ and~$g'$ are $U$-normalised.
  
  Therefore, we can assume that $[g'_{[0:n)}]_U = [g_{[0:n)}]_U$ holds for
  finitely many integers~$n$.  Lemma~\ref{lem:boundzb} now implies that the
  sequence~$h$ has finitely many blocks of zeros of length at
  least~$L_{2d^*}$.  Hence there exists a large enough integer~$N$ such
  that if $[g'_{[0:n)}]_U = [g_{[0:n)}]_U$ then $n < N$.  By hypothesis,
  there exists an integer~$n$ such that $n ⩾ N$, $g_{n-1} ≠ 0$ and
  $g_{[n,n+K)} = 0^K$.  By definition of~$N$, all occurrences
  of~$0^{L_{2d^*}}$ in~$h$ are contained in its prefix~$h_{[0:N)}$.  Since
  $K ⩾ 3L_{2d^*}$, there exists an integer~$ℓ$ such that $h_ℓ = g'_ℓ ≠ 0$
  and $n+2K/3 ⩽ ℓ < n+K$. By Lemma~\ref{lem:uvaldiff},
  $\uval(h_{[0:ℓ]}) = \uval(g'_{[0:ℓ]}-g_{[0:n)}) ⩽ n + K_{2d^*}$. By
  choice of $K$ and~$ℓ$, we have $ℓ -n ⩾ 2K/3 ⩾ 2K_{3d^*}$, and this
  implies that
  \begin{displaymath}
    \uval(h_{[0:ℓ]}) ⩽  n + K_{2d^*} ⩽  n + K_{3d^*} ⩽ ℓ - K_{3d^*},
  \end{displaymath}
  which contradicts Lemma~\ref{lem:valprefix}.
\end{proof}

We end this section with an example of a description of representations of
negative integers in $\uspace$.  Let $g$ be a finite sequence over $ℤ$ such
that $[g]_U ⩾ 0$.  It is easily verified that $g$ and $g' ≔ \unor(g)$ are
in the same class.  Indeed $h = g'-g$ belongs to $ℍ_U$ since
$[(g' - g)_{[0:n)}]_U = 0$ for $n ⩾ \max(|g|,|\unor(g)|)$.  It follows that
$ℕ_U$ is isomorphic to~$ℕ$ for addition and also subtraction when defined.

\begin{example}
  We describe the negative integers in~$\uspace$ when $U$ is the Tribonacci
  numeration.  Each negative integer has three distinct expansions
  in~$\overline{ℕ}_U$ which can be obtained as follows.  Suppose that $n$
  is fixed. For $k$ large enough, the most significant digits of the
  normalised expansion of $u_k-n$ are $110110⋯$.  One expansion of~$-n$ is
  then $\period{110}(u_k-n)_U$.  For $k$ and $k'$ not congruent modulo~$3$,
  one gets different expansions.  We give below the expansions for
  $n = -1,-2,-3$.
  \begin{displaymath}
    \begin{array}{|rr||rr||rr|} \hline
      \rule{0pt}{1.1\normalbaselineskip} 
      -1 & \period{110}0 & -2 & \period{110}00  & -3 & \period{110}010 \\
         & \period{101}0 &    & \period{101}00  &    & \period{101}000 \\
         & \period{110}  &    & \period{101}010 &    & \period{101}0110 \\
      \hline
    \end{array}
  \end{displaymath}
\end{example}

\section{From $\uspace$ to a  torus}
\label{sec:IsomorphisWithTorus}

So far we have just considered $𝔾$ and $\uspace$ as groups, but we can
endow each of them with a topology, as we describe in
Section~\ref{section:topology}.  In this section we show that there is a
continuous homomorphism from~$\uspace$ to a torus. We then show that if $β$
is unimodular, then $\uspace$ is continuously isomorphic to a torus.
   
Let $U$ be a Pisot numeration of degree $d$ whose minimal polynomial 
 has $ℓ+1$ real roots and $2k$ complex roots.  Denoting the leading root of
the  polynomial by~$β$, we label its algebraic conjugates
\begin{equation}\label{eq:conjugates}
  α_1,… ,α_{2k},α_{2k+1},…,α_{d-1}.
\end{equation}
The $2k$ complex Galois conjugates are $α_1,…,α_{2k}$ where
$α_{k+i} = \overline{α}_i$ for $1 ⩽ i ⩽ k$ while there are $ℓ ≔ d-1-2k$ 
real conjugates $α_{2k+1},…,α_{d-1}$. When needed, we will set $α_0 = β$.

We consider $ℂ^k × ℝ^ℓ$ as an $ℝ$-algebra with component-wise operations.
Let us define the vector~$\alphavec ∈ ℂ^k × ℝ^ℓ$ by
\begin{equation}\label{eq:norm}
  \alphavec ≔ (α_{k+1}, … ,α_{d-1})
  \quad\text{and}\quad
  \alphanorm ≔ \|\alphavec\|_∞;
\end{equation}
note that $\alphanorm < 1$ because $β$ is assumed to be a Pisot number.
Note that the conjugates $α_1,…, α_k$ have been removed.  By definition,
$\alphavec^i = (α_{k+1}^i, … ,α_{d-1}^i )$.

\begin{example}
  Recall the $k$-bonacci numerations from Example~\ref{ex:k-bonacci}.  If
  $U$ is the Fibonacci numeration, then $k = 0$, $ℓ = 1$ and the vector
  $\alphavec$ is the vector $(α_1) ∈ ℝ$ where $α_1 = (1-\sqrt{5})/2$, the
  conjugate of the golden ratio.
    
  If $U$ is the Tribonacci numeration, then $k = 1$, $ℓ = 0$ and the
  vector~$\alphavec$ is the vector $(α_2) ∈ ℂ$ where
  $α_0 = β,α_1,α_2 = \overline{α}_1$ are the roots of the polynomial
  $X^3-X^2-X-1$.
    
  If $U$ is the Tetrabonacci numeration, then $k = ℓ = 1$ and the
  vector~$\alphavec$ is the vector $(α_2,α_3) ∈ ℂ × ℝ$ where
  $α_0 = β, α_1, α_2 = \overline{α}_1, α_3$ are the roots of the polynomial
  $X^4-X^3-X^2-X-1$.
\end{example}

Define the group homomorphism $\rval: 𝔾→ ℂ^k× ℝ^ℓ$ by
\begin{equation} \label{eq:def-Phi}
  \rval((g_n)_{n⩾ 0}) ≔ ∑_{n ⩾ 0}g_n \alphavec^{n+d-1} .
\end{equation}
Note that $\rval$ is well defined as $\alphanorm < 1$ and the entries in
$(g_n)_{n ⩾ 0}$ are bounded.  Moreover, we have the following trivial
bound:
\begin{equation}\label{eqn:rvalbound}
  \|\rval(g)\|_∞ ⩽ \|g\|_∞\alphanorm^{\ord(g)+d-1} /(1-\alphanorm).
\end{equation}
Define the lattice $𝕃 ⊂ ℂ^k× ℝ^ℓ$ to be the set of $ℤ$-linear combinations of the vectors $\alphavec^i$ for $0⩽ i⩽ d-2 $, i.e., 
\begin{displaymath}
  𝕃 ≔   \{k_0\onevec+ k_1\alphavec + \cdots + k_{d-2}\alphavec^{d-2} : k_i\in ℤ\} ,
\end{displaymath}
where $\onevec$ is the vector $\alphavec^0 = (1,…,1) ∈ ℂ^k× ℝ^ℓ$.  The
following lemma states that the lattice~$𝕃$ has the same dimension as
$ℂ^k× ℝ^ℓ$, which ensures that the points in $𝕃$ are isolated.
\begin{lemma}\label{lem:torus}
  The $ℝ$-vector space spanned by~$𝕃$ is $ℂ^k× ℝ^ℓ$ and the quotient
  $(ℂ^k× ℝ^ℓ)/𝕃$ is thus isomorphic to the torus
  $ℝ^{2k+ℓ}/ℤ^{2k+ℓ} = 𝕋^{2k+ℓ}$.
\end{lemma}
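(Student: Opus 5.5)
We must show that the vectors $\onevec,\alphavec,\dots,\alphavec^{d-2}$ are linearly independent over $ℝ$ in $ℂ^k×ℝ^ℓ$. Since $ℂ^k×ℝ^ℓ$ has real dimension $2k+ℓ=d-1$, and there are exactly $d-1$ of these vectors, independence means they form an $ℝ$-basis. Then $𝕃$ is the $ℤ$-span of a basis, hence a full-rank lattice. The linear map sending the standard basis of $ℝ^{d-1}$ to this basis is an $ℝ$-linear isomorphism carrying $ℤ^{d-1}$ onto $𝕃$, so it induces $𝕋^{2k+ℓ}=ℝ^{d-1}/ℤ^{d-1}\cong (ℂ^k×ℝ^ℓ)/𝕃$.

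To prove independence, I would pass to complex coordinates. Identify $ℂ^k×ℝ^ℓ$ with a real subspace of $ℂ^{d-1}$ via
\[
(z_1,\dots,z_k,x_1,\dots,x_ℓ)\mapsto(\bar z_1,\dots,\bar z_k,z_1,\dots,z_k,x_1,\dots,x_ℓ).
\]
Since $α_{k+i}=\bar α_i$, this sends $\alphavec^j$ to $(α_1^j,\dots,α_{d-1}^j)$. The map is $ℝ$-linear and injective. Suppose $∑_{j=0}^{d-2}c_j\alphavec^j=0$ with real $c_j$. Then the real polynomial $Q(X)=∑_j c_jX^j$, of degree at most $d-2$, vanishes at all $d-1$ conjugates $α_1,\dots,α_{d-1}$. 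These conjugates are distinct because $P$ is irreducible over $ℚ$ and hence separable. The complex conjugate coordinates come for free, since $Q$ is real. So $Q$ has more roots than its degree, giving $Q=0$. Equivalently, the Vandermonde determinant in the distinct $α_i$ is nonzero.

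The only point needing care is this bookkeeping: the removed conjugates $α_1,\dots,α_k$ are still recovered as roots, because $Q$ has real coefficients and vanishes at their conjugates $α_{k+1},\dots,α_{2k}$. I expect no real obstacle beyond stating this clearly.
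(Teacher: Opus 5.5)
Your proposal is correct and follows essentially the same argument as the paper: a real polynomial of degree at most $d-2$ that vanishes at $\alpha_{k+1},\dots,\alpha_{d-1}$ must also vanish at their conjugates $\alpha_1,\dots,\alpha_k$, so it has $d-1$ roots and must be zero. You also spell out two points the paper leaves implicit, namely that the conjugates are distinct by separability and that the count $2k+\ell=d-1$ gives the explicit isomorphism with $\mathbb{R}^{d-1}/\mathbb{Z}^{d-1}$.
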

Note that it is important to consider $ℝ$-vector spaces as we want $ℂ^k$ to
be isomorphic to $ℝ^{2k}$.

\begin{proof}
  It suffices to prove that the $d-1$ vectors
  $\onevec, \alphavec, …, \alphavec^{d-2}$ are $ℝ$-linearly independent.
  Suppose by contradiction that there are $d-1$ real numbers
  $r_0, …, r_{d-2} ∈ ℝ$ such that $∑_{i=0}^{d-2}r_i\alphavec^i = 0$.  This
  equality means that the $d-k$ conjugates $α_{k+1},…,α_{d-1}$ are roots of
  the polynomial $Q(X) = ∑_{i=0}^{d-2}r_iX^i$.  Since the coefficients
  of~$Q(X)$ are real numbers, the conjugates $α_1,…,α_k$ are also roots
  of~$Q(X)$.  Since $Q(X)$ has degree at most $d-2$ and has at least $d-1$
  roots, each $r_i=0$ for $0⩽ i⩽ d-2$.
\end{proof}

The following proposition shows that the function $\rval: 𝔾→ ℂ^k× ℝ^ℓ$
induces a function from $\uspace$ to $(ℂ^k× ℝ^ℓ)/𝕃$. 
It is in this proposition that we first need  the initial  conditions
  of the numeration $U$ to be standard.

\begin{proposition} \label{pro:kernel}
  Let $U=(u_n)_{n⩾ 0 }$ be a standard Pisot numeration that preserves
  zeros, and let the function~$\rval$ be as defined in~\eqref{eq:def-Phi}.
  We have $\rval(ℍ_U) ⊂ 𝕃$ and~$\rval$ induces a
  function~$\tval: \uspace → (ℂ^k× ℝ^ℓ)/𝕃$.
\end{proposition}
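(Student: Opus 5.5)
The plan is to first handle finite sequences with $U$-value~$0$ by an algebraic computation that uses the standard initial conditions. We then pass to arbitrary $h ∈ ℍ_U$ by approximating with heads, and finish with the discreteness of~$𝕃$ given by Lemma~\ref{lem:torus}.

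\emph{Step 1: a trace formula for $u_n$.} With $α_0 = β$ and $P$ the minimal polynomial, the classical Lagrange/Euler identities give $∑_{j=0}^{d-1} α_j^m/P'(α_j) = 0$ for $0 ⩽ m ⩽ d-2$, and $=1$ for $m = d-1$. Hence the sequence $v_n ≔ ∑_j α_j^{n+d-1}/P'(α_j)$ satisfies the recurrence and the standard initial conditions $v_{-d+1} = ⋯ = v_{-1} = 0$, $v_0 = 1$. So $u_n = \operatorname{Tr}_{ℚ(β)/ℚ}\bigl(β^{n+d-1}/P'(β)\bigr)$. This is exactly where standardness is used.

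\emph{Step 2: finite sequences.} Let $w$ be a finite sequence over~$ℤ$ with $[w]_U = 0$, and put $x ≔ ∑_n w_n β^{n+d-1} ∈ ℤ[β]$. Since $P$ is monic, we can reduce to $x = ∑_{i=0}^{d-1} c_i β^i$ with $c_i ∈ ℤ$. Applying each Galois embedding, the same integer coefficients express $∑_n w_n α_j^{n+d-1}$ as $∑_i c_i α_j^i$ for every~$j$. By Step~1 and linearity of the trace,
\[
  0 = [w]_U = \operatorname{Tr}\bigl(x/P'(β)\bigr) = ∑_{i=0}^{d-1} c_i \operatorname{Tr}\bigl(β^i/P'(β)\bigr) = c_{d-1}.
\]
Therefore $\rval(w) = ∑_{i=0}^{d-2} c_i \alphavec^i ∈ 𝕃$.

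\emph{Step 3: infinite $h ∈ ℍ_U$.} For each~$n$ let $m_n ≔ [h_{[0:n)}]_U ∈ ℤ$ and $v_n ≔ (|m_n|)_U$, so that $\ord(v_n) = \uval(h_{[0:n)}) → ∞$. The finite sequence $h_{[0:n)} - \operatorname{sign}(m_n)\, v_n$ has $U$-value~$0$. By Step~2,
\[
  \rval(h_{[0:n)}) - \operatorname{sign}(m_n)\,\rval(v_n) ∈ 𝕃 .
\]
By the bound~\eqref{eqn:rvalbound} we have $\|\rval(v_n)\|_∞ ⩽ d^* \alphanorm^{\ord(v_n)+d-1}/(1-\alphanorm) → 0$, where $d^* ≔ \max D$. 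Likewise $\rval(h_{[0:n)}) → \rval(h)$, since the tail $h_{[n:∞)}$ has order at least~$n$. So $\rval(h)$ is a limit of points of~$𝕃$. Lemma~\ref{lem:torus} shows $𝕃$ is a full-rank lattice, hence discrete and closed, so $\rval(h) ∈ 𝕃$. Since $\rval$ is a group homomorphism with $\rval(ℍ_U) ⊂ 𝕃$, it descends to a well-defined homomorphism $\tval : 𝔾/ℍ_U → (ℂ^k × ℝ^ℓ)/𝕃$.

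I expect Step~1 to be the main point: identifying $u_n$ as a trace with exactly the exponent shift $n+d-1$ that appears in~\eqref{eq:def-Phi}. This is the only place the initial conditions enter, and it is what makes the coefficient $c_{d-1}$ vanish. Steps~2 and~3 are then routine.
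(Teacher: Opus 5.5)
Your proof is correct, and its overall architecture is the paper's. You first show that a finite sequence of $U$-value $0$ has $\rval$-image in $\mathbb{L}$, which is the paper's Corollary~\ref{prop:rval}. You then approximate $h\in\mathbb{H}_U$ by its heads and conclude with the discreteness of $\mathbb{L}$ from Lemma~\ref{lem:torus}.

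The genuine difference is how you establish the key fact: the coordinate of $\alpha^{d-1}$ in the reduction of $\sum_n w_n\alpha^{n+d-1}$ modulo $P$ equals $[w]_U$.
The paper proves this by an explicit induction on the exponent (Lemma~\ref{lem:restack_for_u_n} and Proposition~\ref{prop:restack_for_g}). It computes all the integer coordinates $V_i^{(n)}$ of $\alpha^n$ in the power basis and reads off $V_{d-1}^{(n)}=u_{n-d+1}$.
You instead use Euler's identities, i.e.\ the fact that $x\mapsto\operatorname{Tr}(x/P'(\beta))$ is the coordinate functional of $\beta^{d-1}$ in the power basis. Standardness of $U$ then becomes the one-line statement $u_n=\operatorname{Tr}(\beta^{n+d-1}/P'(\beta))$. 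Integrality of the remaining coordinates comes for free from $P$ being monic.

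Your route is shorter and explains conceptually why exactly the standard initial conditions make $c_{d-1}$ equal to the $U$-value. The paper's induction is entirely elementary and yields explicit formulas for every coordinate, at the cost of a longer computation.

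In Step~3 you are also a little more careful than the paper. The paper applies $\unor$ to $h_{[0:n)}$, which is only defined when $[h_{[0:n)}]_U\geq 0$. Your $\operatorname{sign}(m_n)\,(|m_n|)_U$ also covers heads of negative $U$-value. You also correctly bound the digits of $v_n$ by $\max D$ rather than by $\|h\|_\infty$.
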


Note that Messaoudi \cite{Messaoudi-1998} has explicitly computed an
automaton which outputs classes with more than one $U$-normalised element
for the Tribonacci $β$-numeration. More precisely, he computed an automaton
accepting pairs of $U$-normalised sequences being mapped to the same point
by~$\rval$, that is, their difference is mapped to~$0$ by~$\rval$.  Being
mapped to the same value by~$\rval$ is not the same as being in the same
class of~$\uspace$, as Proposition~\ref{pro:kernel} tells us that their
difference is mapped to the lattice~$𝕃$.

In order to prove Proposition~\ref{pro:kernel}, we need the following
results which establish a link between the function~$\rval$ and the
lattice~$𝕃$. Note that in Lemma~\ref{lem:restack_for_u_n},
Proposition~\ref{prop:restack_for_g} and Corollary~\ref{prop:rval} it is
crucial that we take standard initial conditions to define~$U$.  The
intuition behind the proof is that, given a finite sequence $g$, we
re-write it as a finite sequence $\widetilde{g}$ of length $d$, with
support in the entries with indices $0, -1, …, 1-d$, so that
$[g]_U= [\widetilde{g}]_U$. With these initial conditions, the $U$-value of
$g$ is determined by the entry $\widetilde{g}_0$.  Thus, if
$[g]_U = [g']_U$, and $u_0=1$ then we must have
$\widetilde{g}_0=[g]_U= [g']_U =\widetilde{g}'_0$, which is what leads to
$\rval(g-g') ∈ 𝕃$.

\begin{lemma}\label{lem:restack_for_u_n}
  Let $U=(u_n)_{n⩾ 0 }$ be a standard Pisot numeration, associated to
  $P(X) = X^d - a_{d-1}X^{d-1}-a_{d-2}X^{d-2}- ⋯ - a_1X -a_0$ and
  $α ∈ \{α_i: 0 ⩽ i ⩽ d-1\} $ be any one of its
  roots. For $n ⩾ d$,
  \begin{displaymath}
    α^n = ∑_{i=0}^{d-1} V_i^{(n)} α^{i}
    \qquad\text{where}\qquad
    V_{i}^{(n)} = ∑_{m=0}^i a_{i-m}u_{n-d-m} \text{ for } 0 ⩽ i ⩽ d-1.
  \end{displaymath}
  \end{lemma}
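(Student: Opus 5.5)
The plan is to argue by induction on $n ⩾ d$. Throughout I use only the relation $P(α)=0$, i.e.\ $α^d = ∑_{i=0}^{d-1} a_iα^i$, together with the recurrence $u_N = ∑_{j=1}^{d} a_{d-j}u_{N-j}$ for $N ⩾ 1$. For $N=1,\dots,d-1$ this recurrence involves the terms with non-positive indices, so it is here that the standard initial conditions $u_0=1$, $u_{-1}=⋯=u_{-d+1}=0$ are used.

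\emph{Base case $n=d$.} Here $V_i^{(d)} = ∑_{m=0}^{i} a_{i-m}u_{-m}$. With standard initial conditions only the term $m=0$ survives, so $V_i^{(d)} = a_i$. The claimed identity then reads $α^d = ∑_{i=0}^{d-1}a_iα^i$, which is exactly $P(α)=0$. This is the only place where the specific values of the initial conditions enter directly.

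\emph{Inductive step.} Assume the formula holds for some $n ⩾ d$ and multiply by $α$. This gives
\begin{displaymath}
  α^{n+1} = ∑_{i=1}^{d-1} V_{i-1}^{(n)}α^{i} + V_{d-1}^{(n)}α^d .
\end{displaymath}
Replacing $α^d$ by $∑_i a_iα^i$ and setting $V_{-1}^{(n)} ≔ 0$, we get
\begin{displaymath}
  α^{n+1}=∑_{i=0}^{d-1}\bigl(V_{i-1}^{(n)} + a_iV_{d-1}^{(n)}\bigr)α^i .
\end{displaymath}
It therefore suffices to verify the purely combinatorial identity
\begin{displaymath}
  V_i^{(n+1)} = V_{i-1}^{(n)} + a_i V_{d-1}^{(n)}, \qquad 0 ⩽ i ⩽ d-1 .
\end{displaymath}

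The key observation is that $V_{d-1}^{(n)} = ∑_{m=0}^{d-1} a_{d-1-m}u_{n-d-m}$ equals $u_{n-d+1}$. This is the recurrence at index $N = n-d+1$ after reindexing $j=m+1$. It is valid because $N ⩾ 1$ when $n ⩾ d$, using the negative-index initial values if $N<d$.

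Next, split off the $m=0$ term of $V_i^{(n+1)}$:
\begin{displaymath}
  V_i^{(n+1)} = a_iu_{n+1-d} + ∑_{m=1}^{i}a_{i-m}u_{n-d-(m-1)} .
\end{displaymath}
The first term is $a_iV_{d-1}^{(n)}$ by the observation above. The sum, after the shift $m' = m-1$, is exactly $V_{i-1}^{(n)}$; for $i=0$ it is empty, matching $V_{-1}^{(n)}=0$. This completes the induction.

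The only delicate point is the identification $V_{d-1}^{(n)} = u_{n-d+1}$ at the smallest indices. For those $n$ the recurrence reaches back to $u_0,u_{-1},\dots,u_{-d+1}$, so one must check that the recurrence is assumed to hold from $N=1$ onwards with these initial values. Everything else is bookkeeping of indices.
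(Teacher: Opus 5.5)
Your proof is correct and is essentially the paper's proof: induction on $n$, with the base case $V_i^{(d)} = a_i$ coming from the standard initial conditions, and the inductive step obtained by multiplying by $\alpha$, substituting $\alpha^d$ via $P(\alpha)=0$, and using the recurrence to get $V_{d-1}^{(n)} = u_{n-d+1}$. The only cosmetic difference is your convention $V_{-1}^{(n)} = 0$, which folds the $i=0$ coefficient into the general case, whereas the paper handles it separately.
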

  Note that this implies
  $V_{d-1}^{(n)} = ∑_{m=0}^{d-1} a_{d-1-m}u_{n-d-m} = u_{n-d+1}$. 

\begin{proof}
  We proceed by induction on $n$.
  \paragraph{Base case} Consider $n=d$, since $P(α) = 0$, we have
  $α^d = a_{d-1} α^{d-1} + a_{d-2} α^{d-2} + ⋯ + a_{0}$ and also $u_0 = 1$
  and $u_{-1} = u_{-2} = ⋯ = 0$ by definition. Therefore, for each
  $i ∈ [0, d-1]$ we can explicitly write $V_i^{(d)} = a_i$ and so
  $∑_{i=0}^{d-1} V_i^{(n)} α^i = a_{d-1} α^{d-1} + a_{d-2} α^{d-2} + ⋯ +
  a_{0} = α ^n $ holds for $n=d$.
  \paragraph{Induction step} For some $n ⩾ d$, assume that
  $α^n = ∑_{i=0}^{d-1} V_i^{(n)} α^i$ where
  $V_{i}^{(n)} = ∑_{m =0}^{i} a_{i-m}u_{n-d-m}$ for $0 ⩽ i ⩽ d-1$.  Now for
  $α^{n+1}$, using the induction hypothesis and
  $P(α) = α^d - ∑_{i=0}^{d-1} a_iα^{i} = 0$, we have
  \begin{align*}
    α^{n+1} & = ∑_{i=0}^{d-1} V_i^{(n)} α^{i+1}  \\
           & = ∑_{i=0}^{d-1} V_i^{(n)} α^{i+1}  +
               V_{d-1}^{(n)} \left(∑_{i=0}^{d-1} a_iα^{i} - α^d\right) \\
           & = a_0 V_{d-1}^{(n)} +
               ∑_{i=1}^{d-1} (a_i V_{d-1}^{(n)}  +  V_{i-1}^{(n)}) α^{i}     
  \end{align*}
  where $V_{d-1}^{(n)} = ∑_{m=0}^{d-1} a_{d-1-m}u_{n-d-m} = u_{n+1-d}$ from
  the recurrence relation for~$U$.  But from the definitions we see that
  $a_0V_{d-1}^{(n)} = a_0u_{n+1-d} = V_0^{(n+1)}$, and for
  $1⩽ i ⩽ d-1$, and
  \begin{displaymath}
    a_i V_{d-1}^{(n)} + V_{i-1}^{(n)}  =
    a_iu_{n+1-d} + ∑_{m=0}^{i-1} a_{i-1-m}u_{n-d-m} = V_i^{(n+1)} ,
  \end{displaymath}
  and therefore $α^{n+1} = ∑_{i=0}^{d-1} V_i^{(n+1)} α^i$, from which the
  result follows.
\end{proof}
\begin{proposition}\label{prop:restack_for_g}
  Let $U=(u_n)_{n⩾ 0 }$ be a standard Pisot numeration, associated to
  $P(X) = X^d - a_{d-1}X^{d-1}-a_{d-2}X^{d-2}- ⋯ - a_1X -a_0$. For each
  finite sequence $g = g_n ⋯ g_0 ∈ ℤ^*$ and each of the roots in
  $α ∈ \{α_i: 0 ⩽ i ⩽ d-1\}$,
  \begin{displaymath}
    ∑_{k=0}^n g_kα^{k+d-1} = ∑_{k = 0}^{d-1}A_k^{(g)} α^k
  \end{displaymath}
  where each $A_k^{(g)} ∈ ℤ$ and $A_{d-1}^{(g)} = [g]_U$.
\end{proposition}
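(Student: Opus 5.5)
By linearity in $g$, it suffices to treat a single term $g = 0^{n-k}1 0^{k}$, i.e., to show that for each $k ⩾ 0$ we can write
\begin{displaymath}
  α^{k+d-1} = ∑_{i=0}^{d-1} B_i^{(k)} α^i \quad\text{with } B_i^{(k)} ∈ ℤ \text{ and } B_{d-1}^{(k)} = u_k .
\end{displaymath}
We then take $A_i^{(g)} ≔ ∑_{k=0}^n g_k B_i^{(k)}$. These are integers, and $A_{d-1}^{(g)} = ∑_k g_k u_k = [g]_U$, which is the claim. The coefficients $B_i^{(k)}$ depend only on $k$ and on the integers $a_j$, not on the choice of root $α$. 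So the same identity holds simultaneously for all roots $α_0, …, α_{d-1}$.

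To obtain the single-term identity, I split according to the exponent $N ≔ k+d-1$.

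\textbf{Case $N ⩾ d$} (i.e. $k ⩾ 1$). Lemma~\ref{lem:restack_for_u_n} applies directly with $n = N$. It gives $B_i^{(k)} = V_i^{(N)} = ∑_{m=0}^i a_{i-m}u_{N-d-m}$, which are integers because the $a_j$ and all $u_j$ (including the standard initial values $u_0=1$, $u_{-1}=⋯=u_{1-d}=0$) are integers. The remark after that lemma gives $V_{d-1}^{(N)} = u_{N-d+1} = u_k$, by the recurrence applied at index $N-d+1 = k ⩾ 1$.

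\textbf{Case $N = d-1$} (i.e. $k = 0$). Here $α^{d-1}$ is already in reduced form. We take $B_{d-1}^{(0)} = 1 = u_0$ and $B_i^{(0)} = 0$ for $i < d-1$.

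The only point needing care is the edge case. One must check that the remark $V_{d-1}^{(n)} = u_{n-d+1}$ is valid exactly for $n ⩾ d$. This uses the recurrence $u_{n-d+1} = ∑_{m=0}^{d-1} a_{d-1-m} u_{n-d-m}$ at indices $n-d+1 ⩾ 1$, which holds by definition of $U$. It is also where the standard initial conditions enter: they make the base case $V^{(d)}_i = a_i$ of Lemma~\ref{lem:restack_for_u_n} consistent. Beyond that, the argument is bookkeeping via linearity, so I expect no genuine obstacle.
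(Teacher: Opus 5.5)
Your proof is correct and is essentially the paper's argument: expand each $\alpha^{k+d-1}$ using Lemma~\ref{lem:restack_for_u_n}, collect the coefficients by linearity in $g$, and read off $A_{d-1}^{(g)} = \sum_k g_k u_k = [g]_U$ from $V_{d-1}^{(k+d-1)} = u_k$. Treating the $k=0$ term separately is a small improvement in care, since the paper tacitly applies the lemma at exponent $d-1$, just outside its stated range $n \geqslant d$.
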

The proof of this proposition follows from
  Lemma~\ref{lem:restack_for_u_n} and linearity. 
\begin{proof}
  Using Lemma~\ref{lem:restack_for_u_n} we have
  $α^n = ∑_{i=0}^{d-1} V_i^{(n)} α^{i}$ and so
  \begin{displaymath}
    ∑_{k=0}^n   g_k α^{k+d-1} = 
    ∑_{k=0}^n \left(∑_{i=0}^{d-1} V_i^{(k+d-1)} α^i \right) g_k=
    ∑_{i=0}^{d-1} \left(∑_{k=0}^{n}g_k V_i^{(k+d-1)}\right) α^i.
  \end{displaymath}	
  But $V_{d-1}^{(ℓ)} = u_{ℓ + 1-d}$, so
  $∑_{i=0}^n g_i V_{d-1}^{(d-1+i)} = ∑_{i=0}^ng_i u_i = [g]_U$.  Thus
  \begin{align*}
    ∑_{k=0}^n g_k α^{k+d-1} & = [g]_U α^{d-1} +
           ∑_{i=0}^{d-2}\left(∑_{j=0}^{n} g_jV_i^{(j+d-1)} \right) α^{i} \\
    & = [g]_U α^{d-1} + ∑_{i=0}^{d-2}A_i^{(g)}α^i
  \end{align*}
  with $A_i ∈ ℤ$ for $i ∈ [0,d-2]$.
\end{proof}
Recalling the definition of $𝕃$, we immediately have the following corollary.
\begin{corollary} \label{prop:rval}
  Let $U=(u_n)_{n⩾ 0 }$ be a standard Pisot numeration.  If $w$ and~$w'$
  are two finite sequences over~$ℤ$ such that $[w]_U = [w']_U$, then
  $\rval(w) - \rval(w') ∈ 𝕃$.
\end{corollary}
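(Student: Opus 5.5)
The plan is to apply Proposition~\ref{prop:restack_for_g} to $w$ and to $w'$ separately, and then pass from a statement about a single root $α$ to a statement about the vector $\alphavec$. Both $w$ and $w'$ are finite sequences over $ℤ$, so Proposition~\ref{prop:restack_for_g} gives integers $A_k^{(w)}$ and $A_k^{(w')}$, for $0 ⩽ k ⩽ d-1$, such that for every root $α$ of $P$,
\begin{displaymath}
  ∑_{j} w_jα^{j+d-1} = ∑_{k=0}^{d-1}A_k^{(w)}α^k
  \quad\text{and}\quad
  ∑_{j} w'_jα^{j+d-1} = ∑_{k=0}^{d-1}A_k^{(w')}α^k,
\end{displaymath}
with $A_{d-1}^{(w)} = [w]_U$ and $A_{d-1}^{(w')} = [w']_U$.

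The key point is that the integers $A_k^{(w)}$ are built from the $V_i^{(n)}$ of Lemma~\ref{lem:restack_for_u_n}. These depend only on the recurrence coefficients $a_i$ and on $U$, and not on the particular root $α$. So the same integer coefficients work simultaneously for every conjugate $α_{k+1},…,α_{d-1}$. This lets us read the identity componentwise in $ℂ^k × ℝ^ℓ$:
\begin{displaymath}
  \rval(w) = ∑_{k=0}^{d-1} A_k^{(w)}\alphavec^k ,
\end{displaymath}
and likewise for $w'$. I will check this point explicitly from the formula for $A_i^{(g)}$ in the proof of Proposition~\ref{prop:restack_for_g}.

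Subtracting the two expressions gives
\begin{displaymath}
  \rval(w)-\rval(w') = \bigl([w]_U-[w']_U\bigr)\alphavec^{d-1} + ∑_{k=0}^{d-2}\bigl(A_k^{(w)}-A_k^{(w')}\bigr)\alphavec^k.
\end{displaymath}
The hypothesis $[w]_U=[w']_U$ kills the $\alphavec^{d-1}$ term. What remains is an integer combination of $\onevec,\alphavec,…,\alphavec^{d-2}$, which lies in $𝕃$ by definition.

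The only step needing care is this uniformity of the coefficients across all roots. It holds because Lemma~\ref{lem:restack_for_u_n} uses nothing about $α$ beyond $P(α)=0$. There is no real obstacle beyond this bookkeeping.
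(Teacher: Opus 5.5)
Your proof is correct and is essentially the paper's own argument: the paper derives this corollary immediately from Proposition~\ref{prop:restack_for_g} and the definition of $\mathbb{L}$. You also make explicit the one point the paper leaves implicit, namely that the integers $A_i^{(g)}=\sum_j g_jV_i^{(j+d-1)}$ do not depend on the chosen root, so the identity holds componentwise for $\alphavec$ and the $\alphavec^{d-1}$ coefficient $[w]_U-[w']_U$ vanishes.
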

\begin{proof}[Proof of Proposition~\ref{pro:kernel}]
  Recall $(n)_U ∈ \{0,…,\max D\}^*$ for each integer~$n ⩾ 0$, where $D$ is
  the canonical digit set.  Let $h = (h_n)_{n ⩾ 0}$ be an element of~$ℍ_U$.
  Let $c$ be an integer such that $\|h\|_∞ ⩽ c$ and let $ε > 0$.  We claim
  that there exists a point $p ∈ 𝕃$ such that $\|\rval(h)-p\|_∞ < ε$. We
  choose an integer~$m$ such that $2c\alphanorm^m < ε(1-\alphanorm)$.  By
  definition of~$ℍ_U$, there exists an integer~$n$ such that $n > m$ and
  $\uval(h_{[0:n)}) > m$.  Let $g = h_{[n:∞)}$ be the tail of~$h$ starting
  at position~$n$. Then
  \begin{align*}
    \rval(h) & = \rval(h_{[0:n)}) + \alphavec^n \rval(g) \\
             & = p + \rval(\unor(h_{[0:n)})) + \alphavec^n\rval(g)
  \end{align*} 
  for some element~$p ∈ 𝕃$ by Corollary~\ref{prop:rval} since $[\unor(h_{[0:n)})]_U=[h_{[0:n)}]_U
  $.  But since $\uval(h_{[0:n)})  > m$, we have
  \begin{displaymath}
    \|\rval(h)-p\|_∞ ⩽ c(\alphanorm^m+\alphanorm^n)/(1-\alphanorm)
  \end{displaymath}
  The choice of $m$ and~$n$ guarantees that $\|\rval(h)-p\|_∞ < ε$.  Since
  this is true for each $ε > 0$ and $𝕃$ is made of isolated points,
  $\rval(h) ∈ 𝕃$ is proved and the inclusion $\rval(ℍ_U) ⊂ 𝕃$ holds.
\end{proof}

We summarise with the following commutative diagram where $π_{ℍ_U}$ and
$π_𝕃$ are the canonical maps from~$𝔾$ to~$\uspace = 𝔾/ℍ_U$ and
from~$ ℂ^k × ℝ^ℓ $ to~$ (ℂ^k × ℝ^ℓ)/ 𝕃 ≅ 𝕋^{d-1}$ respectively.

\begin{equation}\label{diag:commutative}
  \begin{tikzcd}
    𝔾 \arrow{r}{\rval} \arrow[swap]{d}{π_{ℍ_U}}
    & ℂ^k × ℝ^ℓ \arrow{d}{π_𝕃} \\
      \uspace = 𝔾/ℍ_U \arrow{r}{\tval}&  (ℂ^k× ℝ^ℓ)/𝕃 ≅ 𝕋^{d-1}
  \end{tikzcd}
\end{equation}

With $\alphavec$ as in~\eqref{eq:norm}, define the rotation
$ρ: (ℂ^k× ℝ^ℓ)/𝕃 → (ℂ^k× ℝ^ℓ)/𝕃$ by
\begin{equation}\label{def:rho}
  ρ(\mathbf{z})≔ \mathbf{z} + \alphavec^{d-1} \bmod 𝕃.
\end{equation}

The following lemma generalises what Rauzy did in \cite{Rauzy-1982},
although we work in $ℂ$ rather than $ℝ^2$, which in our opinion is more
natural.
\begin{lemma}\label{lem:kronecker-complex}
  The map $ρ$ has dense orbits in $(ℂ^k× ℝ^ℓ)/𝕃$.
\end{lemma}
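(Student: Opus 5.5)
The plan is to reduce the density of $ρ$-orbits to a Kronecker-type statement and then use the fact that the minimal polynomial $P$ is irreducible. Since $ρ$ is a translation on the compact abelian group $𝕋 ≔ (ℂ^k× ℝ^ℓ)/𝕃 ≅ 𝕋^{d-1}$ (Lemma~\ref{lem:torus}), the orbit of any point $\mathbf z$ is $\mathbf z + \{n\alphavec^{d-1}\}$. So it suffices to show that the cyclic subgroup generated by $\alphavec^{d-1} \bmod 𝕃$ is dense.

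Its closure $H$ is a closed subgroup of $𝕋$. If $H ≠ 𝕋$, Pontryagin duality gives a nontrivial character $χ$ of $𝕋$ vanishing on $H$. Characters of $(ℂ^k× ℝ^ℓ)/𝕃$ are of the form $\mathbf z ↦ \exp(2πi\, L(\mathbf z))$, where $L : ℂ^k× ℝ^ℓ → ℝ$ is $ℝ$-linear and $L(𝕃) ⊂ ℤ$. We therefore aim to show: if $L$ is $ℝ$-linear, integer-valued on $𝕃$, and $L(\alphavec^{d-1}) ∈ ℤ$, then $L = 0$.

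Write $L(\mathbf z) = \operatorname{Re}\bigl(∑_{j=k+1}^{2k} c_j z_j\bigr) + ∑_{j=2k+1}^{d-1} c_j z_j$, with $c_j ∈ ℂ$ for the complex coordinates and $c_j ∈ ℝ$ for the real ones. Using $α_{j-k} = \overline{α}_j$, rewrite $L(\mathbf z)$ symmetrically as $∑_{j=1}^{d-1} γ_j z_j$, where $z_{j-k} ≔ \overline{z_j}$, $γ_j = c_j/2$, $γ_{j-k} = \overline{c_j}/2$ for $k<j⩽2k$, and $γ_j = c_j$ for the real coordinates. Then
\[
  L(\alphavec^i) = ∑_{j=1}^{d-1} γ_j α_j^i .
\]
The hypotheses give integers $m_i ≔ L(\alphavec^i)$ for $0 ⩽ i ⩽ d-1$: for $i ⩽ d-2$ because $\alphavec^i ∈ 𝕃$, and for $i = d-1$ by assumption on $χ$.

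The key step is to push integrality to all $i$. Since each $α_j$ is a root of $P$, the sequence $s_i ≔ ∑_j γ_j α_j^i$ satisfies the integer recurrence $s_{i+d} = ∑ a_r s_{i+r}$. However, only $d-1$ roots are involved, so $s$ actually satisfies the order-$(d-1)$ recurrence with characteristic polynomial $Q(X) ≔ P(X)/(X-β)$. Because $β ∉ ℚ$, $Q$ does not have rational coefficients. Setting $γ_0 ≔ 0$ and viewing $s_i = ∑_{j=0}^{d-1} γ_j α_j^i$, the sequence $s$ has integer initial values $m_0,\dots,m_{d-1}$, hence by the $P$-recurrence it is integer-valued for all $i ⩾ 0$. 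Now $|s_i| ⩽ (\sum|γ_j|)\alphanorm^i → 0$, so $s_i = 0$ for all large $i$.

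Since $P$ has distinct roots, the Vandermonde argument (the vectors $(α_j^i)_j$ for $d$ consecutive $i$ are independent) forces all $γ_j = 0$, so $L = 0$. Hence $H = 𝕋$. In effect this is the standard route: an integer sequence with a Pisot-type decay vanishes, which is where the irreducibility of $P$ and the Pisot hypothesis enter.

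The main obstacle is the bookkeeping in identifying characters with the symmetric coefficient vector $(γ_j)$, i.e.\ checking that $ℝ$-linear functionals on $ℂ^k × ℝ^ℓ$ correspond exactly to conjugation-symmetric $γ$. A second point to check is that $m_{d-1} ∈ ℤ$ really follows from $χ$ vanishing on $H$: $χ(\alphavec^{d-1} \bmod 𝕃) = 1$ means precisely $L(\alphavec^{d-1}) ∈ ℤ$.
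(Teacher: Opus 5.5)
Your proof is correct, but it takes a different route from the paper's.

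\textbf{The paper's route.} The paper reduces directly to the classical Kronecker theorem. It writes $Q(X)=P(X)/(X-\beta)=X^{d-1}-c_{d-2}X^{d-2}-\cdots-c_0$, so that $\alphavec^{d-1}=c_0\onevec+c_1\alphavec+\cdots+c_{d-2}\alphavec^{d-2}$ with real coefficients $c_i\in\mathbb{Z}[\beta]$. In the basis $\onevec,\dots,\alphavec^{d-2}$ of $\mathbb{L}$, the map $\rho$ is then the translation by $(c_0,\dots,c_{d-2})$ on $\mathbb{R}^{d-1}/\mathbb{Z}^{d-1}$. Density amounts to the $\mathbb{Q}$-linear independence of $1,c_0,\dots,c_{d-2}$. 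This holds because $c_i$ is a polynomial in $\beta$ of degree exactly $d-1-i$, which gives a triangular change of basis from $1,\beta,\dots,\beta^{d-1}$, and because $P$ is irreducible.

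\textbf{Your route.} You prove the relevant Kronecker-type statement intrinsically, by duality. A character trivial on the orbit closure yields the integer sequence $s_i=L(\alphavec^i)$, which satisfies the $P$-recurrence. The bound $|\alpha_j|<1$ forces it to vanish eventually, and a Vandermonde argument then gives $L=0$.

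\textbf{Comparison.} Your approach never computes the coefficients of $Q$. It also avoids the degree bookkeeping, which the paper states only as ``degree at most'' even though exactness of the degrees is what is needed. The cost is that you spend the Pisot hypothesis, whereas the paper's argument gives density for any irreducible $P$. You could avoid the Pisot hypothesis too: the coefficients of an integer sequence satisfying an irreducible recurrence are of the form $F(\alpha_j)$ for a single $F\in\mathbb{Q}(X)$, so $\gamma_0=0$ forces all $\gamma_j=0$.

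Your aside that $Q$ has non-rational coefficients plays no role and can be dropped.
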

\begin{proof}
  Set $Q(X)≔ P(X)/(X-β)$; then
  $Q(X) = X^{d-1}-c_{d-2}X^{d-2} - ⋯ - c_1X - c_0$ where $c_i∈ ℤ[β]$ has
  degree at most $d-i-1$ in~$β$.  By definition we have $Q(α_i) = 0$ for
  $1 ⩽ i ⩽  k+ℓ$ so
  \begin{displaymath}
    \alphavec^{d-1} = c_0\onevec + c_1\alphavec + ⋯ +  c_{d-2}\alphavec^{d-2}.
  \end{displaymath}
  Then
  \begin{displaymath}
    n\alphavec^{d-1}  ≡ (nc_0 \bmod 1)\onevec +(nc_1 \bmod 1) \alphavec + ⋯
    +(nc_{d-2} \bmod 1)\alphavec^{d-2} \mod 𝕃
  \end{displaymath}
  Thus the orbit of any $\mathbf{z} ∈ (ℂ^k× ℝ^ℓ)/𝕃 $ under
  $+\alphavec^{d-1}$ is dense in $(ℂ^k× ℝ^ℓ)/𝕃$ if and only if the orbit of
  the map
  \begin{displaymath}
    (x_0, …, x_{d-2} ) ∈  ℝ^{d-1}/ℤ^{d-1}
    ↦
    ((x_0+ c_0) \bmod 1 , …, (x_{d-2}+ c_{d-2}) \bmod 1 )
  \end{displaymath}
  is dense in the torus $ℝ^{d-1}/ℤ^{d-1}$. By Kronecker's theorem, this is
  true if and only if the $d$ numbers $1, c_0, …, c_{d-2}$
  are linearly independent over~$ℤ$.  Since each $c_i ∈ ℤ[β]$ has degree at
  most $d-1$ and $P$ is by definition irreducible, the result follows.
\end{proof}

\begin{corollary} \label{cor:dense}
  Let $U=(u_n)_{n⩾ 0 }$ be a standard Pisot numeration that preserves
  zeros, and let $\tval$ be as defined in~\eqref{diag:commutative}.  The
  set $\tval(π_{ℍ_U}(ℕ_U)) = \{\tval(π_{ℍ_U}((n))_U) : n ⩾ 0\}$ is dense in
  $(ℂ^k× ℝ^ℓ)/𝕃$.
\end{corollary}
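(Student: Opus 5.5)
Corollary~\ref{cor:dense} follows by identifying $\tval(π_{ℍ_U}((n)_U))$ with the $n$-th point of the orbit of $0$ under the rotation $ρ$ of~\eqref{def:rho}, and then applying Lemma~\ref{lem:kronecker-complex}.

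We first compute $\tval$ on the class of $(n)_U$. By the commutative diagram~\eqref{diag:commutative}, $\tval(π_{ℍ_U}((n)_U)) = π_𝕃(\rval((n)_U))$. The finite sequence $(n)_U$ and the one-digit sequence $n\cdot$ (the digit $n$ at index~$0$ and zeros elsewhere) satisfy $[(n)_U]_U = n = [n\cdot]_U$, since $u_0=1$. Corollary~\ref{prop:rval} therefore gives $\rval((n)_U) - \rval(n\cdot) ∈ 𝕃$. Moreover, by~\eqref{eq:def-Phi}, $\rval(n\cdot) = n\alphavec^{d-1}$. Hence
\begin{displaymath}
  \tval(π_{ℍ_U}((n)_U)) = n\alphavec^{d-1} \bmod 𝕃 = ρ^n(\mathbf{0}),
\end{displaymath}
where $\mathbf{0}$ is the class of the origin. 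Note that the sequence $n\cdot$ does not lie in $𝔾$ with a bounded digit uniformly in $n$, but for each fixed $n$ it is a finite sequence over~$ℤ$. This is all that Corollary~\ref{prop:rval} requires, and standard initial conditions are used exactly there.

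It follows that the set in question is the forward orbit $\{ρ^n(\mathbf{0}) : n ⩾ 0\}$. Lemma~\ref{lem:kronecker-complex} asserts that $ρ$ has dense orbits, and we apply it to the point $\mathbf{0}$, which gives the corollary.

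The only point needing care is whether Lemma~\ref{lem:kronecker-complex} gives density of the forward orbit rather than the two-sided orbit. For a translation on a compact abelian group this is automatic. The closure $C$ of the forward orbit of $\mathbf{0}$ is a closed subsemigroup. Since $\mathbf{0}$ is recurrent for a rotation, $\mathbf{0}$ is a limit of points $ρ^{n_j}(\mathbf{0})$ with $n_j → ∞$, so $-\alphavec^{d-1} = \lim ρ^{n_j-1}(\mathbf{0})$ lies in $C$. Thus $C$ is a closed subgroup containing the two-sided orbit, so $C$ is the whole torus. This is a one-line remark, and I expect no real obstacle.
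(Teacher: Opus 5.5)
Your proof is correct and is essentially the argument the paper intends. The paper states Corollary~\ref{cor:dense} without proof as an immediate consequence of Lemma~\ref{lem:kronecker-complex}, via exactly the identification $\tval(\pi_{\mathbb{H}_U}((n)_U)) = n\alphavec^{d-1} \bmod \mathbb{L} = \rho^n(\mathbf{0})$ that you obtain from Corollary~\ref{prop:rval} using $u_0=1$. Your extra remark on forward versus two-sided orbits is sound but not strictly needed, since the Kronecker theorem invoked in the proof of Lemma~\ref{lem:kronecker-complex} already gives density of $\{n(c_0,\dots,c_{d-2}) \bmod \mathbb{Z}^{d-1} : n \geq 0\}$.
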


\subsection{A topology on $𝔾$ }\label{section:topology}

We now come to the continuity of~$\tval$, which amongst other things will
let us conclude that the function~$\tval$ is onto.  We first endow the
group~$𝔾$ with an appropriate metric topology.  For $g = (g_n)_{n ⩾ 0}∈ 𝔾$,
set
\begin{displaymath}
  \|g\|_U = ∑_{n ⩾ 0} |g_n|\alphanorm^n .
\end{displaymath}
We endow the group $𝔾$ with the topology defined by the distance~$d$
given by
\begin{displaymath}
  d(g, g') = \|g-g'\|_U.
\end{displaymath}
It is clear that this topology makes the function~$\tval$ continuous
because
\begin{displaymath}
  \|\rval(g)-\rval(g')\|_∞ ⩽ \|g-g'\|_U = d(g, g')
\end{displaymath}
holds for all $g,g' ∈ 𝔾$.  Note that this topology makes addition in $𝔾$
continuous as
\begin{displaymath}
  \|(f+g)-(f'+g')\|_U ⩽ \|f-f'\|_ U+ \|g-g'\|_U
\end{displaymath}
for  all $f,f',g,g' ∈ 𝔾$.

Note that the distance~$d$ that we have just defined is not equivalent to
the distance~$d'$ given by $d'(g, g') = 2^{-\min\{n : g_n ≠ g'_n\}}$.  To
see this, we will define a sequence $(g^{(n)})$ of elements of~$𝔾$ such
that $d(g^{(n)}, \period{0}1) → 0$ while $d'(g^{(n)}, \period{0}) → 0$
as $n → ∞$.  Let $U = (u_n)_{n ⩾ 0}$ be the Fibonacci numeration with
$u_0 = u_1 = 1$ and $u_{n+2} = u_{n+1} + u_n$ for $n ⩾ 0$.  By Binet's
formula, each number~$u_n$ is equal to
$(\gmean^{n+1}-\hmean^{n+1})/{\sqrt{5}}$ where $\gmean$ is the golden mean
and $\hmean$ its algebraic conjugate.  For each integer~$n ⩾ 0$, let
$g^{(n)}$ be defined as follows.
\begin{displaymath}
  g_k^{(n)} =
  \begin{cases}
    \gmean^{2n} + \hmean^{2n} & \text{if $k = 2n$}, \\
    0                        & \text{otherwise}.
  \end{cases}
\end{displaymath}

Note first that $\gmean^{2n} + \hmean^{2n}$ is indeed an integer and
  that each element~$g^{(n)}$ has only one non-zero entry.  The
first elements are $g^{(0)} = \period{0}2$, $g^{(1)} = \period{0}300$ and
$g^{(2)} = \period{0}70000$.  It can be verified
that$[g^{(n)}]_U = u_{4n} + 1$ for each $n ⩾ 0$.  It follows that
$d(g^{(n)},\period{0}1)→ 0$ but $d'(g^{(n)}, \period{0})→ 0$ as $n → ∞$.
 
However, for any finite subset~$B$ of~$ℤ$, the restrictions of $d$ and~$d'$
to $B^ℕ$ are equivalent.  We endow $\uspace = 𝔾/ℍ_U$ with the quotient
topology.  Note that the map $+1 : \uspace → \uspace$ defined by
\begin{displaymath}
  [z]+1 ≔ [z]+[\period{0}1] = [z+\period{0}1]  
\end{displaymath}
is well defined and continuous. Note that $\uspace$ is  compact, by Proposition~\ref{pro:trace}.

\begin{proposition}\label{prop:surjective}
  The function~$\tval: \uspace → (ℂ^k× ℝ^ℓ)/𝕃$ is continuous. \end{proposition}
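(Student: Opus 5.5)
The plan is to deduce continuity of $\tval$ from the continuity of $\rval$ on $𝔾$ together with the universal property of the quotient topology, using the commutative diagram~\eqref{diag:commutative}. The group $\uspace = 𝔾/ℍ_U$ carries the quotient topology induced by $π_{ℍ_U}$. For any topological space $Y$, a map $f:\uspace → Y$ is continuous if and only if $f∘π_{ℍ_U}: 𝔾 → Y$ is continuous, so it suffices to show that $\tval∘π_{ℍ_U}$ is continuous.

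First, $\rval:𝔾→ℂ^k×ℝ^ℓ$ is Lipschitz for the metric $d$. Indeed, by linearity $\rval(g)-\rval(g') = \rval(g-g')$, and for $f∈𝔾$,
\begin{displaymath}
  \|\rval(f)\|_∞ ⩽ ∑_{n⩾0}|f_n|\,\|\alphavec^{n+d-1}\|_∞ ⩽ ∑_{n⩾0}|f_n|\alphanorm^{n+d-1} ⩽ \|f\|_U ,
\end{displaymath}
because $\|\alphavec^{i}\|_∞ = \alphanorm^i$ (componentwise powers) and $\alphanorm<1$. This is the bound already recorded before the statement. Second, the projection $π_𝕃: ℂ^k×ℝ^ℓ → (ℂ^k×ℝ^ℓ)/𝕃$ is continuous by definition of the quotient topology on the torus (Lemma~\ref{lem:torus}). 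Hence $π_𝕃∘\rval$ is continuous on $𝔾$.

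Third, by Proposition~\ref{pro:kernel} we have $\rval(ℍ_U)⊂𝕃$, so $π_𝕃∘\rval$ is constant on the cosets of $ℍ_U$. Therefore $\tval$ is well defined and satisfies $\tval∘π_{ℍ_U} = π_𝕃∘\rval$. The universal property of the quotient then yields continuity of $\tval$.

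There is no real obstacle here. The only points to check are that the quotient topology on $\uspace$ is the one meant, and that the norm bound uses $\|\alphavec^i\|_∞=\alphanorm^i$ correctly. If one wanted a metric statement instead, one could note that $\tval$ is $1$-Lipschitz for the quotient pseudometric $\inf_{h∈ℍ_U}\|g-g'-h\|_U$ on $\uspace$. This follows since $\rval(h)∈𝕃$ does not change the class modulo $𝕃$.
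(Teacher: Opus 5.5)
Your proposal is correct and follows the paper's proof. Both arguments rest on the same three ingredients: continuity of $\rval$ via the bound $\|\rval(g)-\rval(g')\|_\infty \le \|g-g'\|_U$, continuity of $\pi_{\mathbb{L}}$, and the commutative diagram $\tval\circ\pi_{\mathbb{H}_U}=\pi_{\mathbb{L}}\circ\rval$ combined with the definition of the quotient topology on $\uspace$. The paper spells out that last step as an explicit preimage-of-open-sets argument rather than citing the universal property, which is the same thing.
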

\begin{proof}
  The function~$\rval:𝔾 → ℂ^k× ℝ^ℓ$ is continuous by definition
  of~$\|\,\,\|_U$.  The canonical map~$π_𝕃:ℂ^k× ℝ^ℓ → (ℂ^k× ℝ^ℓ)/𝕃$ is also
  continuous, thus so is their composition $π_𝕃 ∘ \rval$.  Let $O$ be an
  open set of $(ℂ^k× ℝ^ℓ)/𝕃$.  By the commutative
  diagram~\eqref{diag:commutative},
  $(\tval ∘ π_{ℍ_U})^{-1}(O)= (π_𝕃 ∘ \rval)^{-1}(O)$ and it is thus open as
  $π_𝕃 ∘ \rval$ is continuous.  By definition of the quotient topology,
  $\tval^{-1}(O)$ is open and $\tval : \uspace → (ℂ^k× ℝ^ℓ)/𝕃$ is thus a
  continuous function.
\end{proof}

\subsection{ What unimodularity yields}\label{sec:iso}

Notice that thus far we have not assumed that $β$ is unimodular.  Under
certain conditions, the function~$\tval$ defined
in~\eqref{diag:commutative} is one-to-one.  If the following two
assumptions are satisfied by the numeration~$U$, then we can show
that~$\tval$ is injective.

\begin{itemize}
\item[\textbf{Hyp.~1}] The unique $U$-normalised  sequence mapped
  to~$\mathbf{0}$ by~$\rval$ is the zero sequence $⋯000 = \period{0}$. In
  other words,
  \begin{displaymath}
    \rval^{-1}(\mathbf{0}) ∩ \overline{ℕ}_U = \{\period{0}\}.
  \end{displaymath}

\item[\textbf{Hyp.~2}] The distance between~$𝕃 ∖ \{\mathbf{0}\} $ and
  the images under~$\rval$ of the $U$-normalised  sequences is positive. In
  other words,
  \begin{displaymath}
    d(\rval(\overline{ℕ}_U), 𝕃 ∖ \{\mathbf{0}\}) =
    \inf_{\begin{smallmatrix} n ∈ ℕ \\ \mathbf{p} ∈ 𝕃∖ \{\mathbf{0}\} \end{smallmatrix}} 
    \|\rval((n)_U) - \mathbf{p}\|_∞ > 0
  \end{displaymath}
\end{itemize}

\begin{theorem}\label{thm:Phi-isomorphism}
  Let $U=(u_n)_{n⩾ 0 }$ be a standard Pisot numeration that preserves
  zeros, and let $\rval$ and $\tval$ be as defined
  in~\eqref{diag:commutative}.  If $U$ satisfies Hypotheses~1 and~2, then
  $\tval: \uspace → (ℂ^k× ℝ^ℓ)/𝕃 $ is a continuous group isomorphism. In
  particular, this gives a topological conjugacy between $(\uspace,+1)$ and
  $((ℂ^k× ℝ^ℓ)/𝕃 ,ρ)$.
\end{theorem}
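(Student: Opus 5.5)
We need four things: $\tval$ is a homomorphism, it is continuous, it is surjective, and it is injective. The conjugacy statement then follows formally. The first two are nearly free. $\rval$ is a group homomorphism on $𝔾$, and by Proposition~\ref{pro:kernel} $\rval(ℍ_U)⊂𝕃$, so $\tval$ is a well-defined homomorphism. Continuity is Proposition~\ref{prop:surjective}. For the conjugacy, $\tval([z]+1)=\tval([z])+\rval(\period{0}1)=\tval([z])+\alphavec^{d-1}=ρ(\tval([z]))$ by \eqref{eq:def-Phi} and \eqref{def:rho}.

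\emph{Surjectivity.} The domain $\uspace$ is compact: by Proposition~\ref{pro:trace} it is the image under $π_{ℍ_U}$ of $\overline{ℕ}_U$. The set $\overline{ℕ}_U$ is a closed subset of $D^ℕ$, which is compact in $d'$, and $d$ and $d'$ agree on $D^ℕ$. Hence $\tval(\uspace)$ is compact, so closed. It contains the set of Corollary~\ref{cor:dense}, which is dense, so $\tval$ is onto.

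\emph{Injectivity.} Let $x\in\ker\tval$. Choose a $U$-normalised representative $g∈\overline{ℕ}_U$ (Proposition~\ref{pro:trace}); then $\rval(g)=\mathbf p∈𝕃$, and we must show $g∈ℍ_U$. The first step is to show $\mathbf p=\mathbf 0$ using Hyp.~2. Write $\rval(g)=\rval(g_{[0:n)})+\alphavec^n\rval(g_{[n:∞)})$. The heads $g_{[0:n)}=(m_n)_U$ lie in $ℕ_U$, and the tail term has norm $O(\alphanorm^n)$, so $\rval((m_n)_U)\to\mathbf p$. If $\mathbf p\neq\mathbf 0$, this contradicts the positive distance in Hyp.~2; hence $\rval(g)=\mathbf 0$. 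Hyp.~1 then forces $g=\period{0}∈ℍ_U$, so $x=0$.

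\emph{Homeomorphism.} A continuous bijection from a compact space to a Hausdorff space is a homeomorphism, so $\tval$ is a topological isomorphism. The one point to check is that $\uspace$ is compact with the quotient topology induced by $d$; this is the compactness argument above, since $π_{ℍ_U}$ is continuous. I expect the main subtlety to be in the injectivity step: Hyp.~2 is stated for finite normalised sequences $(n)_U$, so the limit argument must pass through heads. This works because every head of a normalised sequence lies in $ℕ_U$ and the tails contribute only geometrically small terms.
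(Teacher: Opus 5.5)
Your proof is correct and follows the paper's argument. It gets continuity from Proposition~\ref{prop:surjective} and surjectivity from Corollary~\ref{cor:dense} together with compactness of $\uspace$ via Proposition~\ref{pro:trace}. Injectivity comes from a normalised representative, Hyp.~2 and then Hyp.~1, and continuity of the inverse from compactness. You also make explicit two points the paper leaves tacit: that the image $\tval(\uspace)$ is closed, and that Hyp.~2, stated for finite expansions $(n)_U$, transfers to infinite normalised sequences through their heads. Finally, you verify $\tval([z]+1)=\rho(\tval([z]))$ directly.
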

\begin{proof}
  By Proposition~\ref{prop:surjective}, $\tval$ is a continuous group
  morphism.  By Proposition~\ref{pro:trace}, each class of
  $\uspace = 𝔾/ℍ_U$ contains an element in~$\overline{ℕ}_U$.  Therefore, we
  can identify elements of~$\uspace$ with elements of~$\overline{ℕ}_U$.  By
  Corollary~\ref{cor:dense}, $\tval(π_{ℍ_U}(ℕ_U))$ is dense in
  $(ℂ^k× ℝ^ℓ)/𝕃$.  Therefore, $\tval$ is onto.  It remains to show that
  $\tval$ is one-to-one.  Let $g ∈ \overline{ℕ}_U$ be such that
  $\rval(g) ∈ 𝕃$.  By Hyp.~2, $\rval(g)$ must be equal to~$\mathbf{0}$.  By
  Hyp.~1, $g$ must be the sequence $\period{0}$.  This shows that the
  kernel of~$\rval$ is $ℍ_U$ and that $\tval$ is one-to-one.  Since
  $\uspace$ and $(ℂ^k× ℝ^ℓ)/𝕃$ are compact spaces, $\tval^{-1}$ is also
  continuous.
\end{proof}

Next we will show that Hypotheses 1 and 2 are satisfied if $β$ is
unimodular and if the standard numeration~$U$ associated to~$β$ preserves
zeros.  Recall the $β$-numeration defined in
Section~\ref{sec:beta-numeration}.  Define
\begin{displaymath}
  \fin(β)≔  \{ x⩾ 0: x \text{ has a finite $β$-expansion} \}.
\end{displaymath}
The Pisot number~$β$ \emph{satisfies the Condition~F} if $ℤ[β] ⊂ \fin(β)$
\cite{Frougny-Solomyak-1992}.  The Condition~F is sometimes written
$ℤ[1/β] ⊂ \fin(β)$.  The following elementary lemma shows that the two
conditions are indeed equivalent. Note that if $α ∈ \fin(β)$, then both
$αβ$ and $α/β$ also belong to~$\fin(β)$.
\begin{lemma}\label{lem:def-F}
  \begin{displaymath}
    ℤ[β] ⊂ \fin(β) \iff ℤ[1/β] ⊂ \fin(β)
  \end{displaymath}
\end{lemma}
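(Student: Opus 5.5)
The plan is to prove both inclusions using the closure property noted just before the statement: $\fin(β)$ is stable under multiplication by $β$ and by $1/β$. Since $\fin(β)$ consists of non-negative reals, I read Condition~F as concerning the non-negative elements. So the statement to prove is
\begin{displaymath}
  ℤ[β]∩[0,∞) ⊂ \fin(β) \iff ℤ[1/β]∩[0,∞) ⊂ \fin(β).
\end{displaymath}

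First I relate the two rings. Let $P(X)=X^d-a_{d-1}X^{d-1}-⋯-a_0$ be the minimal polynomial. Dividing $P(β)=0$ by $β$ gives
\begin{displaymath}
  1/β = (β^{d-1}-a_{d-1}β^{d-2}-⋯-a_1)/a_0 .
\end{displaymath}
This means $1/β∈ℤ[β]$ only when $a_0=±1$, so the two rings need not coincide. Instead, I use the fact that every element of either ring is, up to a power of $β$, an element of the other. Concretely, each $x∈ℤ[1/β]$ can be written as $x=y/β^N$ with $y∈ℤ[β]$, by clearing denominators in the polynomial expression in $1/β$. Conversely, each $y∈ℤ[β]$ can be written as $y=β^Nx$ with $x∈ℤ[1/β]$, namely $x=∑ c_iβ^{i-N}$ with $N$ at least the degree.

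For ($⇒$), take $x⩾0$ in $ℤ[1/β]$ and write $x=y/β^N$. Then $y=β^Nx⩾0$ and $y∈ℤ[β]$, so $y∈\fin(β)$ by hypothesis. Applying the closure under division by $β$ $N$ times gives $x∈\fin(β)$.

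For ($⇐$), take $y⩾0$ in $ℤ[β]$ and write $y=β^Nx$ with $x⩾0$ in $ℤ[1/β]$. Then $x∈\fin(β)$ by hypothesis, and multiplying by $β$ $N$ times gives $y∈\fin(β)$.

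The argument rests on justifying the closure property itself. Multiplying a finite $β$-expansion $∑_{k⩽n}d_kβ^k$ by $β^{±1}$ shifts the indices, giving $∑_{k⩽n}d_kβ^{k±1}$. This is again finite. It also still satisfies the greedy condition~\eqref{eq:beta-normalised}: each inequality $∑_{i⩽k}d_iβ^i<β^{k+1}$ just gets multiplied by $β^{±1}$. So the shifted expansion remains the normalised expansion.

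I expect no real obstacle beyond this; the only care needed is keeping track of non-negativity, which multiplication by positive powers of $β$ preserves.
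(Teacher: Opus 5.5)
Your proof is correct and follows essentially the same route as the paper: you multiply by a suitable power of $\beta$ to pass between $\mathbb{Z}[1/\beta]$ and $\mathbb{Z}[\beta]$, and then use that $\fin(\beta)$ is stable under multiplication and division by $\beta$. Your explicit check that shifting preserves the greedy condition~\eqref{eq:beta-normalised}, and your restriction to non-negative elements, are details the paper leaves implicit.
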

\begin{proof}
  We prove one direction as the reverse implication is proved the same way.
  Suppose that $ℤ[β] ⊂ \fin(β)$ and let $α ∈ ℤ[1/β]$, so that
  $α = ∑_{i = 0}^n a_iβ^{-i}$ where each coefficient $a_i ∈ ℤ$.  Thus
  $αβ^n = ∑_{i = 0}^n a_iβ^{n-i} ∈  ℤ[β]$. Therefore $αβ^n$ has a finite
  $β$-expansion and so has $α$.
\end{proof}

\begin{theorem} \label{thm:hyp12}
  Let $U$ be the standard numeration associated to a unimodular Pisot
  number~$β$.  If $β$ satisfies Condition~F, then $U$ satisfies Hyp.~1
  and~2.
\end{theorem}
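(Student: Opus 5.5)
The plan is to reduce both hypotheses to known facts about the Rauzy fractal and Akiyama's tiling for unimodular Pisot numbers satisfying Condition~F \cite{Akiyama-1997,Akiyama-1998}. I would pass through the standard correspondence between $U$-normalised words and $β$-admissible words, and through the algebraic description of $𝕃$.

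\emph{Step 1: dictionary.} With standard initial conditions, Lemma~\ref{lem:dbetafinite} shows $d_β(1)=t_1⋯t_m$ is finite. In that case the recurrence of $U$ is compatible with $d_β(1)$ (as used in Lemma~\ref{lem:positive-H-entries}). By Parry's criterion (Theorem~\ref{thm:Pisot-properties}), a finite word $w$ is $U$-normalised if and only if every suffix-shift of $w$ is $\ltlex d_β^*(1)$, that is, if and only if $w$ is $β$-admissible. Hence $\overline{ℕ}_U$ is exactly the set of one-sided sequences all of whose heads are $β$-admissible.

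For a finite word $w$, $\rval(w)$ is the image under the contracting conjugate embedding $σ_c: ℚ(β)→ℂ^k×ℝ^ℓ$ of $x_w=∑ w_nβ^{n+d-1}$. Likewise $𝕃=σ_c(ℤ+ℤβ+⋯+ℤβ^{d-2})$. Since $β$ is a unit, $ℤ[β]=ℤ[1/β]$. Multiplying by a power of $β$ then turns $\rval(\overline{ℕ}_U)$, up to a fixed linear change of coordinates, into the central tile
\[\mathcal T=\overline{\{σ_c(\textstyle∑_{i⩾1} d_iβ^{-i}) : d_1d_2⋯ \text{ admissible}\}}.\]

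\emph{Step 2: tiling and exclusive inner point.} I would invoke the results that, for unimodular Pisot $β$, the tiles $σ_c(x)+\mathcal T_x$, indexed by $x∈ℤ[1/β]∩[0,∞)$, form a tiling of $ℂ^k×ℝ^ℓ$. Here $\mathcal T_x$ is built from the admissible fractional continuations of the integral part of $x$. Under Condition~F, $\mathbf 0$ is an exclusive inner point: it lies in the interior of $\mathcal T$ and in no other tile. The standard argument for exclusivity is that any $y∈ℤ[1/β]$ with small conjugates can be absorbed using Condition~F into a finite expansion. I would check that this exclusivity holds in our normalisation, namely: there is $ε>0$ such that the ball $B(\mathbf 0,ε)$ meets no translate $\mathbf p+\rval(\overline{ℕ}_U)$ with $\mathbf p∈𝕃∖\{\mathbf 0\}$. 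For this I would write an element of $\ell+\rval(g)$ as $σ_c$ of $y+x_g$ and reduce it via Condition~F.

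\emph{Step 3: deduce the hypotheses.} For Hyp.~2, suppose $\|\rval((n)_U)-\mathbf p\|_∞<ε$ with $\mathbf p∈𝕃∖\{\mathbf 0\}$. Then $\mathbf 0$ lies within $ε$ of the translate $-\mathbf p+\rval(\overline{ℕ}_U)$, which contradicts Step~2. For Hyp.~1, suppose $g∈\overline{ℕ}_U$ is non-zero with $\rval(g)=\mathbf 0$, and let $N=\ord(g)$. Then $\mathbf 0=\alphavec^{N}\rval(σ^N g)$. Shifting is allowed here, because a normalised tail read from a nonzero leading digit is still admissible. So $\mathbf 0$ belongs to the image of an admissible sequence starting with a non-zero digit. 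Rescaling by $β^{-N}$ places $\mathbf 0$ on a piece of $\mathcal T$ coming from a nonzero integral part, i.e.\ in a tile $σ_c(x)+\mathcal T_x$ with $x≠0$. This contradicts exclusivity again, with Condition~F used to express $x$ finitely.

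The main obstacle I expect is Step~2 together with the bookkeeping in Step~1. The task is to verify carefully that the exclusive-inner-point statement in the $β$-numeration literature, which concerns tiles indexed by $ℤ[1/β]$ with fractional parts, transfers exactly to our setting. Our setting uses one-sided integral sequences, the shift by $d-1$ in $\rval$, and the lattice $𝕃$ generated only by $1,…,\alphavec^{d-2}$ rather than by all of $σ_c(ℤ[β])$. I would first try to show directly that $σ_c(ℤ[β])=𝕃+σ_c(ℤβ^{d-1})$, so that the extra translates are precisely those accounted for by $[g]_U=A_{d-1}^{(g)}$ in Proposition~\ref{prop:restack_for_g}.
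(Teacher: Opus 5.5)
Your proposal has a genuine gap. The Step~1 identification is false, and the Hyp.~2 argument is circular and omits the one lemma that does the work.

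\textbf{Step~1 and the tile identification.} With standard initial conditions, $U$-normalisation and $\beta$-admissibility agree only away from the radix point. That is exactly what Lemma~\ref{lem:unormbnorm-take2} gives, and only for $\ord(g)\ge N$. Two examples:
\begin{itemize}
\item For Zeckendorf, $\period{0}1$ is $\beta$-admissible but not $U$-normalised, because $u_1=u_0=1$.
\item For $X^3-X^2-2X-1$, one has $d_\beta(1)=20011$. The standard sequence $1,1,3,6,13,\dots$ is not even a shift of the Parry sequence $1,3,7,15,32,\dots$.
\end{itemize}
So there is no reason for $\rval(\overline{\mathbb N}_U)$ to be a linear image of Akiyama's central tile. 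The paper's tile tiles by $\mathbb L$-translates of a single set, whereas Akiyama's tiling uses several shapes.

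Your $\mathcal T$ is also ill-defined. Under the contracting embedding, $\sum_{i\ge1}d_i\beta^{-i}$ becomes a series in the $\alpha_j^{-1}$, which diverges. Akiyama's tiles are built from integral parts $\sum_{i\ge0}d_i\beta^i$ and are indexed by fractional parts.

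The same confusion affects your Hyp.~1 argument. Moving the lowest nonzero digit to position $0$ keeps the point in the central tile, so nothing is contradicted. You would have to push it to position $-1$. Even then you need $\beta$-admissibility of $g$ near the radix point, which Lemma~\ref{lem:unormbnorm-take2} only provides from position $N$ on.

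\textbf{Hyp.~2 is circular.} What you propose to \emph{check} in Step~2 is that $B(\mathbf 0,\varepsilon)$ meets no $\mathbf p+\rval(\overline{\mathbb N}_U)$ with $\mathbf p\in\mathbb L\setminus\{\mathbf 0\}$. That is Hyp.~2 itself, so Step~3 proves nothing there. ``Reduce via Condition~F'' does not explain why a nonzero offset in $\mathbb L$ cannot bring the tile near $\mathbf 0$. An offset in $\sigma_c(\mathbb Z[\beta])\setminus\mathbb L$, such as $-\rval((n)_U)$, does exactly that. The identity $\sigma_c(\mathbb Z[\beta])=\mathbb L+\sigma_c(\mathbb Z\beta^{d-1})$ is trivial and does not separate the two cases.

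\textbf{The missing idea is Lemma~\ref{lem:boundedval}.}
\begin{itemize}
\item Write $\rval(g)-\mathbf p$ as the conjugate image of $\sigma^{d-1}(g)+g'$, where $g'\neq 0$ is supported on $\{0,\dots,d-2\}$.
\item Suppose its $\beta$-normalisation $h$, which is finite by Condition~F, had $\ord(h)\ge N+d$. Then $\sigma^{1-d}(h)$ would be $U$-normalised by Lemma~\ref{lem:unormbnorm-take2}.
\item By Proposition~\ref{prop:restack_for_g}, $\sigma^{1-d}(h)$ would have the same $U$-value as $g$. With standard initial conditions the $\alpha^{d-1}$-coefficient is $[\cdot]_U$, and $g'$ does not touch it.
\item Hence $\sigma^{1-d}(h)=g$, contradicting $g'\neq 0$.
\end{itemize}

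Only after this does anything from the tiling literature enter, and only in the weak quantitative form of Lemma~\ref{lem:boundedphi}. That lemma comes from the finiteness Lemma~\ref{lem:boundedset}, which uses unimodularity. It gives $\|\rval(g)-\mathbf p\|_\infty\ge K\lambda^{N+d}$ uniformly in $g$ and $\mathbf p$.

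Hyp.~1 is handled the same way. Lemma~\ref{lem:boundvalnor} gives $\ord(\nor_\beta(g_{[0:n]}))\le\max(\ord(g),N)$ on finite truncations, Lemma~\ref{lem:boundedphi} then gives a uniform lower bound, and one passes to the limit. This avoids any admissibility question for the infinite sequence.
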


We prove in Proposition~\ref{prop:zero-preserving-then-F} that if $U$
preserves zeros, then $\beta$ satisfies Condition~F.  We deduce
\begin{corollary} \label{cor:hyp12}
  Let $U$ be the standard numeration associated to a unimodular Pisot
  number~$β$.  If $U$ preserves zeros, then $U$ satisfies Hypotheses~1
  and~2.
\end{corollary}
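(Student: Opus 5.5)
The corollary follows by combining Theorem~\ref{thm:hyp12} with Proposition~\ref{prop:zero-preserving-then-F}, which the paper proves in Section~\ref{sec:condition-F}.

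Let $U$ be the standard numeration associated to the unimodular Pisot number~$β$, and assume that $U$ preserves zeros. Proposition~\ref{prop:zero-preserving-then-F} states that this forces $β$ to satisfy Condition~F, that is, $ℤ[β] ⊂ \fin(β)$. Lemma~\ref{lem:def-F} shows that this is equivalent to $ℤ[1/β] ⊂ \fin(β)$, so the formulation of Condition~F used there does not matter.

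We now have a unimodular Pisot number~$β$ satisfying Condition~F, with $U$ its standard numeration. These are exactly the hypotheses of Theorem~\ref{thm:hyp12}. That theorem gives that $U$ satisfies Hyp.~1 and~Hyp.~2.

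The only point to check is that the two results concern the same objects. Both must refer to the same Pisot number~$β$, namely the dominant root of the minimal polynomial of the recurrence. Both must also use the standard initial conditions; Theorem~\ref{thm:hyp12} explicitly requires them. Proposition~\ref{prop:zero-preserving-then-F} needs $U$ to be a Pisot numeration associated to~$β$, and it uses only the growth estimate of Lemma~\ref{lem:boundun}, which holds for any such numeration. Once this bookkeeping is confirmed, nothing further needs to be proved.
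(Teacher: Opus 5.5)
Your proposal is correct and follows the paper's own argument: the corollary is obtained by feeding Proposition~\ref{prop:zero-preserving-then-F} (preservation of zeros implies Condition~F) into Theorem~\ref{thm:hyp12}. Two side remarks in your bookkeeping are off but harmless. The detour through Lemma~\ref{lem:def-F} is unnecessary, and Proposition~\ref{prop:zero-preserving-then-F} relies on more than Lemma~\ref{lem:boundun} (notably Lemmas~\ref{lem:unormbnorm-take2} and~\ref{lem:dividesPw}); since it is stated for any Pisot numeration associated to~$\beta$, it still applies here.
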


\subsubsection{Notation}
\label{sec:notation}

Let $U = (u_n)_{n ⩾ 0}$ be a Pisot numeration of degree~$d$, with standard
initial conditions, and which preserves zeros. As before, let
$α_0 = β, α_1, …, α_{d-1}$ denote the conjugates of~$β$, and let $m$ be the
length of (necessarily finite) $d_β(1)$.  Let
\begin{displaymath}
  \gtwo ≔ \{ g =(g_n)_{n ∈ ℤ} = ⋯  g_1g_0 ⋅ g_{-1}g_{-2}⋯  ∈ ℤ^ℤ
                 \text{ with } \|g\|_∞ < ∞  \}
\end{displaymath}
be the inverse limit of~$𝔾$.  Here, as elsewhere in this article, positive
indices run to the left, and negative indices run to the right.  For
$g∈ \gtwo$, as for one-sided sequences, define
\begin{displaymath}
  \ord(g) ≔ \inf \{ n : g_n ≠ 0\}
  \quad\text{and}\quad
  \deg(g) ≔ \sup \{ n : g_n ≠ 0\}.
\end{displaymath}
As usual the convention is that $\ord(\period{0})= ∞$ and
$\deg(\period{0})= -∞$.  For example,
$\ord( \period{0}321⋅ 1234 \period{0})= -4$ and
$\deg( \period{0}321⋅ 1234 \period{0})= 2$.  Let
\begin{displaymath}
  \gfin ≔ \{ g ∈ \gtwo:  \ord(g) > -∞  \text{ and }  \deg(g) < ∞\}
\end{displaymath}
be the space of two-sided finite sequences $g = (g_n)_{n = -∞}^∞$.

For a complex number~$α$, and for $g ∈ \gfin$, we set
$[g]_α≔ ∑_{n = -∞}^∞ g_nα^n$; this is well-defined as the latter is a
finite sum.  We mainly use the notation~$[g]_α$ when $α$ is one of~$β$'s
conjugates.  If the finite $g$ satisfies $\ord(g) ⩾ 0$, then we can
identify it as an element of~$𝔾$, and $[g]_U=∑_{n = 0}^∞ g_nu_n$.  Since
$β$ is unimodular, each integer~$u_n$ for a negative index~$n$ is
well-defined, for the recurrence can hold for all $n ∈ ℤ$ except for the
specified initial conditions ($u_0, u_{-1}, …, u_{-d+1}$), but,
nevertheless, we only consider $U$-expansions for~$g$ with $\ord(g) ⩾
0$. If $\ord(g) ⩾ 0$, $g = (g_n)_{n = 0}^∞$ is \emph{$U$-normalised} if
$∑_{n = 0}^k g_nu_n < u_{k+1}$ for each integer $k ⩾ 0$.
  
The sequence $g = (g_n)_{n = -∞}^∞$ is \emph{$β$-normalised} if
$∑_{n = -∞}^k g_nβ^n < β^{k+1}$ for each integer $k ∈ ℤ$. Note here that
the sum is convergent because $\|g\|_∞ < ∞$.

The \emph{$β$-expansion} $(x)_β$ of~$x ⩾ 0$ is the unique $β$-normalised
sequence $g = (g_n)_{n = -∞}^∞$ such that $x = [g]_β$.  This does not have
to be a finite expansion, but it is the case if $(x)_β= g ∈ \gfin$ and $β$
satisfies the Condition~F.  Both the $β$-expansion of $x⩾ 0$ and the
$U$-expansion of $k ∈ ℕ$ can be obtained using the greedy algorithm.

The \emph{$β$-normalisation} $\nor_β(g)$ of a sequence
$g = (g_n)_{n = -∞}^∞$ such that $\deg(g) < ∞$ is the sequence
$(|[g]_β|)_β$. The assumption that~$β$ satisfies Condition~F, equivalently,
that $U$ preserves zeros, guarantees that the $β$-normalisation of a finite
sequence is still a finite sequence.

The shift map~$σ: \gtwo → \gtwo$ is defined by
$σ((g_n)_{n = -∞}^∞) ≔ (g_{n-1})_{n = -∞}^∞$, i.e.,
$σ(⋯ g_1 g_0⋅ g_{-1} g_{-2}⋯ ) ≔ ⋯ g_1 g_0 g_{-1}⋅ g_{-2}g_{-3}⋯ $.  Note
that this is not the classical definition of the shift map, but rather its
inverse.  If $g ∈ \gfin$, then
\begin{displaymath}
  \ord(σ(g)) = \ord(g) + 1, \, \deg(σ(g)) = \deg(g) + 1
  \quad\text{ and }\quad
  [σ(g)]_α = α[g]_α
\end{displaymath}
for each~$α ∈ \{α_0 , α_1, …, α_{d-1} \}$.

\begin{lemma} \label{lem:unormbnorm-take2} 
  Let $U = (u_n)_{n ⩾ 0}$ be a Pisot numeration associated to~$β$.  There
  exists an integer $N = N(U)$ such that if the  sequence
  $g = (g_n)_{n = 0}^∞$ satisfies $\ord(g) ⩾ N$ then $g$ is $U$-normalised
  if and only if $g$ is $β$-normalised. 
\end{lemma}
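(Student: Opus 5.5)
The plan is to compare, for each $k$, the two partial sums
\[
S^U_k = ∑_{n=N}^k g_nu_n \quad\text{and}\quad S^β_k=∑_{n=N}^k g_nβ^n .
\]
Here $g_n = 0$ for $n<N$ because $\ord(g) ⩾ N$, and both sums are finite. We will show that $u_{k+1}-S^U_k>0$ if and only if $β^{k+1}-S^β_k>0$, as soon as $N$ is large. For $k<N$ both sums vanish, so both normalisation conditions hold trivially there. Throughout we take digits to be non-negative integers, as is implicit in both notions of normalisation.

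First we fix a digit bound. If $g$ is $β$-normalised then $g_kβ^k<β^{k+1}$, so $g_k<β$. If $g$ is $U$-normalised then $g_k<u_{k+1}/u_k$, which is bounded by Lemma~\ref{lem:boundun}. So there is a constant $c$, depending only on $U$, such that a sequence with some digit $g_k>c$ at a position $k ⩾ N$ is neither $U$- nor $β$-normalised, provided $N$ is large. We may therefore assume $\|g\|_∞ ⩽ c$.

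Second, we get a uniform gap on the $β$ side from an algebraic-integer argument. The number $x_k ≔ β^{k+1}-S^β_k$ lies in $ℤ[β]$. Its Galois conjugates are $α_i^{k+1}-∑_{n=N}^k g_nα_i^n$ for $1 ⩽ i ⩽ d-1$, and each has modulus at most $M ≔ 1+c/(1-\alphanorm)$, since every $|α_i|<1$. If $x_k ≠ 0$, its norm is a nonzero integer, so
\[
|x_k| ⩾ M^{-(d-1)} ≕ δ .
\]
Hence either $x_k ⩽ 0$ or $x_k ⩾ δ$, with $δ$ independent of $k$ and of $g$. On the $U$ side the analogous gap is automatic: $y_k ≔ u_{k+1}-S^U_k$ is an integer, so either $y_k ⩽ 0$ or $y_k ⩾ 1$.

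Third, we link the two quantities using Lemma~\ref{lem:boundun}, which gives $|u_n-λβ^n| ⩽ K\alphanorm^n$. This yields
\[
|y_k-λx_k| ⩽ K\alphanorm^{k+1}+cK∑_{n⩾N}\alphanorm^n ⩽ K\alphanorm^{N}\bigl(1+c/(1-\alphanorm)\bigr) ≕ ε_N ,
\]
where we used $k+1>N$, the only case left. Choose $N$ so that $ε_N<\min(1,λδ)$.
\begin{itemize}
\item If $x_k ⩾ δ$, then $y_k ⩾ λδ-ε_N>0$.
\item If $x_k ⩽ 0$, then $y_k ⩽ ε_N<1$, so $y_k ⩽ 0$ because $y_k$ is an integer.
\end{itemize}
Thus $y_k>0$ if and only if $x_k>0$, for every $k$, and this is exactly the claimed equivalence.

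The main obstacle is the second step, the uniform positive gap $δ$ for $β^{k+1}-S^β_k$. A naive comparison with the exponentially small errors fails without it, because the $β$-side quantity is not a priori bounded away from $0$. The norm argument handles this: it needs only that $β$ is a Pisot unit-free algebraic integer with bounded conjugate contributions, so unimodularity is not required. Some care is needed to make $c$, and hence $δ$, depend only on $U$ before choosing $N$.
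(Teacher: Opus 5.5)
Your proof is correct, but it obtains the key estimate differently from the paper. Your treatment of ``$U$-normalised $\Rightarrow$ $\beta$-normalised'' matches the paper's: the integer gap $y_k\ge 1$ absorbs the $O(\gamma^N)$ error coming from Lemma~\ref{lem:boundun}.

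The two proofs differ in the converse direction. The paper gets the gap on the $\beta$ side from the combinatorics of the $\beta$-shift. Since $d_\beta^*(1)$ is eventually periodic, the tails $\sum_{i\ge1}t^*_{i+n}\beta^{-i}$ are bounded below by some $\eta>0$. Parry's lexicographic criterion (Theorem~\ref{thm:Pisot-properties}) then yields $\sum_{i=N}^n g_i\beta^i\le\beta^{n+1}-\eta\beta^N$. You replace this by a Garsia-type norm argument. Every conjugate of $x_k\in\mathbb{Z}[\beta]$ other than $x_k$ itself is bounded by $M=1+c/(1-\gamma)$. So if $x_k\neq0$, its norm is a nonzero rational integer, which forces $|x_k|\ge M^{-(d-1)}$.

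\textbf{What your route buys.} It needs only that $\beta$ is an algebraic integer whose other conjugates lie in the open unit disc. It avoids both the eventual periodicity of $d_\beta^*(1)$ and the lexicographic characterisation of admissible sequences. It treats the two directions symmetrically. Once the digits are bounded by $c$, it even gives the equivalence index by index: $y_k>0$ iff $x_k>0$ for each single $k\ge N$. Your explicit digit bound $c$, depending only on $U$ (via $g_k<\beta$ and $g_k<u_{k+1}/u_k$ for non-negative digits), also makes the independence of $N$ from $g$ cleaner than the paper's implicit use of $\max D$ in its second direction.

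\textbf{What the paper's route buys.} It stays within the $\beta$-shift framework used elsewhere in the article. It produces a gap $\eta\beta^N$ that grows with $N$, whereas yours is a fixed $\delta$. Since $\varepsilon_N\to0$, either gap is enough.
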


Recall the definitions of $d_β(1)$ and $d_β^*(1)$ from
Section~\ref{sec:beta-numeration}.  The sequence
$d_β^*(1) = t_1^*t_2^*t_3^* ⋯$ satisfies
\begin{enumerate}
\item $∑_{i ⩾ 1}t_i^* β^{-i} = 1$, and 
\item there exists $η > 0$ such that $∑_{i ⩾ 1}t_{i+n}^*β^{-i} > η$ for each
  $n ⩾ 1$.
\end{enumerate}
The last property follows from the fact that the sequence $d_β(1)$ and
$d_β^*(1)$ are both ultimately periodic and that the sum
$∑_{i ⩾ 1}t_{i+n}^*β^{-i}$ takes finitely many non-zero values as $n$
varies.

\begin{proof} 
  Let $λ>0$ and $K>0$ be the real numbers guaranteed by
  Lemma~\ref{lem:boundun}. Recall that $D$ denotes the canonical digit
  set. Choose $N$ such that $K (\max D)\alphanorm^{N}/(1-\alphanorm) < 1$
  and $ληβ^N>1$.
  
  We first prove that if $g$ is $U$-normalised, then $g$ is $β$-normalised.
  For each $n$ such that $n ⩾ N$, using Lemma~\ref{lem:boundun} for
  the first and third inequalities, and the fact that $g$ is $U$-normalised
  for the second, we have
  \begin{align*}
    λ∑_{i = N}^ng_iβ^i
      & ⩽ ∑_{i = N}^ng_iu_i + K(\max D) ∑_{i = N}^n\alphanorm^i \\
      & ⩽ u_{n+1} -1 + K(\max D)∑_{i = N}^n\alphanorm^i \\
      & ⩽ λβ^{n+1} -1 + K(\max D)∑_{i = N}^{n+1}\alphanorm^i \\
      & ⩽ λβ^{n+1} -1 + K(\max D)\alphanorm^{N}/(1-\alphanorm).
  \end{align*}
  The choice of $N$ now gives that $g$ is $β$-normalised.

  Next suppose that $g$ is $β$-normalised. To see that $g$ is
  $U$-normalised, we first write
  \begin{align*}
    ∑_{i = N}^ng_iu_i
      & = ∑_{i = N}^ng_i(u_i - λβ^i) + λ ∑_{i = N}^ng_iβ^i.
  \end{align*}
  We claim that $∑_{i = N}^ng_iβ^i ⩽ β^{n+1}-ηβ^N$ for each $n ⩾ N$. If
  this is true, then again applying Lemma~\ref{lem:boundun}, we have
  \begin{align*}  ∑_{i = N}^ng_iu_i
       &⩽ K\|g\|_∞ ∑_{i = N}^n\alphanorm^i + λ(β^{n+1}-ηβ^N) \\
      & ⩽ K\|g\|_∞ ∑_{i = N}^n\alphanorm^i + u_{n+1}
          + K\alphanorm^{n+1} - ληβ^N\\
      & ⩽ u_{n+1} + \frac{K\|g\|_∞\alphanorm^N}{1-\alphanorm} - ληβ^N
  \end{align*}
  By choice of $N$, we have $K\|g\|_∞\alphanorm^N < ληβ^N(1-\alphanorm)$
  and $g$ is $U$-normalised.
  
  To prove the claim, we have $∑_{i = N}^ng_iβ^i ⩽ β^{n+1}-ηβ^N$ if and
  only if $∑_{i = 0}^{n-N} g_{i+N}β^i ⩽ β^{n-N+1}-η$; rearranging, we will
  show that $η ⩽ β^{n-N+1}-∑_{i = 0}^{n-N} g_{i+N}β^i$.  By definition
  $∑_{i ⩾ 1}t_{i+k}^*β^{-i} > η$ for each $k⩾ 1$. This, with the fact that
  $∑_{i ⩾ 1}t_i^* β^{-i} = 1$, implies that
  $η<β^{k}- β^{k-1}t_1-\dots - βt_{k-1}-t_{k}$ for each $k⩾ 1$. Setting
  $k=n+1-N$, we have
  \begin{align*}
     η & < β^{n-N+1}- β^{n-N}t_1- ⋯ - βt_{n-N}-t_{n+1-N}\\
       &⩽ β^{n-N+1}- β^{n-N}g_n-\dots - βg_{N+1} -g_N,
  \end{align*}
  where the last inequality follows because
  $g_n⋯ g_N \ltlex t_1 ⋯ t_{n+1-N}$ by Theorem~\ref{thm:Pisot-properties}.
\end{proof}

\begin{lemma} \label{lem:boundvalnor}
  If $g ∈  \gfin$, $\ord(g) ⩾ 0$ and $g$ is $U$-normalised, then
  $\ord(\nor_β(g)) ⩽ \max(\ord(g), N)$, where $N$ is given by
  Lemma~\ref{lem:unormbnorm-take2}.
\end{lemma}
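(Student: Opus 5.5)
The plan is to argue by contradiction, splitting on whether the lower block of $g$ is empty. Set $M ≔ \max(\ord(g),N)$ and $h ≔ \nor_β(g)$; by Condition~F (equivalently, zero preservation) $h ∈ \gfin$. Since $g$ has nonnegative digits and is nonzero (otherwise $\ord(g)=∞$ and the statement is trivial), we have $x ≔ [g]_β = [h]_β > 0$.

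\emph{First case: $\ord(g) ⩾ N$.} Then $g$ is $U$-normalised with $\ord(g) ⩾ N$, so by Lemma~\ref{lem:unormbnorm-take2} it is $β$-normalised. By uniqueness of the $β$-expansion, $h = g$, and therefore $\ord(h) = \ord(g) = M$.

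\emph{Second case: $\ord(g) < N$, so $M = N$.} Split $g = a + b$ with $a ≔ g_{[N:∞)}$ (placed at its original indices) and $b ≔ g_{[0:N)}$, where $b ≠ 0$. A tail of a $U$-normalised sequence at positions $⩾ N$ is still $U$-normalised in the sense of Lemma~\ref{lem:unormbnorm-take2}, since the defining inequalities only involve heads of $g$ and the lower digits are nonnegative. Hence $a$ is $β$-normalised, and $[b]_β > 0$ gives $x > [a]_β$.

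Suppose for contradiction that $\ord(h) > N$. Then $h$ is $β$-normalised with $\ord(h) ⩾ N+1$, so by Lemma~\ref{lem:unormbnorm-take2} it is also $U$-normalised. Both $a$ and $h$ are $β$-normalised and $[h]_β > [a]_β$, so $h >_{\text{lex}} a$. Let $j$ be the largest index with $h_j ≠ a_j$; then $h_j > a_j$ and $j ⩾ N+1$, because $h_N = 0 ⩽ a_N$ and all lower digits of both sequences vanish. The digits of $h$ and $g$ agree above $j$. Subtracting the common top, the contradiction must come from comparing
\begin{displaymath}
  [h]_β - [g]_β ⩾ β^j - ∑_{i<j} g_iβ^i
\end{displaymath}
with $0$, where we use $h_j ⩾ a_j + 1 = g_j + 1$ and $h_i ⩾ 0$.

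\emph{Main obstacle.} We must show $∑_{i<j} g_iβ^i < β^j$, even though $g_{[0:j)}$ is only $U$-normalised and not $β$-normalised near the bottom. I would handle this as in the proof of Lemma~\ref{lem:unormbnorm-take2}. Using Lemma~\ref{lem:boundun} and $U$-normalisation,
\begin{displaymath}
  λ∑_{i<j} g_iβ^i ⩽ ∑_{i<j} g_iu_i + \frac{K\max D}{1-\alphanorm} ⩽ u_j - 1 + \frac{K\max D}{1-\alphanorm} ⩽ λβ^j - 1 + K\alphanorm^j + \frac{K\max D}{1-\alphanorm},
\end{displaymath}
and the right-hand side is at most $λβ^j - 1 + K\alphanorm^N/(1-\alphanorm)\cdot(\ldots)$. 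Note that the error term here is a constant, not $\alphanorm^N$-small, because the lower indices start at $0$.

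To close the gap, I would avoid the crude bound and instead exploit the positive slack. The top part $g_{[N:j)}$ is $β$-normalised, so by the claim in the proof of Lemma~\ref{lem:unormbnorm-take2},
\begin{displaymath}
  ∑_{i=N}^{j-1} g_iβ^i ⩽ β^j - ηβ^N.
\end{displaymath}
It then suffices to show $∑_{i<N} g_iβ^i < ηβ^N$. This would follow from $[b]_U < u_N$ together with Lemma~\ref{lem:boundun}, provided $N$ is large enough that $u_N + K\max D/(1-\alphanorm) < ληβ^N$, or after a suitable rescaling of the constants. I expect this to require either using exactly the choice $ληβ^N > 1$ already made for $N$, or enlarging $N$ slightly; I would verify that the choice in Lemma~\ref{lem:unormbnorm-take2} suffices, and otherwise note that $N$ may be taken larger there without affecting anything. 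With this bound, $[h]_β > [g]_β$, which contradicts $[h]_β = x = [g]_β$. Hence $\ord(h) ⩽ N = M$.
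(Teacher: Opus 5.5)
Your first case is correct and is exactly the paper's argument. The gap is in the second case, at the step you call the main obstacle: the reduction to $\sum_{i<N} g_i\beta^i<\eta\beta^N$ cannot work. Since $d_\beta^*(1)$ is purely periodic of period $m$, the tail sum $\sum_{i\geq1}t^*_{i+n}\beta^{-i}$ equals $1$ for $n=m$, so $\eta<1$. On the other hand, $b=g_{[0:N)}$ can be any $U$-normalised block of length $N$, for instance $b=(u_N-1)_U$. Lemma~\ref{lem:boundun} then gives
\[
\lambda\sum_{i<N}b_i\beta^i\geq u_N-1-\frac{K\max D}{1-\gamma}\geq\lambda\beta^N-C
\]
for a constant $C$, and this exceeds $\lambda\eta\beta^N$ for all large $N$. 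Your sufficient condition $u_N+K\max D/(1-\gamma)<\lambda\eta\beta^N$ is in fact never satisfied, because $u_N\geq\lambda\beta^N-K\gamma^N$. Enlarging $N$ or rescaling constants does not help: the slack $\eta\beta^N$ left by the $\beta$-normalised top block is a fixed fraction smaller than the possible size of the bottom block. So $\sum_{i<j}g_i\beta^i<\beta^j$ remains unproved. It is not even clear that it holds in general, since near the radix point $U$-normalisation and $\beta$-normalisation are not comparable; this is exactly why Lemma~\ref{lem:unormbnorm-take2} requires $\ord(g)\geq N$.

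The missing idea is to compare $U$-values rather than $\beta$-values, using a fact you derive but never use: $h=\nor_\beta(g)$ is $U$-normalised. Both $g$ and $h$ are finite with $\ord\geq 0$, and $[g]_\beta=[h]_\beta$. Hence the minimal polynomial $P$ divides $P_{g-h}$, and the quotient has integer coefficients because $P$ is monic. Since $U$ satisfies the recurrence with characteristic polynomial $P$, this gives $[g]_U=[h]_U$. Two $U$-normalised expansions of the same integer coincide, so $g=h$, contradicting $\ord(g)<N<\ord(h)$. This is the paper's proof. It needs no lexicographic comparison or $\eta$-estimates, and it even yields $\ord(\nor_\beta(g))<N$ whenever $\ord(g)<N$.
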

\begin{proof}
  We will show that either $\ord(\nor_β(g)) = \ord(g)$, or
  $\ord(\nor_β(g)) ⩽ N$. This gives the claim.

  If $\ord(g) ⩾ N$, then $\nor_β(g) = g$ by Lemma~\ref{lem:unormbnorm-take2},
  and thus $\ord(\nor_β(g))= \ord(g)$.
  
  Suppose that $\ord(g) < N$ but that $\ord(\nor_β(g)) ⩾ N$. Then, again by
  Lemma~\ref{lem:unormbnorm-take2}, $\nor_β(g)$ is $U$-normalised.  Since
  $[g]_β = [\nor_β(g)]_\beta$, we also have $[g]_U = [\nor_β(g)]_U$.  This
  is a contradiction because, $g$ and $\nor_β(g)$ are both $U$-normalised.
\end{proof}

The following lemma is the key lemma. It tells us
  that when we add and normalise, the first non-zero entry cannot move too
  far.
\begin{lemma} \label{lem:boundedval}
    Let $g\in \gfin$ be a $U$-normalised sequence with $\ord(g) ⩾ 0$ and let
  $g' = (g'_n)_{n = 0}^{d-2}$ have support in $\{0, …, d-2\}$. Let $N$ be
  given by Lemma~\ref{lem:unormbnorm-take2}.  If at least one entry of~$g'$
  is non-zero, then $\ord(\nor_β(σ^{d-1}(g) + g')) < N+d$.
\end{lemma}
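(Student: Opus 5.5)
We argue by contradiction, using the Galois conjugates to turn an equality of $β$-values into equalities of $U$-values via Proposition~\ref{prop:restack_for_g}. Set $x ≔ σ^{d-1}(g)+g' ∈ \gfin$ and $h ≔ \nor_β(x)$. By Condition~F (equivalently, zero preservation), $h ∈ \gfin$. By definition $[h]_β = ε[x]_β$ with $ε ∈ \{1,-1\}$ the sign of $[x]_β$. Suppose that $\ord(h) ⩾ N+d$.

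Write $h = σ^{d-1}(k)$. Then $\ord(k) ⩾ N+1 ⩾ 0$, and $k$ is $β$-normalised, since shifting preserves $β$-normalisation. Hence $k$ is $U$-normalised by Lemma~\ref{lem:unormbnorm-take2}. The identity $[h]_β = ε[x]_β$ is an equality in $ℚ(β)$ between two expressions that are Laurent polynomials in $β$ with integer coefficients. Applying the field embeddings of $ℚ(β)$, we get $[h]_α = ε[x]_α$ for every conjugate $α ∈ \{α_0,…,α_{d-1}\}$. This is the step I expect to need the most care: one checks that $[\,\cdot\,]_α$ is the image of $[\,\cdot\,]_β$ under the embedding $β ↦ α$, which is valid because only finitely many powers of $β$ occur.

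Written out, the conjugate identities read
\begin{displaymath}
  ∑_{n ⩾ 0} (k_n - εg_n)\, α^{n+d-1} - ε∑_{j=0}^{d-2} g'_j α^j = 0 .
\end{displaymath}
By Proposition~\ref{prop:restack_for_g} applied to the finite sequence $k-εg$, the first sum equals $∑_{i=0}^{d-1} A_i α^i$ with $A_i ∈ ℤ$ and $A_{d-1} = [k]_U - ε[g]_U$. The $A_i$ do not depend on $α$. Hence the polynomial
\begin{displaymath}
  R(X) = ∑_{i=0}^{d-1} A_i X^i - ε∑_{j=0}^{d-2} g'_j X^j
\end{displaymath}
has degree at most $d-1$ and vanishes at the $d$ distinct roots of the irreducible polynomial $P$. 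Therefore $R = 0$, which gives $A_{d-1} = 0$ and $A_i = εg'_i$ for $0 ⩽ i ⩽ d-2$.

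If $ε = 1$, then $[k]_U = [g]_U$, and both $k$ and $g$ are $U$-normalised, so $k = g$ by uniqueness of normalised expansions. If $ε = -1$, then $[k]_U + [g]_U = 0$ with both terms nonnegative, so $k = g = \period{0}$. In either case $k - εg = \period{0}$, so all the $A_i$ vanish. Hence $g' = 0$, contradicting the assumption that some entry of $g'$ is non-zero. Therefore $\ord(\nor_β(σ^{d-1}(g)+g')) < N+d$.
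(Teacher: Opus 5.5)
Your proposal is correct and follows essentially the same route as the paper: you shift the $\beta$-normalisation back by $d-1$, use Lemma~\ref{lem:unormbnorm-take2} to see it is $U$-normalised, compare coefficients across all conjugates via Proposition~\ref{prop:restack_for_g} to get equal $U$-values and hence equal sequences, and conclude $g'=0$. One improvement over the paper's write-up is that you track the sign $\varepsilon$ of $[x]_\beta$. This is needed because $\nor_\beta$ is defined through an absolute value, and the paper's proof tacitly assumes that sign is positive.
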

Note that if $g = ⋯ g_2g_1g_0⋅$ and $g' = g'_{d-2} ⋯ g'_0⋅$, then
$σ^{d-1}(g) + g' = ⋯ g_2g_1g_0g'_{d-2} ⋯ g'_0⋅$, obtained by concatenating
$g$ and~$g'$.  Note that the statement fails if all entries of~$g'$ are
equal to zero.  For, in this case, if $\ord(g) > N+d$ then $g$ is also
$β$-normalised, $\nor_β(σ^{d-1}(g) + g') = σ^{d-1}(g)$, and
$\ord(σ^{d-1}(g)) = \ord(g) + d -1 ⩾ N+2d-1$.
\begin{proof}
  Suppose by contradiction that $\ord(\nor_β(σ^{d-1}(g) + g')) ⩾ N+d$.
  Note that $\deg(\nor_β(σ^{d-1}(g) + g'))<\infty$, so
  $\nor_β(σ^{d-1}(g) + g')\in \gfin$.  Let
  $g''=\sigma^{1-d}(\nor_β(σ^{d-1}(g) + g'))$.  Then
  $\ord(g'') ⩾ N+d+(1-d) ⩾ N$.  The sequence~$g''$ is $β$-normalised as
  $\nor_β(σ^{d-1}(g) + g')$ is and shifting preserves
  $β$-normalisation. Since $\ord(g'') ⩾ N$, the sequence~$g''$ is also
  $U$-normalised by Lemma~\ref{lem:unormbnorm-take2}.  We apply
 Proposition~\ref{prop:restack_for_g}. Combining
  \begin{alignat*}{2}
    [σ^{d-1}(g)]_{α_j}  & = ∑_{i = 0}^{d-1} A_i^{(g)}α_j^i
                      & \quad &\text{for } 0 ⩽ j ⩽ d-1 \\
    [σ^{d-1}(g'')]_{α_j}  & = ∑_{i = 0}^{d-1} A_i^{(g'')}α_j^i
                        & \quad &\text{for } 0 ⩽ j ⩽ d-1 \\
    [σ^{d-1}(g'')]_{α_j} - [σ^{d-1}(g)]_{α_j} = [g']_{α_j}
        & = ∑_{i = 0}^{d-2} g'_iα_j^i & \quad &\text{for } 0 ⩽ j ⩽ d-1 
  \end{alignat*}
  we conclude that $[g]_U = A_{d-1}^{(g)} = A_{d-1}^{(g'')} = [g'']_U$.
  Since both $g$ and $g''$ are $U$-normalised, it follows that $g = g''$.
  This is a contradiction because $[g'']_β -[g]_β = [g']_ββ^{1-d} ≠ 0$.
\end{proof}

The following lemma is cited in \cite{Akiyama-1997} without proof.
We provide the proof for convenience of the reader.
\begin{lemma} \label{lem:boundedset} 
  Let $β$ be a unimodular Pisot number of degree~$d$ whose conjugates are
  $α_0 = β,α_1,…,α_{d-1}$.  For each positive real number~$M ⩾ 0$, the set
  \begin{displaymath}
    S = \left\{ g  ∈  \gfin : \text{$g$ is $\beta$-normalised} \text{
            and } |[g]_{α_j}| ⩽ M \text{ for } 0 ⩽ j ⩽ d-1\right\} 
  \end{displaymath}
  is finite.
\end{lemma}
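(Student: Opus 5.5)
The plan is to reduce the finiteness of $S$ to a lattice-point counting argument in $ℝ^d$, using unimodularity to control how negative-index digits contribute. The key observation is that for $g ∈ \gfin$ with $\ord(g) = -r$, the shifted sequence $σ^r(g)$ has $\ord ⩾ 0$, so the values $[σ^r(g)]_{α_j} = α_j^r [g]_{α_j}$ for $0 ⩽ j ⩽ d-1$ are the images of the algebraic integer $x = [σ^r(g)]_β ∈ ℤ[β]$ under the $d$ Galois embeddings. Conversely, $[g]_β ∈ β^{-r}ℤ[β] = ℤ[β]$, and this equality is exactly where unimodularity enters: $β^{-1} ∈ ℤ[β]$ because $a_0 = \pm 1$. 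So for every $g ∈ S$, the number $y ≔ [g]_β$ lies in $ℤ[β]$, and all its conjugates $[g]_{α_j}$ are bounded by $M$.

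First step: I would show that only finitely many $y ∈ ℤ[β]$ have all conjugates bounded by $M$. The map $y ↦ (σ_0(y),…,σ_{d-1}(y))$ embeds $ℤ[β]$ as a full-rank lattice in $ℝ^{r_1}×ℂ^{r_2} ≅ ℝ^d$. This is the standard Minkowski embedding; alternatively, the Vandermonde matrix in the $α_j$ is invertible, so the coordinates of $y = ∑ c_i β^i$ with $c_i ∈ ℤ$ are bounded in terms of $M$. Either way, the set of admissible values $y$ is finite.

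Second step: for each such value $y$, I would show that only finitely many $β$-normalised $g ∈ \gfin$ satisfy $[g]_β = y$. Since $g$ is $β$-normalised, it is the greedy expansion of $y ⩾ 0$, so $g = (y)_β$ is unique. One needs $y ⩾ 0$, which holds because the digits of a $β$-normalised sequence are non-negative, as expansions coming from the greedy algorithm. Hence $|S|$ is at most the number of admissible $y$, which is finite.

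The main obstacle I expect is making the first step airtight. I must verify that every $g ∈ S$ really has $[g]_β ∈ ℤ[β]$, which requires $β^{-1} ∈ ℤ[β]$, and I must state cleanly why the values $[g]_{α_j}$ are the Galois conjugates of $[g]_β$. That holds because all of them are evaluations of the same Laurent polynomial with integer coefficients at conjugate points, and the embeddings fix $ℤ$. Uniqueness in the second step should be routine, since the $β$-expansion is defined as the unique $β$-normalised representation. The one remaining check is that $β$-normalised here means non-negative digits, as in the greedy algorithm; if negative digits were allowed, I would need the additional bound coming from the normalisation inequality.
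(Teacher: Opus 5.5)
Your argument is correct and is essentially the paper's: unimodularity puts $[g]_\beta$ in $\mathbb{Z}[\beta]=\mathbb{Z}+\mathbb{Z}\beta+\cdots+\mathbb{Z}\beta^{d-1}$, invertibility of the Vandermonde matrix $(\alpha_j^i)$ turns the bounds $|[g]_{\alpha_j}|\le M$ into bounds on the integer coordinates, and so only finitely many values remain. Your second step, injectivity of $g\mapsto[g]_\beta$ on $\beta$-normalised sequences, is left implicit in the paper and is exactly where nonnegative digits are needed; your fallback for negative digits would not work, since all shifts of $1\bar{t}_1\cdots\bar{t}_m$ (with $d_\beta(1)=t_1\cdots t_m$ finite, as in this section) satisfy $\sum_{n\le k}g_n\beta^n<\beta^{k+1}$ for every $k$ and have $[g]_{\alpha_j}=0$ for all $j$, so the set would be infinite.
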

\begin{proof}
  Let $g = (g_n)_{n = -∞}^∞$ be a finite sequence.  The fact that $β$ is
  unimodular is used to express $β^n$ for negative~$n$ as a linear integer
  combination of $1,β,…,β^{d-1}$ with integer coefficients. Thus, there are
  $d$ integers $(n_i)_{i=0}^{d-1}$ such that $[g]_β=∑_{i=0}^{d-1} n_iβ^i$.
  It follows that each conjugate~$α_j$ of~$β$ for $1 ⩽ j ⩽ d-1$ satisfies
  $[g]_{α_j} = ∑_{i=0}^{d-1} n_iα_j^i$.  These $d$ equalities can be
  rewritten as the following matrix equality:
  \begin{displaymath}
    ([g]_{α_0},…,[g]_{α_{d-1}}) = (n_0,…,n_{d-1})
    \begin{pmatrix}
      1 & 1 & ⋯ & 1 \\
      α_0 & α_1 & ⋯ & α_{d-1} \\
      α_0^2 & α_1^2 & ⋯ & α_{d-1}^2 \\
      \vdots  & \vdots &   & \vdots  \\
      α_0^{d-1} & α_1^{d-1} & ⋯ & α_{d-1}^{d-1}
    \end{pmatrix}.
  \end{displaymath}
  Since the Vandermonde matrix $M = (α_j^i)_{0 ⩽ i,j ⩽ d-1}$ is invertible
  and, if the entries of the vectors $([g]_{α_0},…,[g]_{α_{d-1}})$ are
  bounded by $M$ as $g$ varies, the vectors of integers $(n_0,…,n_{d-1})$
  also have bounded entries and therefore there are finitely many possible
  vectors.
\end{proof}

\begin{lemma} \label{lem:boundedphi}
  There is a real number $K > 0$ such that for any nonzero $β$-normalised
  sequence $g\in\gfin$ with $\ord(g)⩾ 0$,
  \begin{displaymath}
    \max \{ |[g]_{α_i}| : 1 ⩽ i ⩽ d-1\} ⩾ K
        \min \{ |α_i| : 1 ⩽ i ⩽ d-1 \}^{\ord(g)}.
  \end{displaymath}
\end{lemma}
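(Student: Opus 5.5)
The plan is to reduce to the case $\ord(g)=0$ by shifting, and then to prove a uniform positive lower bound in that case by a compactness argument. Throughout I take $β$ unimodular satisfying Condition~F, as in Theorem~\ref{thm:hyp12}, and set $μ ≔ \min\{|α_i| : 1 ⩽ i ⩽ d-1\}$; note $μ>0$ since $α_i ≠ 0$.

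\emph{Step 1: reduction.} Given a nonzero $β$-normalised $g ∈ \gfin$ with $n ≔ \ord(g) ⩾ 0$, put $h ≔ σ^{-n}(g)$. Then $h$ is still $β$-normalised, $\ord(h)=0$, and $[g]_{α_i} = α_i^n[h]_{α_i}$ for each $i$. Hence
\begin{displaymath}
  \max_i |[g]_{α_i}| ⩾ μ^n \max_i |[h]_{α_i}|.
\end{displaymath}
It therefore suffices to show that
\begin{displaymath}
  K ≔ \inf\{\max_{1⩽ i⩽ d-1}|[h]_{α_i}| : h ∈ \gfin \text{ $β$-normalised},\ \ord(h)=0\}
\end{displaymath}
is strictly positive.

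\emph{Step 2: compactness.} Suppose $K=0$, and choose $h^{(k)}$ with $\max_i|[h^{(k)}]_{α_i}| → 0$. The digits of $β$-normalised sequences lie in $\{0,…,\lfloor β\rfloor\}$, so the set $X$ of left-infinite sequences $(h_n)_{n⩾0}$ with these digits, $h_0 ≠ 0$, and all heads $β$-normalised is compact in the product topology. On $X$ the maps $h ↦ ∑_{n⩾0} h_nα_i^n$ are continuous (indeed uniformly approximated by heads, since $|α_i| ⩽ \alphanorm<1$). A limit point $h^* ∈ X$ therefore satisfies $∑_{n⩾0}h^*_nα_i^n = 0$ for all $1⩽ i⩽ d-1$, while $h^*_0 ≥ 1$. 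Everything thus reduces to excluding such an $h^*$.

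\emph{Step 3: excluding $h^*$ (main obstacle).} This is essentially the statement that $0$ has only the trivial representation in the Rauzy-type fractal, so I expect it to be the hard step and to need both unimodularity and Condition~F. The first thing I would try is to work with the heads $w_n ≔ h^*_{[0:n)}$ and the numbers $x_n ≔ [w_n]_β ∈ ℤ[β]$. Their conjugates are $[w_n]_{α_i} = -∑_{m⩾ n}h^*_mα_i^m$, which is $O(\alphanorm^n)$. Moreover, by Condition~F each $w_n$ is the $β$-expansion of $x_n$, and its lowest digit sits at position~$0$.

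Next I would form differences $x_{n'}-x_n$ and also $β^{-p}$-rescalings. Using the eventual periodicity from Theorem~\ref{thm:Pisot-properties} and the finiteness in Lemma~\ref{lem:boundedset}, I would aim to produce a nonzero element of $ℤ[β] ∩ [0,M]$ whose conjugates are all arbitrarily small. Such elements form a finite set (Lemma~\ref{lem:boundedset}, using unimodularity), so their conjugate vectors are bounded away from $0$ unless the element is $0$. This is where the contradiction should come from. Concretely, the candidate is $β^{-n}(x_{n'}-x_n)$ along a subsequence on which $h^*_{[n:n')}$ repeats. Its conjugates are tiny because each is a $β$-normalised head times a power of $α_i$ with exponent at least $n$.

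If this direct route stalls, I would fall back on Lemma~\ref{lem:boundedval} and Lemma~\ref{lem:boundvalnor}. In that approach, $β$-normalising a sum with a nonzero low block forces the order to stay below $N+d$. That should give a pigeonhole among the finitely many low blocks, showing that $h^*_0 ≠ 0$ is incompatible with all conjugate values vanishing.
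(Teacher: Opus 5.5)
\paragraph{Verdict: genuine gap in Step~3.}
Your Steps~1 and~2 are correct, but they only restate the lemma. The case $\ord(g)=0$ is exactly the assertion $K>0$, and compactness turns it into the equivalent claim that no $h^*\in X$ satisfies $\sum_{n\ge 0}h^*_n\alpha_i^n=0$ for all $i$. All the content therefore sits in Step~3, where you offer candidate routes but no argument, and the concrete candidate fails. The $i$-th conjugate of $\beta^{-n}(x_{n'}-x_n)$ is $\sum_{j=0}^{n'-n-1}h^*_{n+j}\alpha_i^{j}$. Dividing by $\beta^{n}$ divides the $i$-th conjugate by $\alpha_i^{n}$, which cancels precisely the factor you rely on to call it tiny. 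Moreover, along a subsequence where the block $h^*_{[n:n')}$ repeats, these numbers are all one and the same element of $\mathbb{Z}[\beta]$. So nothing tends to $0$, and Lemma~\ref{lem:boundedset} yields no contradiction. The fallback does not close the gap either. Lemmas~\ref{lem:boundvalnor} and~\ref{lem:boundedval} concern $U$-normalised sequences and bound $\ord(\nor_\beta(\cdot))$ from above; they say nothing about the size of conjugates. In the paper they are combined with the present lemma in the proof of Theorem~\ref{thm:hyp12}, not used to prove it.

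\paragraph{What is missing, and how the paper does it.}
The right target is not one element with small conjugates, but infinitely many distinct $\beta$-normalised sequences with bounded value and bounded conjugates, which Lemma~\ref{lem:boundedset} forbids. With your $h^*$ this is immediate. Put $w_n:=h^*_{[0:n)}$ and $g^{(n)} := \sigma^{-n}(w_n)$. It is $\beta$-normalised with $\ord(g^{(n)})=-n$ and $\deg(g^{(n)})\le -1$, so $0\le [g^{(n)}]_\beta<1$. Since the full conjugate sums vanish, $[g^{(n)}]_{\alpha_i}=\alpha_i^{-n}[w_n]_{\alpha_i}=-\sum_{j\ge 0}h^*_{n+j}\alpha_i^{j}$. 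This is bounded by $\lfloor\beta\rfloor/(1-\gamma)$ uniformly in $n$; the point is that $[w_n]_{\alpha_i}$ is $O(|\alpha_i|^n)$, not merely $O(\gamma^n)$. Lemma~\ref{lem:boundedset} (this is where unimodularity enters) allows only finitely many such sequences, yet their orders $-n$ are pairwise distinct.

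The paper runs the same finiteness argument without compactness. It writes $g=g'+g''$ with $\ord(g')\ge 0$ (conjugates bounded by $M$) and $\deg(g'')<0$ (value below $M$). Finiteness of the set $S$ of $\beta$-normalised sequences with all $|[\cdot]_{\alpha_j}|\le 2M$ gives an $L$ such that $\ord(g)<L$ forces some $|[g]_{\alpha_i}|>M$. Shifting $g$ to order $L-1$ then yields the explicit constant $\mu^{1-L}M$.

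In either version Condition~F plays no role, contrary to your expectation; only unimodularity is used. Nor is Condition~F what makes $w_n$ the $\beta$-expansion of $x_n$: that is built into the definition of $X$.
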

The proof of this lemma is borrowed from~\cite{Akiyama-1997}. 
\begin{proof}
  Since $β > 1$ there is a constant~$M_0$ such that $\deg(g) ⩽ 0$ implies
  $[g]_β < M_0$ for each finite normalised sequence $g$.  Since $|α_i| < 1$
  for each $1 ⩽ i ⩽ d-1$, there is a constant~$M_1$ such that
  $\ord(g) ⩾ 0$ implies $|[g]_{α_i}| < M_1$ for each finite sequence
  $g = (g_n)_{n = -∞}^∞$ and each~$i$.  Let $M ≔ max(M_0, M_1)$.

  A finite sequence $g$ can be decomposed as a sum of its left and right
  infinite parts, $g = g' + g''$ where $\ord(g') ⩾ 0$ and $\deg(g'') < 0$.
  By definition of~$M$, $[g'']_β < M$ and $|[g']_{α_i}| < M$ for
  $1 ⩽ i ⩽ d-1$.  By Lemma~\ref{lem:boundedset}, the set
  \begin{displaymath}
    S ≔ \left\{ g ∈ \gfin: \text{ $g$ is $β$-normalised},
                 |[g]_{α_i}| ⩽ 2M \text{ for each } i ∈ \{0,…,d-1\}\right\}
  \end{displaymath}
  is finite, since each finite $β$-normalised sequence has distinct
    $[g]_β$.  Let $L ≔ \min( \{ \ord(g) : g ∈ S\} \cup\{ -1\})$.  We claim
  that if $\ord(g) < L$, then there exists an index~$i$ such that
  $|[g]_{α_i}| > M$.  To see this, suppose that $\ord(g) <L$.  Since
  $\ord(g)<L<0$, we have $ \ord(g'')=\ord(g)<L$ and so~$g''\notin S$, by
  definition of $L$.  Since $[g'']_β < M$, there exists an integer
  $1 ⩽ i ⩽ d-1$ such that $|[g'']_{α_i}| > 2M$.  Since $|[g']_{α_i}| < M$,
  it follows that $|[g]_{α_i}| ⩾ |[g'']_{α_i}| - |[g']_{α_i}| > M$.  This
  proves our claim.
  
  Let $g$ be a nonzero finite $β$-normalised sequence and let
  $g^* ≔ σ^{L-\ord(g)-1}(g)$. By definition $\ord(g^*) = L-1$ and thus
  there exists an index~$i$ such that $|[g^*]_{α_i}| > M$ and
  $|[g]_{α_i}| = |α_i|^{1+\ord(g)-L} |[g^*]_{α_i}| > |α_i|^{1+\ord(g)-L}M$.
  Setting $K=|α_i|^{1-L}M$, we have $K>0$ and
  $|[g]_{α_i}| >K |α_i|^{\ord(g)}$.  The result follows.
\end{proof}

\begin{proof}[Proof of Theorem~\ref{thm:hyp12}]
  We prove Hyp.~1. First let $g$ be a finite $U$-normalised sequence with
  $\ord(g) ⩾ 0$.  By Lemma~\ref{lem:boundvalnor},
  $\ord(\nor_\beta(g)) ⩽ \ord(g) + N$. Let
  $λ = \min \{ |α_i| : 1 ⩽ i ⩽ d-1 \}$. Since $g$ is not identically
  zero, there exists, by Lemma~\ref{lem:boundedphi}, an index~$i$ such that
  \begin{equation} \label{eq:finite}
    |[g]_{α_i}| = |[\nor_β(g)]_{α_i}| > Kλ^{\ord(g)+N}.
  \end{equation}  
  Note that here we are using the Condition~F twice: when asserting the
  equality, for otherwise $|[\nor_β(g)]_{α_i}|$ may not converge, and also
  because then we cannot apply Lemma~\ref{lem:boundedphi}.  Therefore
  $\rval(g) ≠ 0$. Now if $g ∈ \overline{ℕ}_U$, \eqref{eq:finite} is true
  for any truncation of $g^{(n)} = g_n \cdots g_0$ of $g$, with
  $\ord(g^{(n)})=\ord(g) $ and
  $\ord(\nor_\beta(g^{(n)})) ⩽ \ord(g) + N$ for any $n$. The result
  follows since there is an $i$ such that
  $|[g]_{α_i}|= \lim_{n→∞} |[g^{(n)}]_{α_i}|⩾  Kλ^{\ord(g)+N} $.

  To prove Hyp.~2,  we must show that
  \begin{displaymath}
    \inf \left\{\|\rval((n)_U) - \mathbf{p}\|_∞: n ∈ ℕ, \mathbf{p} ∈ 𝕃∖ \{\mathbf{0}\}  \right\} > 0.
  \end{displaymath}
  Let $\mathbf{p} = ∑_{i=0}^{d-2}g'_iα^i$ with
  $\mathbf{p} ≠ \mathbf{0} ∈  \mathbb{L}\backslash \{\mathbf{0}\}$.  Let
  $g'=(-g'_i)_{i=0}^{d-2}$; by assumption it has a nonzero entry.  Let $g$
  be a finite $U$-normalised sequence with $\ord(g) ⩾ 0$.  By
  Lemma~\ref{lem:boundedval}, $\ord(\nor_β(σ^{d-1}(g) + g')) < N+d$.  By
  Lemma~\ref{lem:boundedphi}, for some $i$ we have
  $|[\nor_β(σ^{d-1}(g) + g')]_{α_i}| ⩾ Kλ^{N+d}$.  This
  implies that $\|\rval (g) - \mathbf{p}\|_∞ ⩾ Kλ^{N+d}$.
  Here also we need Condition~F. This is true for any finite $U$-normalised
  $g$.
\end{proof}

\section{ Condition F}\label{sec:condition-F}

In this section we show that our condition of preservation of zeros is
equivalent to the now classical Condition~F. Recall that we used this to prove
Corollary~\ref{cor:hyp12}.

\subsection{From preservation of zeros to Condition F}

The set of $U$-expansions of natural numbers for a Pisot numeration is
clearly related to the set of finite $β$-expansions for corresponding
$β$-numeration; for example, one can mechanically normalise the same way
for both the $U$- and $β$-numeration. However one can run into issues close
to the radix point. For example, with our choice of initial conditions for
the Zeckendorf numeration, all normalised expansions start with~$0$, which
is not the case for the $β$-numerations for $β = \gmean$.  Nevertheless,
provided we are sufficiently far from the radix point, these sets of
expansions are the same.

\begin{lemma} \label{lem:graph-cycle}
  Let $U = (u_n)_{n ⩾ 0}$ be a Pisot numeration, and let $g$ be a finite
  sequence. There exists an integer $N=N(U, \|g\|_∞)$ such that if there
  exists $n ⩾ N$ such that $[g0^n]_U=0$, then there exist infinitely many
  $n ⩾ N$ with $[g0^n]_U=0$.
\end{lemma}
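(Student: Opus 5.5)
The plan is to show that the sequence $s_n ≔ [g0^n]_U = ∑_{k} g_k u_{k+n}$ is, up to a constant, the orbit of one algebraic integer. That integer vanishes as soon as it is small, and once it vanishes every $s_n$ vanishes. Write $G(X) ≔ ∑_{k=0}^{\deg g} g_kX^k ∈ ℤ[X]$. By the Binet formula from the proof of Lemma~\ref{lem:boundun}, $u_m = ∑_{j=0}^{d-1}λ_jα_j^m$ for all $m ⩾ 0$. Substituting gives
\begin{displaymath}
  s_n = ∑_{j=0}^{d-1} λ_j G(α_j)\,α_j^n \qquad (n ⩾ 0),
\end{displaymath}
with $λ_0 = λ > 0$.

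The key quantity is $x_n ≔ G(β)β^n ∈ ℤ[β]$. It is an algebraic integer whose Galois conjugates are $G(α_j)α_j^n$ for $1 ⩽ j ⩽ d-1$. Since $|α_j| ⩽ \alphanorm < 1$, these conjugates are bounded by $\|g\|_∞/(1-\alphanorm)$ uniformly in $n$ and in the length of $g$. The uniformity in the length is the point that matters, since $N$ may depend only on $\|g\|_∞$. In particular, $\bigl|∑_{j ⩾ 1}λ_jG(α_j)α_j^n\bigr| ⩽ C\|g\|_∞\alphanorm^n$ with $C ≔ ∑_{j⩾1}|λ_j|/(1-\alphanorm)$.

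Now suppose $s_n = 0$ for some $n ⩾ N$. Then $λ x_n = -∑_{j⩾1}λ_jG(α_j)α_j^n$, so $|x_n| ⩽ (C/λ)\|g\|_∞\alphanorm^n$. This bounds the norm:
\begin{displaymath}
  |N_{ℚ(β)/ℚ}(x_n)| ⩽ \frac{C}{λ}\|g\|_∞\alphanorm^n\Bigl(\frac{\|g\|_∞}{1-\alphanorm}\Bigr)^{d-1}.
\end{displaymath}
Choose $N = N(U,\|g\|_∞)$ so that the right-hand side is $<1$ for all $n ⩾ N$. A nonzero algebraic integer has norm of absolute value at least $1$, so $x_n = 0$, hence $G(β) = 0$. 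Since $G ∈ ℤ[X]$ and $P$ is the (irreducible) minimal polynomial of $β$, $P$ divides $G$. Therefore $G(α_j) = 0$ for every $j$, and the displayed Binet identity gives $s_m = 0$ for every $m ⩾ 0$, in particular for infinitely many $m ⩾ N$.

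The main obstacle I expect is the uniformity: a naive argument ("$s_n ≈ λG(β)β^n$ grows unless $G(β)=0$") gives a threshold depending on $|G(β)|$, which is not controlled by $\|g\|_∞$ alone. The norm argument replaces a lower bound on $|G(β)|$ by the integrality of $x_n$, and this is where the bounded conjugates are used. The remaining checks are routine: that Binet holds for all $m ⩾ 0$ with standard (or any) initial conditions, and that $λ_0 ≠ 0$. Both are already contained in Lemma~\ref{lem:boundun}.
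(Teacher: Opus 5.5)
Your proof is correct, and it takes a genuinely different route from the paper.

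The paper's argument is purely automata-theoretic. It invokes Theorem~\ref{thm:regularity-0}, the regularity of the set of expansions of zero over the alphabet $\{-\|g\|_\infty,\ldots,\|g\|_\infty\}$. It takes $N$ to be the number of states of that automaton and pumps a cycle inside the block $0^n$, so that $g0^{n+kp}$ is accepted for some $p \geq 1$ and all $k \geq 0$. Granting the black box, this is a two-line proof. However, $N$ is not explicit, and the conclusion is only an arithmetic progression of zeros.

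You replace the automaton by the integrality of $\mathrm{N}_{\mathbb{Q}(\beta)/\mathbb{Q}}(G(\beta)\beta^n)$. The Pisot property bounds every conjugate $G(\alpha_j)\alpha_j^n$ independently of the length of $g$. The relation $s_n=0$ then makes the remaining factor $G(\beta)\beta^n$ exponentially small, which forces the norm below $1$. This gives an explicit $N$ and a strictly stronger conclusion: $P \mid G$, hence $[g0^m]_U=0$ for every $m\geq 0$.

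In particular, you obtain Lemma~\ref{lem:dividesPw} directly, and that is exactly what the paper uses Lemma~\ref{lem:graph-cycle} for. The paper needs infinitely many zeros there precisely to avoid the uncontrolled size of $|P_g(\beta)|$ that you point out: it lets $n\to\infty$ in $|\lambda_0\beta^nP_g(\beta)|\leq K\alphanorm^n/(1-\alphanorm)$. Your norm argument removes that detour altogether. What the paper's route buys is brevity and reuse of machinery it already cites; what yours buys is self-containment, explicit constants, and the divisibility statement in one step.
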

\begin{proof}
  Let $B = \{-\|g\|_∞, …, \|g\|_∞\}$.  By Theorem~\ref{thm:regularity-0},
  there exists an automaton over the alphabet~$B$ accepting the set
  $\{ w ∈ B^* : [w]_U = 0\}$.  Let $N = N(U, \|g\|_∞)$ be the number of
  states of this automaton.  If $[g0^n]_U = 0$, then the finite sequence
  $g0^n$ is accepted by the automaton.  Now if furthermore $n ⩾ N$, then
  the pumping lemma gives the result.
\end{proof}

For a finite sequence $g = g_k ⋯ g_0$, let $P_g$ be the polynomial
$P_g(X) = ∑_{i = 0}^kg_iX^i$.  Note that for finite sequences $g$ and~$g'$,
$P_{g+g'} = P_{g} + P_{g'}$.
\begin{lemma} \label{lem:dividesPw}
  Let $U = (u_n)_{n ⩾ 0}$ be a Pisot numeration with minimal
  polynomial~$P$, and let $g$ be a finite sequence.   Then there exists an
  integer $N = N(U, \|g\|_∞)$ such that if $\ord(g) ⩾ N$ and $[g]_U = 0$,
  then $P$ divides $P_g$.
\end{lemma}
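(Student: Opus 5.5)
Write $g = g'0^{N'}$ where $g' = σ^{-\ord(g)}$ applied to $g$, i.e.\ $g'$ is the finite sequence $g_{\deg(g)}\cdots g_{\ord(g)}$ and $N' = \ord(g) ⩾ N$. The plan is to combine Lemma~\ref{lem:graph-cycle} with a linear-recurrence argument. Lemma~\ref{lem:graph-cycle} gives $[g'0^n]_U = 0$ for infinitely many $n$. In fact the pumping argument gives more: there is a period $p ⩾ 1$ (at most the number of states) such that $[g'0^{N'+jp}]_U = 0$ for every $j ⩾ 0$. We take $N = N(U,\|g\|_∞)$ as in Lemma~\ref{lem:graph-cycle}. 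Note that $\|g'\|_∞ = \|g\|_∞$, so the constant depends only on $\|g\|_∞$.

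Next, consider the integer sequence $s_n ≔ [g'0^n]_U = ∑_i g'_i u_{i+n}$. It is defined for all $n ⩾ 0$, and by the recurrence it satisfies the same linear recurrence with characteristic polynomial $P$. By the Binet formula from the proof of Lemma~\ref{lem:boundun}, $u_n = ∑_k λ_k α_k^n$, so
\begin{displaymath}
  s_n = ∑_{k=0}^{d-1} λ_k P_{g'}(α_k)\, α_k^n .
\end{displaymath}
The vanishing of $s_{N'+jp}$ for all $j ⩾ 0$ gives
\[
∑_k λ_k P_{g'}(α_k)α_k^{N'}(α_k^p)^j = 0 \quad\text{for all } j ⩾ 0 .
\]
If the numbers $α_k^p$ are pairwise distinct, a Vandermonde argument (using $d$ consecutive values of $j$) forces $λ_kP_{g'}(α_k)α_k^{N'} = 0$ for each $k$. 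Distinctness holds because $|β^p| > 1 > |α_k^p|$ for $k ⩾ 1$, so at least the $k=0$ term is isolated. The Pisot dominance alone is enough here: dividing by $β^{pj}$ and letting $j → ∞$ gives $λ_0 P_{g'}(β)β^{N'} = 0$.

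Since $λ_0 > 0$ and $β ≠ 0$, we get $P_{g'}(β) = 0$. Because $P$ is the minimal polynomial of $β$ over $ℚ$ and $P_{g'} ∈ ℤ[X]$, $P$ divides $P_{g'}$ in $ℚ[X]$, hence in $ℤ[X]$ by Gauss's lemma ($P$ is monic). Finally $P_g = X^{N'}P_{g'}$, so $P$ divides $P_g$.

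The main obstacle is extracting the full arithmetic progression of zeros from the pumping lemma rather than just infinitely many zeros. I will re-run the pumping argument from the proof of Lemma~\ref{lem:graph-cycle}: while reading $g'0^{n}$, the automaton visits a repeated state within the block of zeros, which produces a loop labelled $0^p$. Iterating this loop keeps the run accepting, which yields exactly the progression. I expect this step to be routine once stated carefully.

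Alternatively, infinitely many zeros already suffice. Since $s_n/β^n → λ_0P_{g'}(β)$, along any infinite sequence of $n$ with $s_n = 0$ this limit must be $0$. This avoids the progression altogether and is the route I would actually write.
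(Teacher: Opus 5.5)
Your proposal is correct and is essentially the paper's proof: write $g = g'0^{\ord(g)}$, use Lemma~\ref{lem:graph-cycle} to get infinitely many $n$ with $[g'0^n]_U = 0$, and use the Binet estimate $[g'0^n]_U = \lambda_0\beta^n P_{g'}(\beta) + O(\gamma^n)$ with $\lambda_0 \neq 0$ to force $P_{g'}(\beta)=0$, hence $P \mid P_{g'}$ and $P \mid X^{\ord(g)}P_{g'} = P_g$. The arithmetic-progression and Vandermonde detour is unnecessary, and your justification there that the $\alpha_k^p$ are distinct only covered $k=0$. The route you say you would actually write is exactly the paper's, and you make the stripping of trailing zeros more explicit than the paper does.
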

\begin{proof}
  Let $g$ be the finite sequence $g_k ⋯ g_0$ and recall that
  $α_0 = β, α_1, …, α_{d-1} $ are the roots of~$P$.  There exist
  coefficients $(λ_i)_{0⩽ i ⩽ d-1}$ such that $u_n = ∑_{i=0}^{d-1}λ_iα_i^n$
  for each $n ⩾ 0$.  Recall the definition of $\alphanorm$
  from~\eqref{eq:norm}.  We have
  \begin{align*}
    \left|[g0^n]_U -λ_0β^nP_g(β)\right|
      & = ∑_{j=0}^k g_j(u_{n+j} -λ_0β^{n+j}) \\
      & = ∑_{j=0}^k g_j \left(∑_{i=0}^{d-1} λ_iα_i^{n+j} -λ_0β^{n+j}\right) \\
      & = ∑_{j=0}^k g_j∑_{i=1}^{d-1} λ_iα_i^{n+j} \\
      & ⩽ ∑_{j=0}^k |g_j|∑_{i=1}^{d-1} |λ_i|\alphanorm^{n+j} \\
      & ⩽ \alphanorm^n\|g\|_∞∑_{j=0}^∞∑_{i=1}^{d-1} |λ_i|\alphanorm^j \\
      & ⩽ \frac{K\alphanorm^n}{1-\alphanorm} 
  \end{align*}
  where $K = (d-1)\|g\|_∞\max_{1 ⩽ i ⩽ d-1} |λ_i|$.  By
  Lemma~\ref{lem:graph-cycle}, $[g0^n]_U = 0$ for some sequence of
  integers~$n$. Since $K\alphanorm^n/(1-\alphanorm)$ converges to zero when
  $n → ∞$, and $β>1$, $P_g(β)=0$.  Since $P$ is the minimal polynomial
  of~$β$, $P$ divides $P_g$.
\end{proof}

We can now prove that if a Pisot numeration preserves zeros, then the
corresponding $β$-numeration satisfies the Condition~F.  
  Recall that we  defined $[g_n ⋯ g_0]_β = ∑_{i=0}^n g_iβ^i$ in
  Section~\ref{sec:notation}.

\begin{proposition}\label{prop:zero-preserving-then-F}
  Let $U = (u_n)_{n ⩾ 0}$ be a Pisot numeration associated to~$β$.  If the
  numeration~$U$ preserves zeros, then $β$ satisfies the Condition~F.
\end{proposition}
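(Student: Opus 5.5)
Fix $x ∈ ℤ[β]$ with $x ⩾ 0$; we must show that $x ∈ \fin(β)$. The case $x = 0$ is trivial, so assume $x > 0$. The idea is to realise $xβ^n$, for suitably large $n$, by a $U$-expansion shifted far from the radix point. We then normalise it with respect to~$U$. Preservation of zeros keeps the normalised expansion far from the radix point, and there $U$-normalisation and $β$-normalisation coincide.

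Write $x = P_g(β)$ for some finite sequence $g = g_k ⋯ g_0$ over~$ℤ$, and set $c ≔ \|g\|_∞$. For $n ⩾ 0$ consider the integer $[g0^n]_U$. The estimate in the proof of Lemma~\ref{lem:dividesPw} (equivalently, Lemma~\ref{lem:boundun}) gives $|[g0^n]_U - λβ^n x| ⩽ K\alphanorm^n/(1-\alphanorm)$ with $λ > 0$. Hence $[g0^n]_U > 0$ for all large~$n$, and $w_n ≔ \unor(g0^n)$ is defined.

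Since $\|g0^n\|_∞ ⩽ c$ and $\ord(g0^n) ⩾ n$, preservation of zeros gives $\ord(w_n) ⩾ n - K_c$. We take $n$ large enough that $n - K_c ⩾ N$, where $N$ is the constant of Lemma~\ref{lem:unormbnorm-take2}. Since $w_n$ is $U$-normalised with order at least $N$, that lemma shows $w_n$ is also $β$-normalised. It is moreover finite, so it is a finite $β$-normalised sequence.

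Next let $h ≔ w_n - g0^n$. Then $[h]_U = 0$, $\|h\|_∞ ⩽ c + \max D$, and $\ord(h) ⩾ \min(\ord(w_n), n) ⩾ n - K_c$. Let $N' ≔ N(U, c+\max D)$ be the constant of Lemma~\ref{lem:dividesPw}; it depends only on this uniform bound on $\|h\|_∞$. Enlarging $n$ so that also $n - K_c ⩾ N'$, that lemma shows $P$ divides $P_h$, so $P_h(β) = 0$. Therefore
\begin{displaymath}
  [w_n]_β = P_{w_n}(β) = P_{g0^n}(β) = β^n P_g(β) = β^n x .
\end{displaymath}
So $xβ^n$ has the finite $β$-expansion $w_n$, and dividing by $β^n$ (shifting) shows $x ∈ \fin(β)$. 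Hence $ℤ[β] ⊂ \fin(β)$, which is Condition~F.

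The delicate step is this last application of Lemma~\ref{lem:dividesPw}. The constant given by that lemma depends on $\|h\|_∞$, and we need it not to depend on~$n$. This works because $\|h\|_∞ ⩽ c + \max D$ for every~$n$, so a single threshold serves all $n$. We also need $\ord(h)$ to grow with $n$, and this is exactly what preservation of zeros supplies. The only other care needed is to take $n$ large enough for all three conditions at once: positivity of $[g0^n]_U$, $n - K_c ⩾ N$, and $n - K_c ⩾ N'$.
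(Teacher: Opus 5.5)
Your proposal is correct and follows the paper's proof essentially verbatim. You shift $g$ far from the radix point, $U$-normalise it, use zero preservation to keep the order large, and then apply Lemma~\ref{lem:dividesPw} and Lemma~\ref{lem:unormbnorm-take2} to see that $\unor(g0^n)$ is a finite $\beta$-expansion of $\beta^n x$. You also make explicit two points the paper leaves implicit: restricting to $x>0$ so that $[g0^n]_U>0$ and $\unor$ is defined, and fixing the uniform bound $c+\max D$ so that the threshold from Lemma~\ref{lem:dividesPw} does not depend on $n$.
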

\begin{proof}
  Let $x = ∑_{i=0}^ka_iβ^i$ be an element of $ℤ[β]$ and let
  $c = \max \{ |a_i| : 0 ⩽ i ⩽ k\}$.  Let $K_c$ be the constant guaranteed
  by Definition~\ref{def:preserve-zeros}.  Let $M$ be an integer larger
  than each of those guaranteed by Lemmas \ref{lem:unormbnorm-take2}
  and~\ref{lem:dividesPw}.  We consider the finite sequences 
  $g = a_k⋯ a_00^{M+K_c}$ and $g' = \unor(g)$.  Since $\ord(g) ⩾ M+K_c$
  and $\|g\|_∞ ⩽ c$, $\ord(g') ⩾ M$. Since $[g]_U = [g']_U$ and
  $\ord(g-g') ⩾ M$, $[g]_β = [g']_β$ by Lemma~\ref{lem:dividesPw}.  Since
  $\ord(g') ⩾ M$ and $g'$ is $U$-normalised, it is also $β$-normalised.
  This shows that $β^{M+K_c}x$ belongs to $\fin(β)$ and thus $x ∈ \fin(β)$.
\end{proof}

\subsection{From Condition F to preservation of zeros}
\label{sec:condition-F-to-perservation}

The following lemma requires the additional assumption that $d_β(1)$ is
finite, an assumption which is fulfilled as soon as $β$ satisfies the
Condition~F. It tells us that, away from the radix point, the factors of
elements of~$ℕ_U$ and the $β$-shift agree, so that the recurrence defined
by $d_β(1)$ determines these languages.

The following lemma states that a finite sequence~$g ∈ \{0, 1\}^*$ where
the 1's are far enough apart is the normalised expansion of some integer,
that is, belongs to~$ℕ_U$.
\begin{lemma}\label{lem:spacedexpand}
  Let $U = (u_n)_{n ⩾ 0}$ be a Pisot numeration associated to~$β$.  There
  exists an integer~$K$ such that for each finite
  sequence~$g ∈ \{0, 1\}^*$, if $\ord(g) ⩾ K$ and if for all
  $0 ⩽ i < j < |g|$, $g_ig_j = 1$ implies $j-i ⩾ K$, then $g = (n)_U$ for
  some integer~$n$.
\end{lemma}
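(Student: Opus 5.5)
A finite sequence $g$ is $U$-normalised (i.e.\ $g=(n)_U$ with $n=[g]_U$) if and only if $\sum_{i=0}^{k} g_iu_i < u_{k+1}$ for every $k$. So I will verify this inequality directly for spaced $\{0,1\}$-sequences, using the growth estimate of Lemma~\ref{lem:boundun}. That lemma gives $|u_n-λβ^n|⩽ K_0\alphanorm^n$ with $0<\alphanorm<1$.

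Fix $k$, and let $i_1>i_2>\dots>i_r$ be the positions $⩽ k$ where $g$ equals $1$. If $r=0$ there is nothing to prove. The spacing hypothesis gives $i_s⩽ i_1-(s-1)K$, and $i_1⩽ k$. Hence
\begin{displaymath}
  \sum_{i⩽ k} g_iu_i ⩽ \sum_{s⩾ 1}\bigl(λβ^{i_1-(s-1)K}+K_0\alphanorm^{i_s}\bigr)
  ⩽ \frac{λβ^{i_1}}{1-β^{-K}} + \frac{K_0\alphanorm^{K}}{1-\alphanorm}.
\end{displaymath}
For the error term I use $\ord(g)⩾ K$, so that $i_s⩾ K$. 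On the other side,
\begin{displaymath}
  u_{k+1}⩾ u_{i_1+1}⩾ λβ^{i_1+1}-K_0\alphanorm^{i_1+1},
\end{displaymath}
where the first inequality needs $(u_n)$ to be nondecreasing for $n⩾ K$. This holds for large $n$ by Lemma~\ref{lem:boundun}, so I enlarge $K$ if necessary.

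It therefore suffices to show
\begin{displaymath}
  λβ^{i_1}\Bigl(β-\frac{1}{1-β^{-K}}\Bigr) > K_0\alphanorm^{K}\Bigl(1+\frac{1}{1-\alphanorm}\Bigr),
\end{displaymath}
using $\alphanorm^{i_1+1}⩽\alphanorm^{K}$.

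Since $β>1$, the quantity $β-1/(1-β^{-K})$ tends to $β-1>0$ as $K\to\infty$. So for $K$ large the left side is at least $λβ^{K}(β-1)/2$, which grows without bound, while the right side tends to $0$. Choosing $K$ large enough therefore gives the strict inequality. The difference of integers $u_{k+1}-\sum_{i⩽ k}g_iu_i$ is then positive, as required. This holds for every $k$ with $i_1$ defined; for $k$ beyond $\deg(g)$ the head sum is constant while $u_{k+1}$ does not decrease, so those $k$ are covered as well.

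The one delicate point is exactly this monotonicity and the interaction with the radix point. The constant $K$ must dominate the threshold beyond which $(u_n)$ is increasing and $u_n$ is well approximated by $λβ^n$. The hypothesis $\ord(g)⩾ K$ is what keeps all nonzero digits in that regime. Everything else is a geometric-series estimate, and no use of Condition~F or of $d_β(1)$ is needed.
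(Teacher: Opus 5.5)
Your proof is correct and follows essentially the same route as the paper. Both check the greedy condition $\sum_{i\le k} g_iu_i<u_{k+1}$ for every head, using the estimate $|u_n-\lambda\beta^n|\le K_0\alphanorm^n$ of Lemma~\ref{lem:boundun} and a geometric series over the $K$-spaced ones; the only difference is that you anchor the bound at the highest one $i_1\le k$ and invoke eventual monotonicity of $(u_n)$ explicitly, whereas the paper compares directly with $u_{k+1}$ after pushing the ones to positions $k,k-K,\dots$, which uses that same monotonicity implicitly.
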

\begin{proof}
  By Lemma~\ref{lem:boundun}, there exist constants $K'$ and~$λ$ such
  that $|u_n - λ β^n| ⩽ K'\alphanorm^n$. Let $K$ be a constant to be
  chosen later and let $g$ be a finite sequence satisfying the hypotheses
  of the lemma.  In order to show that $g = (n)_U$ for some integer~$n$, it
  suffices to show that for each integer~$k > K$ the relation
  $∑_{i=0}^k g_iu_i < u_{k+1}$ holds.
  \begin{align*}
    ∑_{i=0}^k g_iu_i & ⩽ ∑_{i=0}^{⌊k/K⌋-1} u_{k-iK} \\
                     & ⩽  ∑_{i=0}^{⌊k/K⌋-1} \lambdaβ^{k-iK}  + K' \alphanorm^K/(1-\alphanorm^K) \\
                     & ⩽  \lambda(β^{k+2K}-β^{K})/(β^{K}-1) + K'\alphanorm^K/(1-\alphanorm^K) \\
                     & ⩽  \lambdaβ^k β^{K}/(β^K-1) + K'\alphanorm^K/(1-\alphanorm^K).
  \end{align*}
  Then we choose $K$ large enough such that $β^{K}/(β^K-1) < (3+β)/4$ and
  $K'\alphanorm^K/(1-\alphanorm^K) < \lambdaβ^K(β-1)/4$ and
  $u_{k+1}/β^k > \lambda(1+β)/2$ for each $k > K$.  The first inequality is
  possible because $β^K/(β^K-1)$ converges to~$1$ and $1 < (3+β)/4$.  The
  second inequality is possible because $K'\alphanorm^K/(1-\alphanorm^K)$
  converges to~$0$ while $\lambdaβ^K(β-1)/4$ converges to~$+∞$.  The last
  inequality is possible because $u_{k+1}/β^k$ converges to~$\lambdaβ$ and
  $(1+β)/2 < β$.  Finally we get that for each $k > K$,
  \begin{align*}
    ∑_{i=0}^k g_iu_i & ⩽ \lambdaβ^k(3+β)/4 + \lambdaβ^K(β-1)/4 \\
                    & ⩽ \lambdaβ^k (1+β)/2 < u_{k+1}.
                    \end{align*}
                    The result follows.
\end{proof}

The following lemma shows that each finite sequence can be decomposed as a
sum of normalised finite sequences where the number of components only
depends on the maximal value occurring in it.
\begin{lemma} \label{lem:decompsum}
  Let $U = (u_n)_{n ⩾ 0}$ be a Pisot numeration, and let $g$ be a finite
  sequence.  For each integer~$c ⩾ 0$, there exists an integer $k = k(c)$
  such that if $\|g\|_∞ ⩽ c$ and $\ord(g)⩾ K$, then there exist $2k$
  non-negative integers $n_1,…, n_k, m_1,…, m_k$ such that
  $g = (n_1)_U + ⋯ + (n_k)_U - (m_1)_U - ⋯ - (m_k)_U$.
\end{lemma}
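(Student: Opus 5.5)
The plan is to split $g$ first by sign and value, and then by position. The constant $K$ in the hypothesis $\ord(g)\geqslant K$ is taken to be the constant of Lemma~\ref{lem:spacedexpand}. Write $g = g^+ - g^-$, where $g^+_n = \max(g_n,0)$ and $g^-_n = \max(-g_n,0)$. Both are finite, non-negative, bounded by $c$, and satisfy $\ord(g^\pm)\geqslant \ord(g)\geqslant K$. It therefore suffices to write each non-negative finite sequence $f$ with $\|f\|_\infty\leqslant c$ and $\ord(f)\geqslant K$ as a sum of at most $k(c)/1$ normalised expansions. Empty slots are padded with $(0)_U$ (the empty word), so that exactly $k$ terms appear with each sign.

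Next decompose by value. Since $0\leqslant f_n\leqslant c$, we have $f = \sum_{j=1}^{c} f^{(j)}$, where $f^{(j)}_n = 1$ if $f_n\geqslant j$ and $0$ otherwise. Each $f^{(j)}$ lies in $\{0,1\}^*$ and still has $\ord(f^{(j)})\geqslant K$.

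Then decompose by position modulo $K$, to spread the $1$'s apart. For a $\{0,1\}$-sequence $e$ with $\ord(e)\geqslant K$ and $0\leqslant r<K$, let $e^{(r)}$ keep only the entries of $e$ at indices $n\equiv r \pmod K$ and set all others to zero. Then $e=\sum_{r=0}^{K-1} e^{(r)}$. In each $e^{(r)}$, any two $1$'s are at positions differing by a positive multiple of $K$, hence at distance at least $K$, and $\ord(e^{(r)})\geqslant \ord(e)\geqslant K$. Lemma~\ref{lem:spacedexpand} applies to each $e^{(r)}$ and gives $e^{(r)}=(n)_U$ for some integer $n\geqslant 0$. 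If $e^{(r)}$ is the zero sequence, it is $(0)_U$.

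Combining the two decompositions, $g^+$ and $g^-$ are each a sum of at most $cK$ sequences in $\mathbb{N}_U$. Hence $k(c)=cK$ works, and the identity holds componentwise as sequences in $𝔾$, which is what is claimed; equality of $U$-values is not needed.

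The only point requiring care is the use of Lemma~\ref{lem:spacedexpand}. Its hypothesis is about the spacing of the $1$'s and about $\ord$. Residue classes modulo $K$ automatically give spacing exactly $K$ or more, and restricting to a residue class cannot decrease $\ord$. So no genuine obstacle is expected, only bookkeeping of the constant $K$ and of padding by zero summands.
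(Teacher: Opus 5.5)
Your proof is correct and follows essentially the same route as the paper. You split by sign, then split each $\{0,1\}$-sequence into its residue classes modulo the constant $K$ of Lemma~\ref{lem:spacedexpand}, and handle larger entries by reducing to the $\{0,1\}$ case. Your level-set decomposition $f=\sum_{j=1}^{c} f^{(j)}$ just makes explicit the paper's terse ``induction on $\|g\|_\infty$'', and it gives the concrete bound $k(c)=cK$.
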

\begin{proof}
  The result is trivial if $\|g\|_∞ = 0$. First suppose that all entries of
  $g$ are nonnegative. If $\|g\|_∞ = 1$, let $K$ be the integer provided by
  Lemma~\ref{lem:spacedexpand}.  For each $0 ⩽ j< K$, define the finite
  sequences $x^{(j)}$ by
  \begin{displaymath}
    x^{(j)}_i =
    \begin{cases}
      g_i & \text{if $i ≡ j \bmod K$ and $i ⩾ K$} \\
      0   & \text{otherwise.}
    \end{cases}
  \end{displaymath}
  We have $g = x^{(0)} + ⋯ + x^{(K-1)} $ since $\ord(g)⩾ K$.  It is also
  clear that each finite sequence~$x^{(j)}$ for $0 ⩽ j < K$ satisfies the
  hypothesis of Lemma~\ref{lem:spacedexpand}.  By taking
  $m_1= \cdots = m_k =0$, this completes the case $\|g\|_∞ = 1$ where all
  entries of $g$ are nonnegative. If $g$ also has negative entries,
    write $g=g^{+} - g^{-}$ where
  \begin{displaymath}
    g^{+} _i=
    \begin{cases}
      g_i & \text{ if $g_i ⩾ 0$} \\
      0 & \text{otherwise}
    \end{cases}
    \qquad\text{ and }\qquad
    g^{-} _i=
    \begin{cases}
      -g_i & \text{ if $g_i< 0$} \\
      0 & \text{otherwise.}
    \end{cases}
  \end{displaymath}
  Since $g^{+}$ and $g^{-}$ have only nonnegative entries, we can conclude
  that there exist $n_1,…, n_k, m_1,…, m_k$ such that
  $g^{+} = (n_1)_U + ⋯ + (n_k)_U $ and $g^{-} = (m_1)_U +⋯ +(m_k)_U$, and
  the result follows.  For $\|g\|_∞ > 1$, the proof is carried out by
  induction on~$\|g\|_∞$.
\end{proof}

In order to prove Proposition~\ref{pro:bnorm2unorm}, we need the following
result from~\cite[Prop.~2]{Frougny-Solomyak-1992}.  Let us denote by
$\fin_k(β)$ the set of numbers $x$ such that $\ord(\nor_β(x)) ⩾ -k$. More
formally, it is the set of numbers with normalised
$β$-expansion~$α = ∑_{i ⩽ n} a_iβ^i$ such that $a_i = 0$ for $i ⩽ -k$.
\begin{proposition}[Frougny-Solomyak \cite{Frougny-Solomyak-1992}]
  \label{pro:frougny-solomyak}
  Let $β$ be a Pisot number.  There exists an integer $ℓ = ℓ(β)$ having the
  following property.  If $x, y ∈ \fin_n(β)$ for some integer~$n$ and
  $x > y$, then $x+y ∈ \fin_{n+ℓ}(β)$ and $x-y ∈ \fin_{n+ℓ}(β)$.
\end{proposition}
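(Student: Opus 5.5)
We reduce to a statement about fractional parts of greedy expansions and then use the Pisot property to confine those fractional parts to a finite set.

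\textbf{Reduction.} Let $x,y ∈ \fin_n(β)$ with $x>y$, and let $C ≔ \lceil β\rceil$, so that every digit of a $β$-normalised sequence lies in $\{0,…,C-1\}$. Multiplying by $β^{n-1}$ shifts all digits of the expansions of $x$ and~$y$ to non-negative positions. Hence $z ≔ β^{n-1}(x± y) = ∑_{i=0}^{r} c_iβ^i$ with $c_i ∈ ℤ$ and $|c_i| ⩽ 2C$. In particular $z ∈ ℤ[β]$, and for every conjugate $α_j$ ($1⩽ j⩽ d-1$) the corresponding conjugate $z^{(j)} = ∑ c_iα_j^i$ satisfies $|z^{(j)}| ⩽ 2C/(1-\alphanorm)$. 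It therefore suffices to find an $ℓ$, independent of $x,y,n$, such that the greedy expansion $(z)_β=(e_i)_{i⩽ R}$ satisfies $e_i = 0$ for all $i ⩽ -ℓ$.

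\textbf{Fractional tails lie in a finite set.} For $k⩾ 0$ put
\begin{displaymath}
  t_k ≔ β^k\Bigl(z - ∑_{i⩾ -k} e_iβ^i\Bigr) = ∑_{i<-k} e_iβ^{i+k} ∈ [0,1).
\end{displaymath}
Since only non-negative powers of $β$ appear in $β^k z$ and in $β^{k+i}$ for $i⩾ -k$, we have $t_k ∈ ℤ[β]$. Its $j$-th conjugate is $α_j^k z^{(j)} - ∑_{i⩾ -k} e_iα_j^{i+k}$, which is bounded by $2C/(1-\alphanorm) + C/(1-\alphanorm)$, uniformly in $k,x,y,n$. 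Moreover, $ℤ[β]$ embeds as a full lattice in $ℝ×ℂ^k×ℝ^ℓ$ via its conjugates (the Vandermonde argument of Lemma~\ref{lem:boundedset}, which for this purpose needs no unimodularity because only non-negative powers occur). A bounded region of that space meets the lattice in finitely many points. Hence every $t_k$ lies in a fixed finite set $F ⊂ ℤ[β]∩[0,1)$ depending only on~$β$.

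\textbf{Finiteness and the bound $ℓ$.} The greedy algorithm on $[0,1)$ is the dynamics $t ↦ T_β(t)$, and $t_{k+1}=T_β(t_k)$. So the sequence $(t_k)_{k⩾0}$ is an orbit of $T_β$ inside~$F$, and it is therefore eventually periodic. If the orbit hits $0$, it does so within $|F|$ steps, and then $e_i=0$ for $i⩽ -|F|-1$. This gives $ℓ ≔ |F|+1$, and $x± y ∈ \fin_{n+ℓ}(β)$.

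\textbf{Main obstacle.} The remaining point is to exclude a nonzero periodic cycle of $T_β$ inside~$F$ reached from $t_0$, because such a cycle would give $x± y$ an infinite expansion. I expect this to be the hard step. The cycle is where membership of $x± y$ in $\fin(β)$ must actually be established, rather than assumed. My first approach would exploit the freedom to rescale. Replace $z$ by $β^m z$ for large $m$, which only shifts the expansion, and compare its greedy expansion with the expansion obtained by adding the two finite expansions digit-wise and carrying using the finite relation $d_β(1)$ available from Lemma~\ref{lem:dbetafinite}-type arguments. The hope is that normalisation of a finite digit string with bounded digits can be performed by a finite transducer, namely the normalisation automaton underlying Theorem~\ref{thm:regularity-0}, whose output on an input followed by $0^\infty$ must stabilise to $0$. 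If that stabilisation argument fails for general Pisot $β$, the fallback is to isolate exactly this step as the place where an extra hypothesis (Condition~F) enters.
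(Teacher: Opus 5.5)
\textbf{The step you flag as the main obstacle cannot be closed, because the statement as written is false for a general Pisot number.}

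Take $\beta=(3+\sqrt{5})/2$, the Pisot root of $X^2-3X+1$, with $x=\beta$ and $y=2$. Both have finite greedy expansions, namely $10$ and $2$, with no digits to the right of the radix point. So $x,y\in\fin_n(\beta)$ for every $n\geq 1$, and $x>y$. But $x-y=\beta-2=(\sqrt{5}-1)/2$ satisfies $\beta(\beta-2)=\beta-1=1+(\beta-2)$. Thus $\beta-2$ is a fixed point of $T_\beta$ with digit $1$, its expansion is $0.111\cdots$, and $x-y$ lies in no $\fin_{n+\ell}(\beta)$.

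In your own notation, $t_0=\beta-2$ is a nonzero periodic point of $T_\beta$ inside $F$. This is exactly the cycle you hoped to exclude. Consequently no transducer or stabilisation argument can succeed. Its proposed input is also unavailable: Lemma~\ref{lem:dbetafinite} assumes that $U$ preserves zeros, and here $d_\beta(1)=2111\cdots$ is infinite.

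What is missing is a hypothesis, not an argument. You need either $x\pm y\in\fin(\beta)$, or that $\beta$ satisfies Condition~F. Condition~F is the only setting in which the paper uses this result, namely Proposition~\ref{pro:bnorm2unorm}. There $x\pm y\in\mathbb{Z}[1/\beta]\cap[0,\infty)\subset\fin(\beta)$ by Lemma~\ref{lem:def-F}.

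With that hypothesis, your first three steps already form a complete and correct proof. This is the standard finiteness-of-remainders argument. The $t_k$ lie in $\mathbb{Z}[\beta]\cap[0,1)$ with all conjugates uniformly bounded, so they lie in a finite set $F$ depending only on $\beta$. Finiteness of the expansion means the $T_\beta$-orbit of $t_0$ reaches $0$. It must then do so within $|F|$ steps, since the earlier nonzero values are distinct. This gives $\ell=|F|+1$, up to the index shift by $n-1$.

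The paper gives no proof of its own here; it only cites Frougny--Solomyak. So the right course is your ``fallback'': state the proposition with the added hypothesis and prove it as above, rather than attempt the last step.
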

It follows easily from this result that if the numbers $x_1,…,x_k$ belong
to~$\fin_n(β)$, then  any of the sums $x_1 \pm ⋯ \pm x_k$  belong to
$\fin_{n+⌈\log_2k⌉ℓ}$.

The following proposition states that the preservation of zeros is implied
by the Condition~F.  It follows that the preservation of zeros by
$U = (u_n)_{n⩾ 0}$ only depends on the recurrence and not on the initial
conditions defining~$U$.
\begin{proposition}\label{pro:bnorm2unorm}
  Let $U = (u_n)_{n ⩾ 0}$ be a Pisot numeration associated to~$β$.  If $β$
  satisfies the Condition~F, then~$U$ preserves zeros.
\end{proposition}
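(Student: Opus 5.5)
We must show: for each $c$ there is $K_c$ with $\uval(g) ⩾ \ord(g) - K_c$ whenever $\|g\|_∞ ⩽ c$. The plan is to pass through the $β$-numeration. Write $g = σ^{n}(g_0)$ with $n = \ord(g)$, and compare the $U$-normalisation of $g$ with the $β$-normalisation of $g$. Far from the radix point, Lemma~\ref{lem:unormbnorm-take2} says these two notions of normalisation coincide. Proposition~\ref{pro:frougny-solomyak} controls how far to the right the $β$-normalisation can spread.

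\textbf{Step 1: bound the $β$-side spread.} We may assume $[g]_U ⩾ 0$; otherwise replace $g$ by $-g$, which changes neither side. Apply Lemma~\ref{lem:decompsum} to $g$; if $\ord(g)$ is small, the claim is trivial after enlarging $K_c$. This gives $g = ∑_{j ⩽ k} (n_j)_U - ∑_{j⩽ k}(m_j)_U$ with $k = k(c)$. Padding with zeros and shifting, I would rather decompose the shifted sequence $σ^{-n}(g)$, whose order is $0$, into $2k$ sequences, each with $\ord ⩾ K$ and $\{0,1\}$-valued with widely spaced ones. By the choice of $K$ in Lemma~\ref{lem:spacedexpand} (enlarged if needed), these are $β$-normalised, hence lie in $\fin_0(β)$. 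Proposition~\ref{pro:frougny-solomyak} (iterated, as remarked after it) then gives $\ord(\nor_β(σ^{-n} g)) ⩾ -ℓ\lceil\log_2 2k\rceil$. Shifting back, $\ord(\nor_β(g)) ⩾ n - C$ with $C = C(c)$ independent of $g$. Condition~F guarantees that this $β$-expansion is finite.

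\textbf{Step 2: transfer back to $U$.} Let $h = \nor_β(g)$. If $n - C ⩾ N$, where $N$ is from Lemma~\ref{lem:unormbnorm-take2}, then $h$ has $\ord(h) ⩾ N$ and is $β$-normalised, hence $U$-normalised. It remains to check $[h]_U = [g]_U$. Since $[h]_β = [g]_β$ and $\ord(h-g) ⩾ n - C$, the polynomial $P_{h-g}$ vanishes at $β$, so the minimal polynomial $P$ divides it. Because $U$ satisfies the recurrence with polynomial $P$ from index $0$ on, any sequence $w$ with $\ord(w) ⩾ d$ and $P \mid P_w$ has $[w]_U = 0$. This is the step where the needed order must exceed the degree, which we can arrange by enlarging $N$. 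Hence $\unor(g) = h$, and $\uval(g) = \ord(h) ⩾ n - C$. If instead $n < N + C$, the inequality holds trivially with $K_c = N + C$. So $K_c ≔ N + C + d$ works.

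\textbf{Main obstacle.} The delicate point is Step 1: making the Frougny–Solomyak bound uniform in $g$. The danger is that $\ord$ of the pieces must be measured relative to $n$, not the radix point. I would handle this by shifting everything by $σ^{-n}$ before applying the proposition, and by using the fact that $\fin_n$ behaves equivariantly under shifting. A second point to check is that $[g]_β ⩾ 0$ follows from $[g]_U ⩾ 0$, or else that we can handle the signs of $[g]_β$ and $[g]_U$ consistently. Lemma~\ref{lem:dividesPw}-type reasoning shows these signs agree once $\ord$ is large, and the case $[g]_β = 0$ gives $[g]_U = 0$ directly.
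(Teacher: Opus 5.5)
Your proposal is correct and follows essentially the same route as the paper. You decompose $g$ via Lemma~\ref{lem:decompsum}, bound the spread of the $\beta$-normalisation with the iterated Frougny--Solomyak estimate (Proposition~\ref{pro:frougny-solomyak}), and transfer back to $U$ with Lemma~\ref{lem:unormbnorm-take2}. You also make explicit two steps the paper leaves tacit: $[\nor_\beta(g)]_U=[g]_U$ via $P\mid P_{h-g}$, and the sign bookkeeping. The only slip is that the pieces of $\sigma^{-n}(g)$ have order $\geqslant 0$ rather than $\geqslant K$. This is harmless, since widely spaced $\{0,1\}$-sequences are $\beta$-normalised wherever they sit.
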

\begin{proof}
  Suppose that the Pisot number~$β$ satisfies the Condition~F, i.e.,
  satisfies $ℤ[β] ⊂ \fin(β)$, and let $c$ be a given integer.  Since $1$
  belongs to~$ℤ[β]$, it has a finite expansion $d_β(1) = t_1 ⋯ t_m$.  By
  Lemma~\ref{lem:decompsum}, there exists an integer~$k$ such that each
  finite sequence~$g$ with $\|g\|_∞ ⩽ c$ can be decomposed as a sum of
  finite sequences $g ∈ g_1 \pm ⋯ \pm g_k$ where each ~$g_i=(n_i)_U$ for
  some integer~$n_i$.  Furthermore it can be assumed that
  $\ord(g_i) ⩾ \ord(g)$.  Let $N$ be the integer provided by
  Lemma~\ref{lem:unormbnorm-take2}, and let $ℓ$ be the integer from
  Proposition~\ref{pro:frougny-solomyak}.  Let us suppose that
  $\ord(g) ⩾ N + ⌈\log_2 k⌉ℓ$.  Then there exists a $β$-normalised finite
  sequence~$g'$ such that $[g]_β = [g']_β$ and $\ord(g') ⩾ N$.  By
  Lemma~\ref{lem:unormbnorm-take2}, the finite sequence~$g'$ is also
  $U$-normalised and $[g]_U = [g']_U$.
\end{proof}

\begin{example}
  The classic Rauzy fractal is generated by the Tribonacci numeration, seen
  in Figure~\ref{fig:Tribonacci}. It is generated by taking the image under
  $\rval$ of the normalised expansions in $ℤ_U$, namely of
  several natural numbers for the standard initial conditions.
\begin{figure}[htbp]
  \centering
  \begin{subfigure}{0.3\textwidth}
    \centering
    \includegraphics[width=\linewidth]{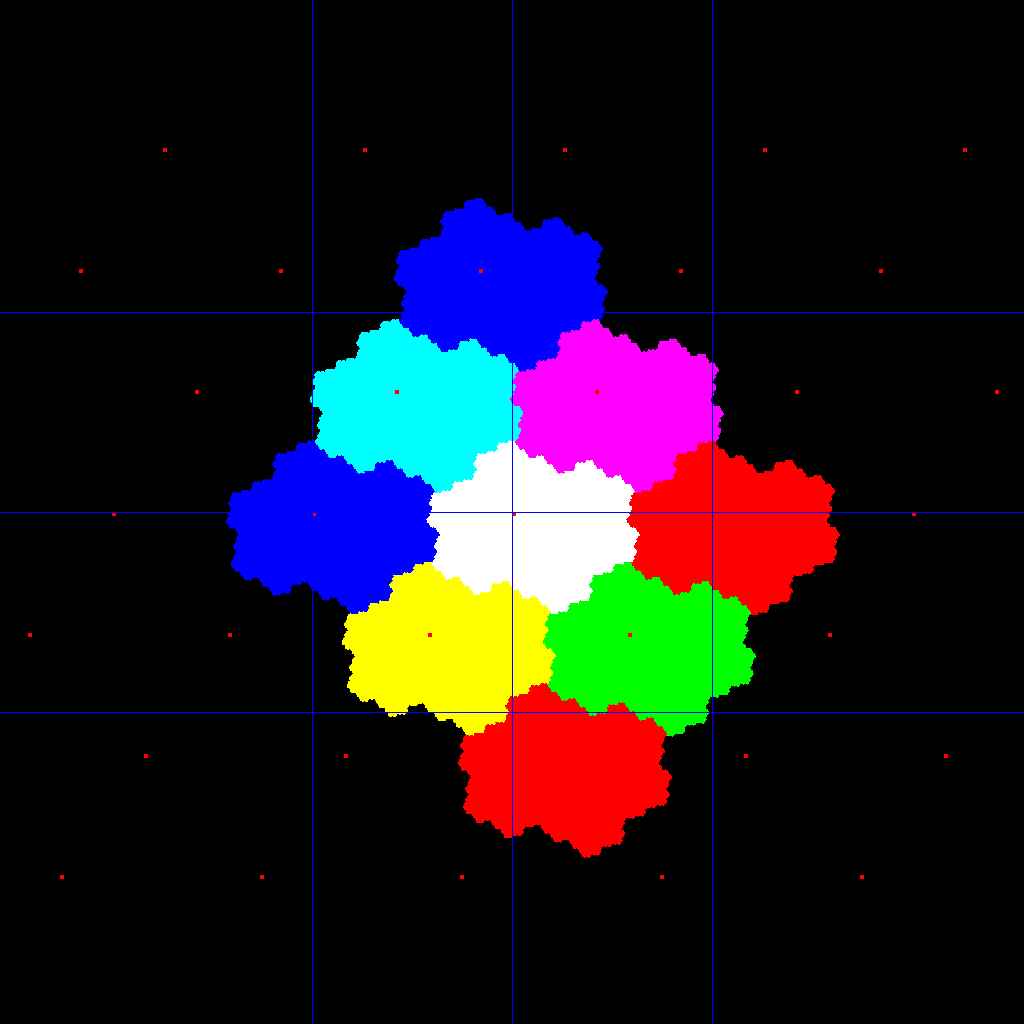}
  \end{subfigure}
  \hfill
  \caption{One tile is the image of  the normalised representatives
    of the group~$ℤ_U$ under~$\rval$ for the standard
    Tribonacci numeration. The red dots are the lattice~$\mathbb{L}$.}
  \label{fig:Tribonacci}
\end{figure}
\end{example}

\begin{example}\label{ex:initial-conditions}
  Consider the numeration generated by $P(X)=X^3-X^2 - 2X-1$.
  By~\cite[Theorem 3]{Akiyama-1998-cubic}, the Pisot root~$β$
  satisfies Condition~F, so by Proposition~\ref{pro:bnorm2unorm}, $U$
  preserves zeros. In Figure~\ref{fig:standard} we see how nonstandard
  initial conditions affect the tiling properties of~$ℂ$. If they
  are too small, we do not have a cover of~$ℂ$, and if they are
  too large, the tiles overlap.
  \begin{figure}[htbp]
  \centering
  \hfill
  \begin{subfigure}{0.3\textwidth}
    \centering
    \includegraphics[width=\linewidth]{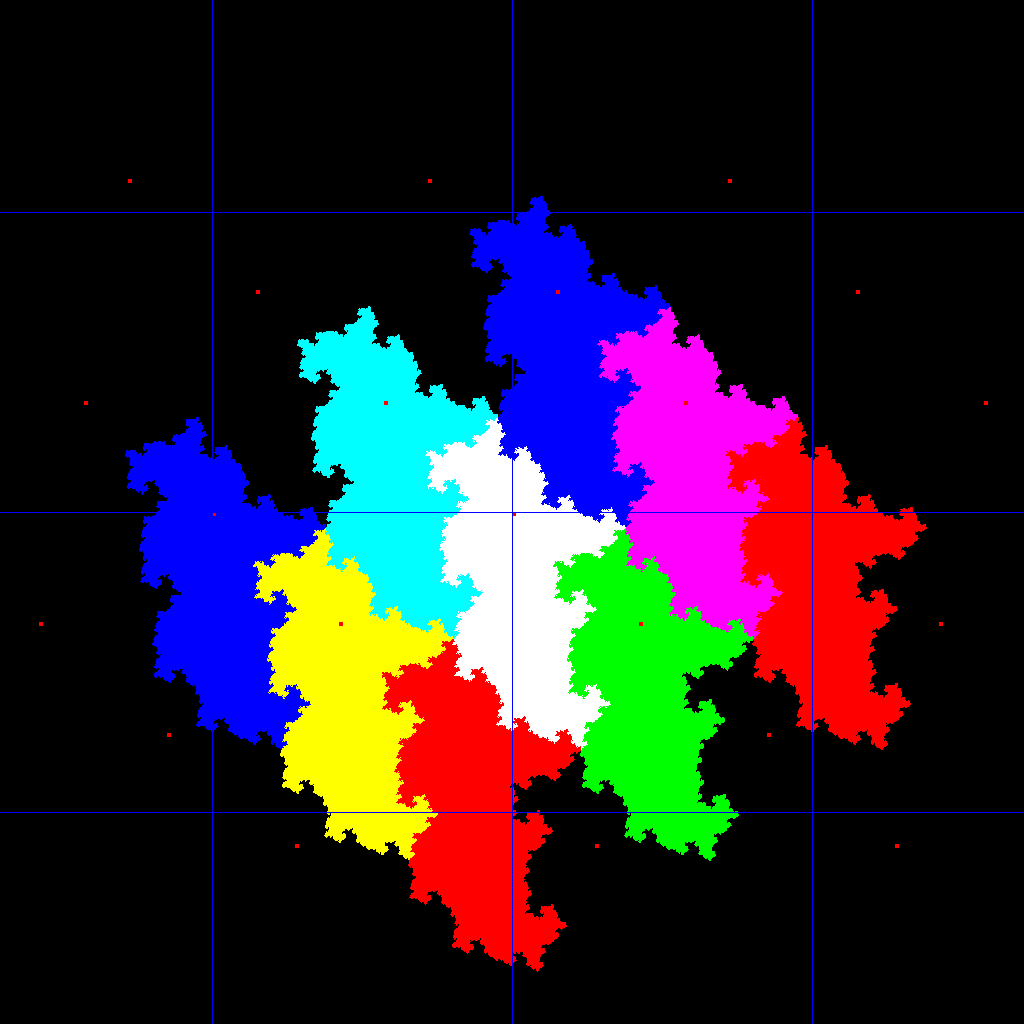}
  \end{subfigure}\hfill
  \begin{subfigure}{0.3\textwidth}
    \centering
    \includegraphics[width=\linewidth]{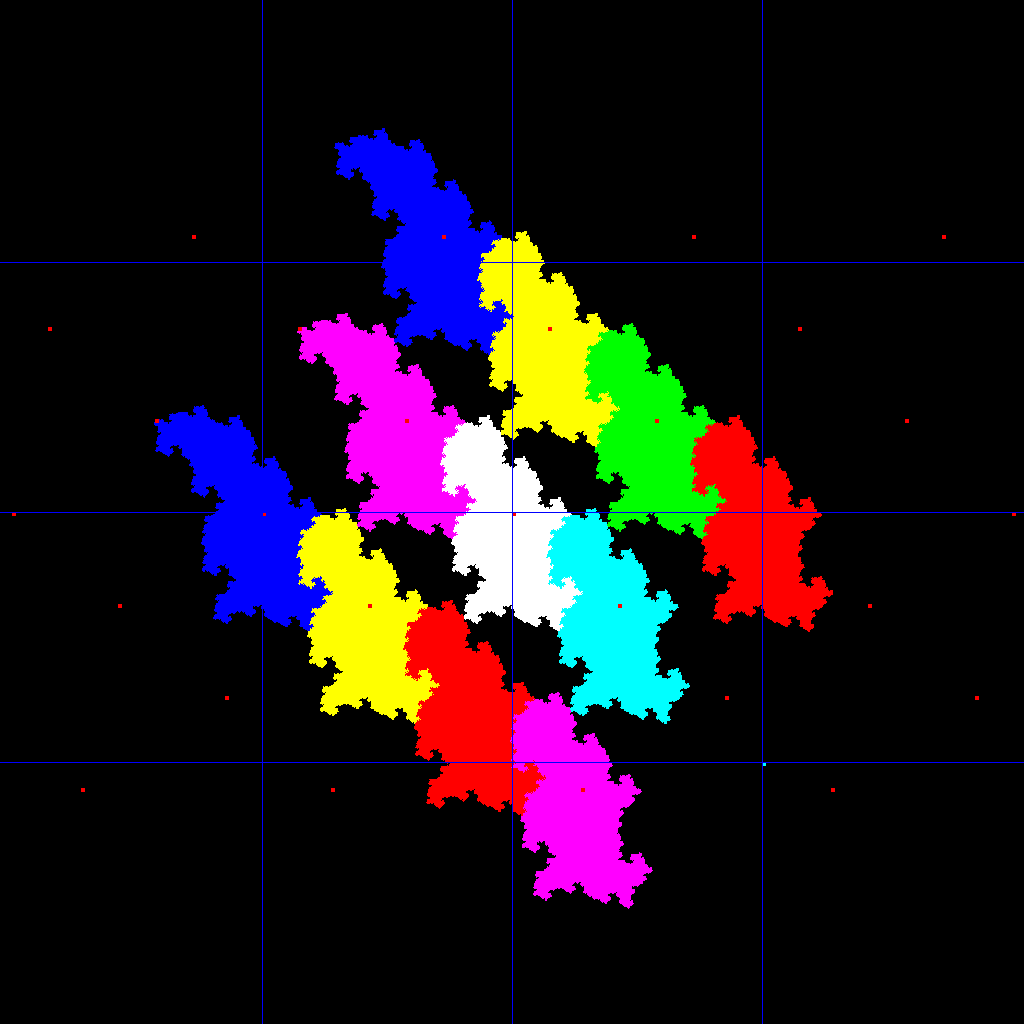}
  \end{subfigure}
  \hfill
  \begin{subfigure}{0.3\textwidth}
    \centering
    \includegraphics[width=\linewidth]{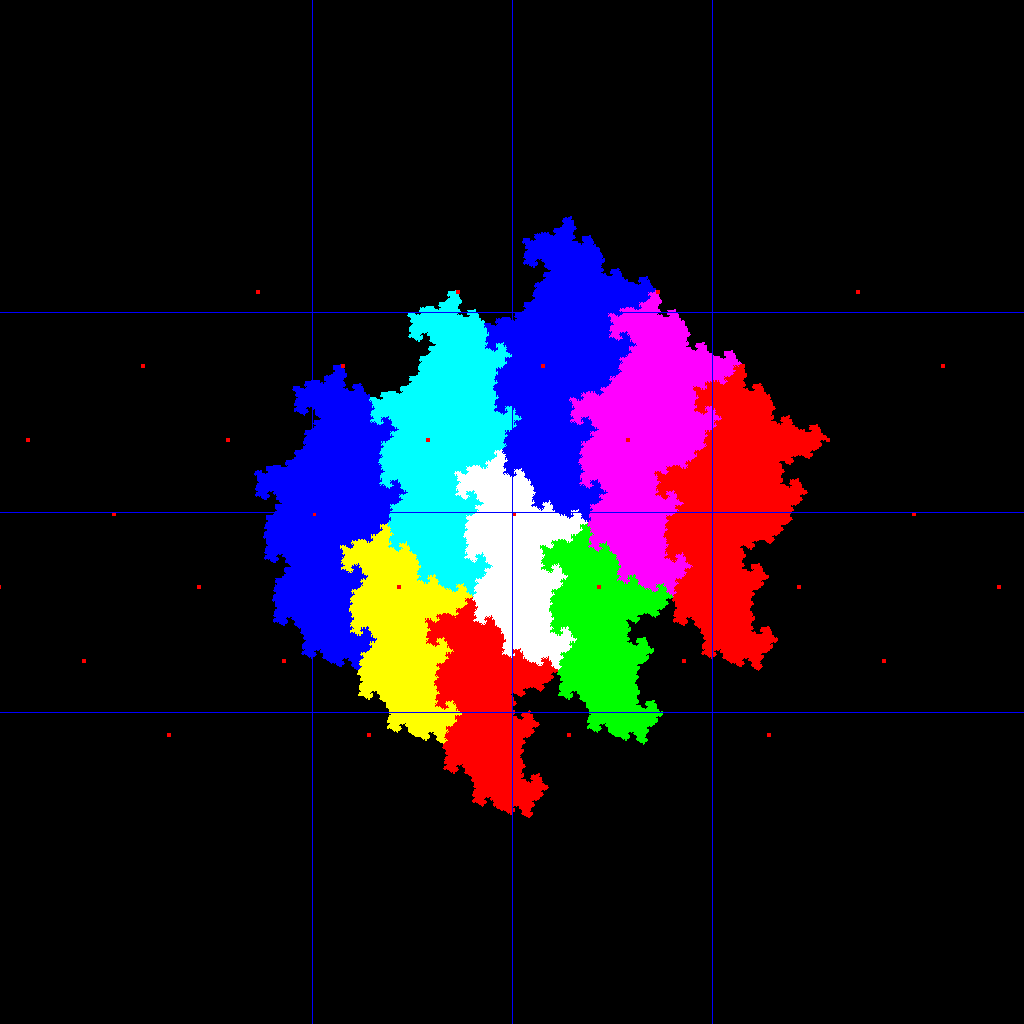}
  \end{subfigure}
  \caption{One tile in the left figure is the image of the normalised
    representatives of the group~$ℤ_U$ under~$\rval$ for the standard
    numeration given by $P(X)=X^3-X^2 - 2 X -1 $.  In the middle figure, a
    tile is the image of~$ℤ_U$ where $U$ is generated with non standard
    initial conditions $u_2 = 2$, $u_{1} = 1$, $u_{0} = 1$, on the right,
    with non-standard initial conditions $u_2 = 5$, $u_{1} = 2$, $u_{0} = 1$.}
  \label{fig:standard}
\end{figure}
\end{example}

\bibliographystyle{amsalpha}
\bibliography{fibonadic}

\end{document}